\documentclass[a4paper,12pt]{article}

\textheight 220 true mm
\textwidth 160 true mm
\topmargin -5 true mm
\oddsidemargin 0 mm
\evensidemargin 0 mm

\usepackage{amsmath,amssymb,amsfonts,amsthm}
\usepackage[authoryear]{natbib}
\usepackage{graphicx}
\usepackage{float}
\usepackage{url}

\theoremstyle{plain}
\newtheorem{thm}{Theorem}[section]
\newtheorem{lem}[thm]{Lemma}
\newtheorem{prop}[thm]{Proposition}
\newtheorem{rem}[thm]{Remark}
\newtheorem{assumption}[thm]{Assumption}

\newcommand{\C}{\mathbb{C}}
\newcommand{\R}{\mathbb{R}}
\renewcommand{\S}{\mathbb{S}}
\newcommand{\w}{\omega}

\newcommand{\dd}{\mathrm{d}}

\newcommand{\len}{\mathrm{len}}

\newcommand{\Var}{\mathrm{Var}}
\newcommand{\Vol}{\mathrm{Vol}}

\begin{document}

\title{Optimal experimental design that minimizes the width of simultaneous conf
idence bands}

\author{Satoshi Kuriki\thanks{The Institute of Statistical Mathematics, 10-3 Mid
oricho, Tachikawa, Tokyo 190-8562, Japan, Email: {\tt kuriki@ism.ac.jp}}
 \ \ and\ \ %
Henry P. Wynn\thanks{The London School of Economics and Political Science, 
Houghton Street, London WC2A 2AE, UK, Email: {\tt h.wynn@lse.ac.uk}}}

\date{}

\maketitle

\begin{abstract}
We propose an optimal experimental design for a curvilinear regression model that minimizes the band-width of simultaneous confidence bands.
Simultaneous confidence bands for curvilinear regression are constructed by evaluating the volume of a tube about a curve that is defined as a trajectory of a regression basis vector (Naiman, 1986). %\citep{Naiman86}.
The proposed criterion is constructed based on the volume of a tube, and the corresponding optimal design that minimizes the volume of tube is referred to as the tube-volume optimal (TV-optimal) design.
For Fourier and weighted polynomial regressions, the problem is formalized as one of minimization over the cone of Hankel positive definite matrices, and the criterion to minimize is expressed as an elliptic integral.
We show that the M\"obius group keeps our problem invariant, and hence, minimization can be conducted over cross-sections of orbits.
We demonstrate that for the weighted polynomial regression and the Fourier regression with three bases, the tube-volume optimal design forms an orbit of the M\"obius group containing D-optimal designs as representative elements.

\smallskip\noindent
{\it Key words\/}:
D-optimality,
Fourier regression,
Hankel matrix,
M\"obius group,
volume-of-tube method,
weighted polynomial regression.
\end{abstract}

\section{Introduction}
\label{sec:introduction}

Suppose that we observe pairs of explanatory variables $x_i\in\mathcal{X}$ and response variables $y_i\in\R$, $i=1,\ldots,N$.
Here, $\mathcal{X}\subset\R$ is a domain of explanatory variables, and typically, a segment of $\mathbb{R}$.
For such data, we consider the regression model
\[
 y_i = b^\top f(x_i) + \varepsilon_i,\quad \varepsilon_i \sim N\bigl(0,\sigma^2(x)\bigr)\ \ \mbox{i.i.d.},
\]
where $b=(b_1,\ldots,b_n)^\top$ is an unknown coefficient vector, and
$f(x)=(f_1(x),\ldots,f_n(x))^\top$, $x\in\mathcal{X}$, is a piecewise smooth regression basis vector.
For the problem of this paper, we assume that the variance function $\sigma^2(x)>0$ is known.
When $\sigma^2(x)$ is not a constant, the regression model is called {\it weighted\/}.

For experimental design, it is assumed that the explanatory variables $x_i$ can be chosen arbitrarily within its domain $\mathcal{X}\subset\mathbb{R}$.
The allocation of $\{x_1,\ldots,x_N\}\subset\mathcal{X}$ to optimize some target function is called optimal experimental design.
For example, for D-optimality, we take a function $\det(\Sigma)$ with $\Sigma=\Var(\widehat b)$, where $\widehat b$ is the ordinary least square (OLS) estimator of $b$.
Here,
\[
 \Sigma = M^{-1}\quad\mbox{with}\quad M=\sum_{i=1}^N f(x_i) f(x_i)^\top \frac{1}{\sigma^2(x_i)},
\]
the information matrix.

Following \cite{Kiefer-Wolfowitz59}, in an optimal design, allocation $\{x_1,\ldots,x_N\}$ is regarded as the probability measure over $\mathcal{X}$ with mass $p_i=1/N$ at each point $x_i$.
We write this discrete probability measure as
\[
 \left\{\begin{matrix} x_i \\ p_i \end{matrix}\right\}_{1\le i\le N} =
 \left\{\begin{matrix} x_1 & \cdots & x_N \\ \frac{1}{N} & \cdots & \frac{1}{N} \end{matrix}\right\}.
\]
Viewed from this point, the problem is formalized as that of optimization with respect to the probability measure over $\mathcal{X}$.
Most criteria in the literature including the criterion mentioned above are convex or concave functionals of the probability measure, and can be considered in the framework of convex analysis \citep{Wynn85,Pukelsheim06}.

In this paper, we propose a new non-convex criterion based on simultaneous confidence bands.
The pointwise confidence band is based on the confidence region for regressor $b^\top f(x)$ at a fixed point $x$.
On the other hand, the simultaneous confidence band is the confidence region for the full regression curve $\{(x,b^\top f(x)) \mid x\in\mathcal{X}\} \subset\R^{2}$.
The standard form of the simultaneous confidence band of hyperbolic-type is of the form
\begin{equation}
\label{hyperbolic}
 b^\top f(x) \in \widehat b^\top f(x) \pm c_\alpha \sqrt{f(x)^\top\Sigma f(x)},
\end{equation}
where $u\pm v$ stands for the region $(u-v,u+v)$.
The threshold $c_\alpha$ is determined so that the event (\ref{hyperbolic}) holds for all $x\in\mathcal{X}$ with given probability $1-\alpha$
\citep{Working-Hotelling29,Scheffe59,Wynn-Bloomfield71,Liu10}.
The simultaneous confidence bands are useful when $x$ cannot be determined in advance.
(See \cite{vanDyk14} for an application in experimental particle physics.)
As shown in the next section, $c_\alpha$ is also a functional of the allocation $\{x_1,\ldots,x_N\}$, and hence, we can consider an optimal design that in some way minimizes both the threshold $c_\alpha$ and $f(x)^\top\Sigma f(x)$.
In fact, from the general equivalence theorem of \cite{Kiefer-Wolfowitz59}, the design measure that minimizes $\max_{x\in\mathcal{X}}f(x)^\top\Sigma f(x)$
coincides with the D-optimal design.
Therefore, we propose the use of $c_\alpha$ as a criterion of optimal design, and consider the corresponding optimal design as the tube-volume optimal (TV-optimal) design.
If a design is optimal under both the tube-volume criterion and the D-criterion, it becomes the universal optimal design to minimize the width of confidence bands (\ref{hyperbolic}).

From its definition, $c_\alpha$ is a complicated function of $\Sigma$.
However, when $\alpha$ is small, $c_\alpha$ tends to a simpler function.
This approximation is due to the volume-of-tube method used to construct simultaneous confidence bands in curvilinear regression curves
\citep{Naiman86,Johansen-Johnstone90,Sun-Loader94,Lu-Kuriki17}.
The volume-of-tube method is a methodology to approximate the probability of the maximum of a Gaussian random field
\citep{Sun93,Kuriki-Takemura01,Kuriki-Takemura09,Takemura-Kuriki02,Adler-Taylor07}.
As shown later, $c_\alpha$ corresponds to the upper tail probability of the maximum of a Gaussian field, and hence, the volume-of-tube method works well.

As concrete regression models, weighted polynomial and Fourier regressions are mainly covered here.
In these models, we will see that there is a group referred to as the M\"obius transform that keeps the tube-volume optimal design problem invariant.
In general, a group action simplifies problems.
(See Section 13 of \cite{Pukelsheim06} for invariant optimal experimental design.)
The use of such group invariance is another subject of this paper.

The optimal design problem focusing on the width of the simultaneous confidence bands can be formalized in a different way.
When comparing two estimated regression curves, \cite{Dette-Schorning16} and \cite{Dette-etal17} proposed to minimize the $L_p$- or $L_\infty$-norm of the variance function of the estimator of the difference between the two curves. 
They demonstrated that their proposal reduces the width substantially compared with the pair of optimized designs for individual regression models. 
Different from them, our objective function is the width $c_\alpha$ of the simultaneous confidence band standardized by standard deviation.

The outline of this paper is as follows.
Section \ref{sec:min-vol-design} summarizes the volume-of-tube formula to construct approximate simultaneous confidence bands,
and formalizes the tube-volume criterion and the corresponding optimal design.
Section \ref{sec:fourier} analyzes the tube-volume optimal designs for Fourier and weighted polynomial regressions.
The M\"obius group is proved to keep the optimization problem invariant, and hence can be used to reduce the dimension of the problem.
Using this consideration, Section \ref{sec:n=3} identifies the tube-volume optimal design in the weighted polynomial regression and the Fourier regression when $n=3$.
Some proofs are given in Appendix.

\section{Tube-volume optimal design}
\label{sec:min-vol-design}

\subsection{Volume-of-tube formula for simultaneous confidence bands}
\label{subsec:volume}

In this subsection, we briefly summarize the volume-of-tube method.
This is a general methodology used to approximate the probability of the maximum of a smooth Gaussian random process or random field.
Here, we describe how this method is used to determine threshold $c_\alpha$.

As mentioned in Section \ref{sec:introduction}, threshold $c_\alpha$ should be determined as a solution $c=c_\alpha$ of
\begin{align}
 \Pr\biggl(\,\frac{|\,\widehat b^\top f(x)-b^\top f(x)|}
                  {\sqrt{f(x)^\top\Sigma f(x)}} <c,\,\forall x\in\mathcal{X}\biggr)
&= 1-
 \Pr\biggl(\max_{x\in\mathcal{X}}
 \frac{|\,(\widehat b-b)^\top f(x)|}{\Vert\Sigma^{\frac{1}{2}}f(x)\Vert}
 > c\biggr) \nonumber \\
&= 1-\alpha,
\label{calpha2}
\end{align}
where
$\Sigma^{\frac{1}{2}}$ is a symmetric square-root matrix such that
$(\Sigma^{\frac{1}{2}})^2=\Sigma$.

We define the normalized basis vector and its trajectory as
\begin{equation}
\label{psi-gamma}
 \psi_\Sigma(x) = \frac{\Sigma^{\frac{1}{2}}f(x)}{\Vert\Sigma^{\frac{1}{2}} f(x)\Vert}, \qquad
 \gamma_\Sigma = \bigl\{ \pm\psi_\Sigma(x) \mid x\in \mathcal{X} \bigr\},
\end{equation}
respectively.
From this definition, the trajectory is a subset of the ($n-1$)-dimensional unit sphere:
\[
 \gamma_\Sigma\subset\S^{n-1} = \{ u\in\R^n \mid \Vert u\Vert =1 \}.
\]
In particular, when $\mathcal{X}$ is a segment, $\gamma_\Sigma$ is a curve on the unit sphere.
Let $\Vol_1(\cdot)$ denote the one-dimensional volume, that is, the length.
Then, when $c$ is large, the volume-of-tube method provides an approximation to the upper tail probability of the maximum in (\ref{calpha2}).
Further, let $\chi^2_\nu$ denote the chi-square random variable with $\nu$ degrees of freedom.
\begin{prop}
\label{prop:naiman}
(i) As $c\to\infty$,
\begin{equation}
\label{tail}
 \Pr\biggl(\max_{x\in\mathcal{X}}\frac{|\,(\widehat b-b)^\top f(x)|}
              {\Vert \Sigma^{\frac{1}{2}}f(x)\Vert} >c\biggr) \sim
 \frac{\Vol_1(\gamma_\Sigma)}{2\pi} \Pr\bigl(\chi^2_{2}>c^2\bigr).
\end{equation}
(ii) For all $c>0$, the left-hand side of (\ref{tail}) is bounded above by
\begin{equation}
 \frac{\Vol_1(\gamma_\Sigma)}{2\pi} \Pr\bigl(\chi^2_2>c^2\bigr)
 + \chi(\gamma_\Sigma)\Pr\bigl(\chi^2_1>c^2\bigr),
\label{naiman}
\end{equation}
where $\chi(\gamma_\Sigma)$ is the number of the connected components of the set $\gamma_\Sigma\,(\subset\S^{n-1})$
provided that any connected component of $\gamma_\Sigma$ is not a closed curve.
\end{prop}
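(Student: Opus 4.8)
The plan is to strip the statement down to a purely geometric tail and then feed it into Weyl's volume-of-tube formula. First I would set $\xi=\Sigma^{-1/2}(\widehat b-b)$, so that $\xi\sim N(0,I_n)$ and, by the definitions in (\ref{psi-gamma}),
\[
 \max_{x\in\mathcal{X}}\frac{|(\widehat b-b)^\top f(x)|}{\Vert\Sigma^{\frac12}f(x)\Vert}
 =\max_{x\in\mathcal{X}}|\xi^\top\psi_\Sigma(x)|=\max_{u\in\gamma_\Sigma}\xi^\top u,
\]
where the central symmetry of $\gamma_\Sigma$ converts the absolute value into an unsigned maximum. I would then decompose $\xi=R\Theta$ with $R=\Vert\xi\Vert$ (so $R^2\sim\chi^2_n$) and $\Theta$ uniform on $\mathbb{S}^{n-1}$, independent of $R$. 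The event $\{\max_u\xi^\top u>c\}$ becomes $\{R\cos\rho(\Theta)>c\}$, where $\rho(\theta)=\mathrm{dist}_{\mathbb{S}^{n-1}}(\theta,\gamma_\Sigma)\in[0,\pi/2]$ is the geodesic distance to the curve. Conditioning on $R=r$ expresses the probability as a radial integral of the normalized surface area of the spherical tube $T(\theta_0)=\{\theta:\rho(\theta)\le\theta_0\}$ of radius $\theta_0=\arccos(c/r)$.

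For part (i), I would insert Weyl's tube formula for a curve in $\mathbb{S}^{n-1}$: near the curve the tube is cylindrical and its $(n-1)$-volume grows like $\Vol_1(\gamma_\Sigma)\,\Omega_{n-3}\int_0^{\theta_0}(\sin\phi)^{n-3}\dd\phi$, where $\Omega_{n-3}$ is the surface area of $\mathbb{S}^{n-3}$. Substituting this into the radial integral and extracting the leading behaviour as $c\to\infty$ by Laplace's method — the integrand concentrates at $\phi=0$ because $c/\cos\phi$ is minimized there — collapses all constants, using the identity $\Omega_{n-3}\,\Gamma(\tfrac{n-2}{2})=2\pi^{(n-2)/2}$, to exactly $\frac{\Vol_1(\gamma_\Sigma)}{2\pi}e^{-c^2/2}$, which is $\frac{\Vol_1(\gamma_\Sigma)}{2\pi}\Pr(\chi^2_2>c^2)$.

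For part (ii), I would exploit that the radial identity above is \emph{exact} once $\theta_0$ lies below the critical radius of the tube (no self-overlap), while for larger $\theta_0$ the true tube area is at most the value given by the same expression, since overlaps can only reduce the area of a union. The cylindrical part integrates, as in (i) but now as an inequality valid for all $c$, to the $\frac{\Vol_1(\gamma_\Sigma)}{2\pi}\Pr(\chi^2_2>c^2)$ term. The boundary of $\gamma_\Sigma$ supplies the rest: each of its $2\chi(\gamma_\Sigma)$ endpoints carries a geodesic end-cap, and bounding each cap crudely by the half-space $\{\xi:\langle\xi,p\rangle>c\}$ at the endpoint $p$, whose Gaussian measure is $\Pr(\langle\xi,p\rangle>c)=\tfrac12\Pr(\chi^2_1>c^2)$, gives $2\chi(\gamma_\Sigma)\cdot\tfrac12\Pr(\chi^2_1>c^2)=\chi(\gamma_\Sigma)\Pr(\chi^2_1>c^2)$. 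The hypothesis that no connected component is a closed curve is exactly what guarantees two endpoints per component, so this count is correct.

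The hard part is the global validity asserted in (ii): showing rigorously that beyond the critical radius the naive Weyl expression still dominates the true union measure for all $c>0$. This is the substance of Naiman's inequality and, unlike the local differential-geometric computation that suffices for (i), it needs a covering/multiplicity argument — that the normal-exponential map onto the tube is surjective with multiplicity at least one, so the additive expression over-counts rather than under-counts. A secondary subtlety is the bookkeeping of the end-caps needed to land on the constant $\chi(\gamma_\Sigma)$ precisely rather than merely up to a factor; this is where the central symmetry of $\gamma_\Sigma$ and the no-closed-component hypothesis must be used carefully to match endpoints with connected components.
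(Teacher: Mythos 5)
Your reduction --- the polar decomposition $\xi=R\Theta$, rewriting the event as a spherical-tube event for $\Theta$, and radial mixing to turn cap measures into $\chi^2$ tails --- is sound, and it is in fact the same skeleton the paper uses in Appendix A.1 to prove the more general Proposition 2.2 (the paper never proves Proposition 2.1 itself: part (ii) is cited to Naiman (1986), and (i) to the volume-of-tube literature). Your part (i) is the standard Hotelling--Weyl asymptotic and is acceptable in outline: below the critical radius the tube volume is exactly $\Vol_1(\gamma_\Sigma)\,\Omega_{n-3}\int_0^{\theta_0}(\sin\phi)^{n-3}\cos\phi\,\dd\phi$ plus boundary terms (your formula omits the $\cos\phi$ Jacobian, which is harmless at leading order), and the region $\{\rho(\Theta)>\theta_*\}$ contributes at most $\Pr\bigl(R>c/\cos\theta_*\bigr)=o\bigl(e^{-c^2/2}\bigr)$. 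Your endpoint bookkeeping in (ii) is also correct.

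The genuine gap is the main term of part (ii), and the mechanism you propose to close it cannot work as stated. The area formula bounds the measure of the image of the normal-exponential map by the integral of the \emph{unsigned} Jacobian, which for a curve on $\S^{n-1}$ is $(\sin\phi)^{n-3}\,\bigl|\cos\phi-\langle\kappa(s),v\rangle\sin\phi\bigr|$, where $\kappa(s)$ is the geodesic curvature vector and $v$ the unit normal direction. The Weyl integrand $(\sin\phi)^{n-3}\cos\phi$ that you need is the \emph{signed} average of this over the normal sphere: the curvature terms disappear only because they are odd in $v$. Since the unsigned integral dominates the signed one (strictly, wherever $\kappa\neq 0$), the statement ``surjective with multiplicity at least one, so the additive expression over-counts'' proves only that the tube measure is at most the unsigned integral, which is \emph{larger} than the bound (\ref{naiman}); your covering argument is complete only in the special case where every component of $\gamma_\Sigma$ is a geodesic arc. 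For general curves the cancellation of curvature must be achieved globally by a signed count: Naiman's original integral-geometric argument, the proof of Johnstone and Siegmund (1989), or the Euler-characteristic bound $\Pr(A_c\neq\emptyset)\le E[\chi(A_c)]$ of Takemura and Kuriki (2002), which is valid precisely because, when no component of $\gamma_\Sigma$ is a closed curve, a nonempty excursion set satisfies $\chi(A_c)\ge 1$. That last route is the one the paper actually takes in Appendix A.1 for Proposition 2.2, of which Proposition 2.1(ii) is the Gaussian special case.
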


If we admit approximation (\ref{tail}), an approximate threshold $c_\alpha$ can be determined from the equation
\[
 \frac{\Vol_1(\gamma_\Sigma)}{2\pi} \Pr\bigl(\chi^2_{2}>c_\alpha^2\bigr) = \alpha.
\]
This means that the smaller the value of $\Vol_1(\gamma_\Sigma)$, the smaller is $c_\alpha$.

The statement (ii) above is due to \cite{Naiman86}.
Alternative proofs of the inequality can be found in \cite{Johnstone-Siegmund89} and \cite{Takemura-Kuriki02}.
See \cite{Lu-Kuriki17} for a generalization of Naiman's inequality.
By equating (\ref{naiman}) to be $\alpha$, we have a conservative threshold for the simultaneous confidence band.

The volume-of-tube method in Proposition \ref{prop:naiman} is based on the property that the normalized vector $\eta=\xi/\Vert\xi\Vert$, $\xi=\Sigma^{-\frac{1}{2}}(\widehat b-b)$, is distributed uniformly on the unit sphere $\S^{n-1}$, independently of its length $\Vert\xi\Vert$.
This occurs when the observation error $(\varepsilon_1,\ldots,\varepsilon_n)^\top$ is distributed as the elliptically contoured distribution, and hence the generalized least square estimator $\widehat b$ follows the elliptically contoured distribution as well (see, e.g., Theorem 2.6.3 of \cite{Fang-Zhang90}).
Part (ii) of Proposition \ref{prop:naiman} still holds as follows.
\begin{prop}
\label{prop:elliptically}
Suppose that $\xi=(\xi_1,\ldots,\xi_n)^\top=\Sigma^{-\frac{1}{2}}(\widehat b-b)$ is distributed according to an elliptically contoured distribution with mean zero and an identity covariance matrix.
Then, for all $c>0$,
\begin{equation}
 \Pr\biggl(\max_{x\in\mathcal{X}}\frac{|\,(\widehat b-b)^\top f(x)|}
              {\Vert \Sigma^{\frac{1}{2}}f(x)\Vert} >c\biggr) \le
 \frac{\Vol_1(\gamma_\Sigma)}{2\pi} \Pr\bigl(R_2^2>c^2\bigr)
 + \chi(\gamma_\Sigma)\Pr\bigl(R_1^2>c^2\bigr),
\label{naiman_elliptically}
\end{equation}
where $R_k^2=\sum_{i=1}^k\xi_i^2$, provided that any connected component of $\gamma_\Sigma$ is not a closed curve.
\end{prop}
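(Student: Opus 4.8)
The plan is to isolate the single place where Gaussianity enters Naiman's proof of part (ii) of Proposition~\ref{prop:naiman} and to redo the corresponding radial integration for the elliptical law. First I would recast the statistic geometrically: writing $\xi=\Sigma^{-\frac{1}{2}}(\widehat b-b)$ and using $\psi_\Sigma,\gamma_\Sigma$ from (\ref{psi-gamma}), one has $|(\widehat b-b)^\top f(x)|/\Vert\Sigma^{\frac{1}{2}}f(x)\Vert=|\xi^\top\psi_\Sigma(x)|$, so the event inside the probability becomes $\{\max_{u\in\gamma_\Sigma}\xi^\top u>c\}$, a purely geometric event about the random vector $\xi$ and the fixed set $\gamma_\Sigma\subset\S^{n-1}$. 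For an elliptically contoured $\xi$ with identity covariance we may write $\xi=R\eta$ with $R=\Vert\xi\Vert\ge 0$ and $\eta=\xi/\Vert\xi\Vert$ uniform on $\S^{n-1}$ and independent of $R$; this decomposition, highlighted just before the statement, is the only distributional input.

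Next I would dominate the probability by an expected count, which becomes distribution-free once $\eta$ is uniform. The event $\{\max_{u}\xi^\top u>c\}$ can occur only if $u\mapsto\xi^\top u$ restricted to $\gamma_\Sigma$ has an interior local maximum exceeding $c$ or takes a boundary (endpoint) value exceeding $c$; hence
\[
 \Pr\Bigl(\max_{u\in\gamma_\Sigma}\xi^\top u>c\Bigr)\le
 \mathbb{E}\bigl[\#\{\text{interior local maxima}>c\}\bigr]
 +\Pr\bigl(\text{some endpoint value}>c\bigr).
\]
The first term is the tube (length) contribution and the second is the cap (endpoint) contribution; this is Naiman's decomposition, and it is the passage to an expected count that yields an inequality rather than an identity, so no critical-radius hypothesis is needed. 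The proviso that no component of $\gamma_\Sigma$ be closed guarantees that endpoints exist and that the boundary count is the one above.

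The core is to evaluate the two contributions under the spherically symmetric law of $\xi$. At an interior arc-length point choose an orthonormal frame with $e_1=\psi_\Sigma(x^*)$ and $e_2=\psi_\Sigma'(x^*)/\Vert\psi_\Sigma'(x^*)\Vert$ (orthogonal since $\Vert\psi_\Sigma\Vert\equiv 1$). Writing the components of $\xi$ in this frame as $a_1,a_2,\dots$, a unit-speed expansion gives $\xi^\top\psi_\Sigma(x)=a_1\cos s+a_2\sin s+O(s^2)$, whose local maximal value is $\sqrt{a_1^2+a_2^2}$ and whose maximizing angle is the argument of $(a_1,a_2)$. By rotational invariance of the density this argument is uniform on $[0,2\pi)$, so an arc of length $\dd s$ captures a fraction $\dd s/(2\pi)$ of the mass on $\{a_1^2+a_2^2>c^2\}$; integrating along $\gamma_\Sigma$ produces $\frac{\Vol_1(\gamma_\Sigma)}{2\pi}\Pr(a_1^2+a_2^2>c^2)$. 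Since $a_1^2+a_2^2$ is the squared length of the projection of $\xi$ onto the plane $\mathrm{span}(e_1,e_2)$, rotational invariance equates its law with that of $R_2^2=\xi_1^2+\xi_2^2$, independently of the frame, giving $\Pr(R_2^2>c^2)$. Likewise, at an endpoint $\psi_\Sigma(x_1)=e_1$ the boundary value exceeds $c$ on a one-dimensional cross-section, whose spherically symmetric mass is a tail of the form $\Pr(R_1^2>c^2)$; assembling the endpoint contributions by connected component supplies the coefficient $\chi(\gamma_\Sigma)$. Substituting these radial tails for the two chi-square tails in (\ref{naiman}) yields (\ref{naiman_elliptically}).

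I expect the main obstacle to be the rigorous control of the two approximations suppressed in the expansion above: the curvature of $\gamma_\Sigma$, entering through the neglected $O(s^2)$ terms, and the overlap and boundary bookkeeping that converts the expected-count identity into a clean bound with exactly the coefficients $\Vol_1(\gamma_\Sigma)/(2\pi)$ and $\chi(\gamma_\Sigma)$. In the Gaussian case these are settled by Naiman's tube-volume estimate; the work here is to check that every such step uses only the uniformity of $\eta$ on the sphere together with a monotone radial weighting, so that the identical geometric bound persists and only the radial factors $\Pr(R_k^2>c^2)$ change. The most delicate point is verifying that the endpoint combinatorics assemble precisely into $\chi(\gamma_\Sigma)$ for the antipodally symmetric set $\gamma_\Sigma=\{\pm\psi_\Sigma(x)\}$, where the two-sided tail $\Pr(R_1^2>c^2)$ emerges from pairing a point with its antipode.
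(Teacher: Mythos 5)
Your high-level plan --- isolate the spherical-symmetry reduction and carry the radial factor through an otherwise distribution-free geometric bound --- is indeed the skeleton of the paper's proof. The paper, however, does no new geometric work at all: it notes that $\eta=\xi/\Vert\xi\Vert$ is uniform on $\S^{n-1}$, applies the known Euler-characteristic bound of \cite{Takemura-Kuriki02} to $\eta$, which yields beta tails $\Pr\bigl(B_{\frac{k}{2},\frac{n-k}{2}}>c^2\bigr)$, and then converts these into $\Pr(R_k^2>c^2)$ through the exact scaling identities $\Vert\widetilde\xi\Vert\,\eta \overset{d}{=}\xi$ and $\Vert\widetilde\xi\Vert^2 B_{\frac{k}{2},\frac{n-k}{2}}\overset{d}{=}R_k^2$ (checked by Mellin transforms), taking expectations over an independent copy $\widetilde\xi$. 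Your endpoint bookkeeping is correct: there are $2\chi(\gamma_\Sigma)$ endpoints, each contributing $\tfrac12\Pr(R_1^2>c^2)$ by spherical symmetry, which assembles to the second term of (\ref{naiman_elliptically}).

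The genuine gap is in your interior term. You bound the excursion probability by the expected number of interior local maxima of $h(s)=\xi^\top\psi_\Sigma(s)$ exceeding $c$, and then assert that this expectation equals $\frac{\Vol_1(\gamma_\Sigma)}{2\pi}\Pr(R_2^2>c^2)$. That identity holds only when the curve is a geodesic. In unit-speed parameterization $\psi_\Sigma''=-\psi_\Sigma+\kappa_g\nu$ with $\kappa_g$ the geodesic curvature and $\nu$ the unit normal, so at a critical point $h''=-\,\xi^\top\psi_\Sigma+\kappa_g\,\xi^\top\nu$; a Kac--Rice computation, together with Jensen's inequality $E\bigl[(a_1-\kappa_g a_3)^+ \mid a_1\bigr]\ge a_1^+$ (where $a_3=\xi^\top\nu$ has conditional mean zero by spherical symmetry), shows that the expected number of local maxima above $c$ \emph{strictly exceeds} the tube term whenever $\kappa_g\not\equiv 0$. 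The curves in this paper are not geodesics --- by Remark \ref{rem:circle}, the optimal trajectory for $n=3$ is a small circle of length $2\pi\sqrt{2/3}<2\pi$ --- so your chain replaces a quantity by something smaller, and the claimed bound does not follow. This is not a matter of "controlling the $O(s^2)$ terms": the curvature contribution has a definite unfavorable sign and does not vanish. Curvature cancels only in a \emph{signed} count (local maxima minus interior local minima, i.e., the Euler characteristic of the excursion set, which is exactly what the cited result of \cite{Takemura-Kuriki02} computes, and which is why the proviso excluding closed components is needed, a circle having $\chi=0$), or it is absent from the start if one counts \emph{up-crossings} of the level $c$, since Rice's formula involves only $(h,h')$, whose joint law is that of $(\xi_1,\xi_2)$ for every point of the curve. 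Either repair, carried out directly with $\xi$, would make your from-scratch route rigorous and would in fact bypass the paper's Mellin-transform step; the unsigned local-maxima version you propose does not.
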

The proof is given in Appendix \ref{subsec:elliptically}.
Part (i) of Proposition \ref{prop:naiman} does not hold in general.
It depends on the tail behavior of the elliptically contoured distribution.

\subsection{Tube-volume criterion}

From (\ref{naiman}), we find that the smaller the value of $\Vol_1(\gamma_\Sigma)$, the narrower is the width of the confidence band.
In this subsection, we formalize the experimental design optimization problem of the allocation of explanatory variables to minimize $\Vol_1(\gamma_\Sigma)$.

Here, we give our assumptions on $f(x)$.
\begin{assumption}
$f:\mathcal{X}\to\R^n$ is a continuous and piecewise $C^1$-function.
Image $f(\mathcal{X})$ spans $\R^n$.
\end{assumption}

From elementary geometry, the volume of $\gamma_\Sigma$ in (\ref{psi-gamma}) is given by
\begin{align}
 \Vol_1(\gamma_\Sigma)
&=
 2 \int_{\mathcal{X}} \biggl\Vert \frac{\dd \psi_\Sigma(x)}{\dd x}\biggr\Vert \, \dd x \nonumber \\
&=
 2 \int_{\mathcal{X}} \biggl\Vert \frac{\dd}{\dd x}\biggl(\frac{\Sigma^{\frac{1}{2}}f(x)}{\Vert \Sigma^{\frac{1}{2}}f(x) \Vert}\biggr) \biggr\Vert \, \dd x \nonumber \\
&=
 2 \int_{\mathcal{X}}
 \frac{\sqrt{(f(x)^\top \Sigma f(x))(g(x)^\top \Sigma g(x)) - (f(x)^\top \Sigma g(x))^2}}{f(x)^\top \Sigma f(x) }\, \dd x \nonumber \\
&=
 2 \int_{\mathcal{X}} \frac{\det\left(
 \begin{pmatrix} f(x)^\top \\ g(x)^\top \end{pmatrix} \Sigma
 \bigl(f(x), g(x)\bigr)
 \right)^{\frac{1}{2}}_{2\times 2}}
 {f(x)^\top \Sigma f(x)} \, \dd x,
\label{len}
\end{align}
where $g(x)=\dd f(x)/\dd x$.
We call (\ref{len}) the tube-volume (TV) criterion.

Note that (\ref{len}) is invariant with respect to scale
$\Sigma\mapsto k\Sigma$ ($k>0$).
Because of this,
we introduce the set of all nonnegative finite measures
 (not necessarily probability measures)
on $\mathcal{X}$ denoted by $\mathcal{P}$.
Each element of $\rho\in\mathcal{P}$ corresponds to an experimental design.
This is an extension of the design measure of \cite{Kiefer-Wolfowitz59}.
We also extend the set of moment matrices.
Thus,
the set of all non-singular information matrices is denoted by
\begin{align*}
 \mathcal{M}
 =& \biggl\{ \int_{\mathcal{X}} f(x) f(x)^\top \frac{1}{\sigma^2(x)}\dd \rho(x) \succ 0 \mid \rho\in\mathcal{P} \biggr\} \\
 =& \biggl\{ \int_{\mathcal{X}} f(x) f(x)^\top \dd \rho(x) \succ 0 \mid \rho\in\mathcal{P} \biggr\},
\end{align*}
where ``$\succ 0$'' denotes positive definiteness.
From its definition, $\mathcal{M}$ forms a convex cone.
Our optimal design problem is formulated as minimizing $\Vol_1(\gamma_\Sigma)$ in (\ref{len}) subject to $\Sigma^{-1}\in\mathcal{M}$.
The lemma below is a direct consequence of this invariance.
\begin{lem}
\label{lem:proportional}
The design $\left\{\begin{matrix} x_i \\ p_i \end{matrix}\right\}_{1\le i\le N}$ with variance function $\sigma_1^2(x)$, and the design $\left\{\begin{matrix} x_i \\ q_i \end{matrix}\right\}_{1\le i\le N}$ with variance function $\sigma_2^2(x)$,
$q_i=k p_i \sigma_2^2(x_i)/\sigma_1^2(x_i)$, give the same volume,
where $k>0$ is a constant so that $\sum_i q_i=1$.
\end{lem}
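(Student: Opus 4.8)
The plan is to reduce the assertion to the scale invariance of the tube-volume criterion (\ref{len}) already noted above. Since $\Vol_1(\gamma_\Sigma)$ depends on the two designs only through their respective covariance matrices $\Sigma = M^{-1}$, it suffices to show that the two information matrices differ by a positive multiplicative constant.

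First I would write out the two information matrices explicitly. The first design places mass $p_i$ at $x_i$ and uses variance function $\sigma_1^2$, so its information matrix is $M_1 = \sum_{i=1}^N p_i\, f(x_i) f(x_i)^\top/\sigma_1^2(x_i)$; the second places mass $q_i$ at $x_i$ and uses variance function $\sigma_2^2$, giving $M_2 = \sum_{i=1}^N q_i\, f(x_i) f(x_i)^\top/\sigma_2^2(x_i)$.

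Next I would substitute the prescribed reweighting $q_i = k\, p_i\, \sigma_2^2(x_i)/\sigma_1^2(x_i)$ into $M_2$. The factor $\sigma_2^2(x_i)$ then cancels against the weight $1/\sigma_2^2(x_i)$ in each summand, so that term reduces to $k\, p_i\, f(x_i) f(x_i)^\top/\sigma_1^2(x_i)$ and hence
\[
 M_2 = k \sum_{i=1}^N p_i\, f(x_i) f(x_i)^\top/\sigma_1^2(x_i) = k\, M_1 .
\]
Inverting gives $\Sigma_2 = M_2^{-1} = k^{-1} M_1^{-1} = k^{-1}\Sigma_1$, i.e.\ the two covariance matrices are proportional.

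Finally I would invoke the scale invariance of (\ref{len}): since each integrand there is homogeneous of degree zero in $\Sigma$ (the square-rooted $2\times 2$ determinant in the numerator scales like $\Sigma$, as does the denominator $f(x)^\top\Sigma f(x)$), the substitution $\Sigma \mapsto k^{-1}\Sigma$ leaves $\Vol_1(\gamma_\Sigma)$ unchanged, whence $\Vol_1(\gamma_{\Sigma_2}) = \Vol_1(\gamma_{\Sigma_1})$. There is no serious obstacle here; the lemma is essentially immediate once the information matrices are written down. The only point requiring care is the bookkeeping of the $1/\sigma^2$ weights --- one must check that the reweighting of the masses and the change of variance function combine into a single scalar factor common to all summands, rather than an $x$-dependent rescaling, which is exactly what makes $M_2$ a scalar multiple of $M_1$ rather than merely congruent to it.
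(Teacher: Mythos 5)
Your proposal is correct and follows essentially the same route as the paper: the paper's proof is exactly the observation that the two information matrices are proportional ($M_1 = k M_2$ in the paper's labeling, $M_2 = kM_1$ in yours, which is the same statement), combined with the scale invariance $\Sigma \mapsto k\Sigma$ of the criterion (\ref{len}) noted just before the lemma. You simply spell out the cancellation of the $\sigma_2^2(x_i)$ factors and the degree-zero homogeneity of the integrand, which the paper leaves implicit.
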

\begin{proof}
The information matrices $M_1$ and $M_2$ of the two designs satisfy $M_1=k M_2$.
\end{proof}

A difficulty with this optimization problem is that this is not a convex problem.
Figure \ref{fig:nonconvexity} depicts the volume $\Vol_1(\gamma_{M(c)^{-1}})$ for a mixing design connecting two Fourier designs
with three element bases ((\ref{fourier}) with $n=3$)
with weights $1-c$ and $c$\,:
\[
 \left\{\begin{matrix} t_i \\ p_i \end{matrix}\right\}_{i=1,2,3}
 = \left\{\begin{matrix} -\frac{1}{3} & 0 & \frac{1}{3} \\[1mm] \frac{1}{3} & \frac{1}{3} & \frac{1}{3} \end{matrix}\right\}
\ \ \mbox{and} \ \ %
 \left\{\begin{matrix} -\frac{1}{12} & 0 & \frac{1}{12} \\[1mm] \frac{1}{3} & \frac{1}{3} & \frac{1}{3} \end{matrix}\right\}.
\]
The information matrix is
\[
M(c) = (1-c)
\begin{pmatrix}
 1 & 0 & 0 \\
 0 & 1 & 0 \\
 0 & 0 & 1 \\
\end{pmatrix}
+c
\begin{pmatrix}
 1 & 0 & \frac{\sqrt{2}+\sqrt{6}}{3} \\
 0 & \frac{1}{3} & 0 \\
 \frac{\sqrt{2}+\sqrt{6}}{3} & 0 & \frac{5}{3}
\end{pmatrix}.
\]
We see that $\Vol_1(\gamma_{M^{-1}(c)})$ is not convex in $c$.

\begin{figure}[h]
\begin{center}
\scalebox{0.6}{\includegraphics{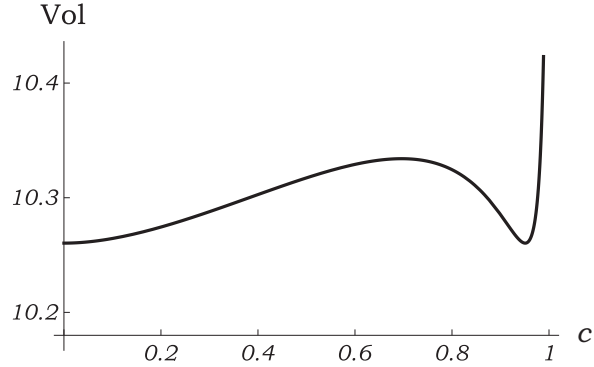}}
\end{center}
\caption{$\Vol_1(\gamma_{M(c)^{-1}})$ for a mixing design.}
\label{fig:nonconvexity}
\end{figure}

\section{Tube-volume optimal design for polynomial and Fourier regressions}
\label{sec:fourier}

\subsection{Equivalence between weighted polynomial regression and Fourier regression}

From this section, we focus on Fourier regression and weighted polynomial regression.
In linear optimal design theory, the Fourier regression model has been used as one of the standard models to see the performance of the proposed criteria.
The weighted polynomial introduced here has the same mathematical structure as the Fourier regression and will be used to analyze it.

The Fourier (trigonometric) regression has basis vector
\begin{align}
f(x)
=& f_F(x) \nonumber \\
=&
\begin{cases}
 \bigl(1,\sqrt{2}\sin(2\pi x),\sqrt{2}\cos(2\pi x),\sqrt{2}\sin(4\pi x),\ldots,\sqrt{2}\cos(2\pi m x)\bigr)^\top \\ \hspace*{85mm} (n=2m+1), \\
 \bigl(\sqrt{2}\cos(\pi x),\sqrt{2}\sin(\pi x),\sqrt{2}\cos(3\pi x),\ldots,\sqrt{2}\cos(\pi (n-1) x)\bigr)^\top \\ \hspace*{85mm} (n=2m)
\end{cases}
\label{fourier}
\end{align}
defined on the domain $\mathcal{X}=(-1/2,1/2]$.
For the Fourier regression, we only deal with the constant variance $\sigma^2(x)=\sigma_F^2(x)\equiv 1$.
Although the Fourier regression is not used for even values of $n$ in practice, we define it for the sake of consistency.

The polynomial regression is a regression model with basis vector
\begin{equation}
\label{polynomial}
 f(x)=f_P(x)=(1,x,x^2,\ldots,x^{n-1})^\top \in\R[x]^n.
\end{equation}
Here, we set the domain $\mathcal{X}$ to be the whole real line $\mathbb{R}$.
The case where $\mathcal{X}$ is an infinite interval will be briefly discussed in Section \ref{sec:summary}.
For the polynomial regression, we assume the variance function of form $\sigma^2(x) = Q(x)^{n-1}$, where $Q(x)$ is an arbitrary positive quadratic function.
As a canonical form of this class of variance functions, we use
\begin{equation}
\label{sigmaP}
 \sigma_P^2(x) = (1+x^2)^{n-1}.
\end{equation}
Later, we introduce a parameterization for $Q(x)$ (see (\ref{sigmaPa})).

In this subsection, we see that under the tube-volume criterion,
the optimization problem for the Fourier regression is equivalent to that for the weighted polynomial regression.
That is, the optimization problem in the Fourier regression can be translated to one in the weighted polynomial regression, and vice versa.

The model we discuss is a special case of the model proposed by \cite{Dette-etal99}, Section 2.2, who study the D-optimality.
Further, \cite{Dette-Melas03} make use of the connection between the weighted polynomial and Fourier regressions.
We will return briefly to question on the D-optimality in Section \ref{subsec:D-optimal} below.

From the lemma below, the transformation $x=\tan(\pi t)$ connects the two regression models.
\begin{lem}
\label{lem:B}
There exists an $n\times n$ non-singular matrix $B$ such that for all $x\in\R$ and $t\in(-1/2,1/2]$ satisfying $x=\tan(\pi t)$, we have
\begin{equation}
\label{B}
 f_F(t) = B f_P(x) \lambda_0(x), \quad \lambda_0(x)=1/(1+x^2)^{(n-1)/2}.
\end{equation}
\end{lem}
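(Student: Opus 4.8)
The plan is to make the tangent substitution explicit and then track degrees. First I would record that for $t\in(-1/2,1/2)$ the relation $x=\tan(\pi t)$ gives $\cos(\pi t)=1/\sqrt{1+x^2}$ and $\sin(\pi t)=x/\sqrt{1+x^2}$, the sign being fixed because $\cos(\pi t)>0$ on this interval, so that
\[
 e^{\mathrm{i}\pi t}=\cos(\pi t)+\mathrm{i}\sin(\pi t)=\frac{1+\mathrm{i}x}{\sqrt{1+x^2}}.
\]
Raising this to the $j$-th power (de Moivre) yields $e^{\mathrm{i}j\pi t}=(1+\mathrm{i}x)^j/(1+x^2)^{j/2}$, whence
\[
 \cos(j\pi t)=\frac{\mathrm{Re}\,(1+\mathrm{i}x)^j}{(1+x^2)^{j/2}},\qquad
 \sin(j\pi t)=\frac{\mathrm{Im}\,(1+\mathrm{i}x)^j}{(1+x^2)^{j/2}},
\]
and both numerators are polynomials in $x$ of degree at most $j$.

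Next I would read off from (\ref{fourier}) which frequencies $j$ actually occur. For $n=2m+1$ the entries of $f_F$ are the constant together with $\cos(j\pi t),\sin(j\pi t)$ for the even indices $j=2,4,\dots,n-1$, while for $n=2m$ they are $\cos(j\pi t),\sin(j\pi t)$ for the odd indices $j=1,3,\dots,n-1$. In either case every occurring $j$ satisfies $0\le j\le n-1$ and $j\equiv n-1\pmod 2$. Consequently $(n-1-j)/2$ is a nonnegative integer, and multiplying any entry of $f_F(t)$ by $(1+x^2)^{(n-1)/2}=1/\lambda_0(x)$ turns it into the real (or imaginary) part of $(1+\mathrm{i}x)^j$ times the polynomial $(1+x^2)^{(n-1-j)/2}$, hence into a genuine polynomial in $x$ of degree at most $j+(n-1-j)=n-1$. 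Thus each component of $f_F(t)/\lambda_0(x)$ lies in the span of $1,x,\dots,x^{n-1}$, i.e.\ in the span of the entries of $f_P(x)$. Collecting the coefficient vectors as the rows of an $n\times n$ matrix $B$ gives $f_F(t)=B\,f_P(x)\,\lambda_0(x)$, the asserted identity.

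Finally I would argue that $B$ is non-singular. Suppose $c^\top B=0$ for some $c\in\R^n$; then $c^\top f_F(t)=c^\top B\,f_P(x)\,\lambda_0(x)=0$ for all $t$. Because the trigonometric functions making up $f_F$ have pairwise distinct frequencies they are linearly independent, forcing $c=0$. Hence $B$ has trivial left kernel and, being square, is invertible.

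The only delicate points are bookkeeping: I must verify the parity statement $j\equiv n-1\pmod 2$ simultaneously in the odd and even cases, so that the factor $(1+x^2)^{(n-1)/2}$ exactly clears the half-integer power $(1+x^2)^{j/2}$ and leaves a polynomial; and I should note that $t=1/2$, where $x=\tan(\pi t)$ has no real solution, is simply excluded (or handled by continuity). I expect this parity/degree accounting to be the step most prone to error, whereas the substitution and the non-singularity argument are routine.
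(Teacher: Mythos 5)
Your proof is correct, but it takes a genuinely different route from the paper's in both halves. For the existence of $B$, the paper substitutes the rational parametrization $\sin(2\pi t)=2x/(1+x^2)$, $\cos(2\pi t)=(1-x^2)/(1+x^2)$ into real multiple-angle (Chebyshev-type) expansions of $\sin(2\pi kt)$ and $\cos(2\pi kt)$, whereas you run de Moivre on $e^{\mathrm{i}\pi t}=(1+\mathrm{i}x)/\sqrt{1+x^2}$ and clear the half-integer powers of $(1+x^2)$ using the parity bookkeeping $j\equiv n-1\pmod 2$. Your version is arguably tidier: it treats the odd-$n$ (even frequencies) and even-$n$ (odd frequencies) bases in one stroke, while the identities the paper quotes are stated for integer multiples of $2\pi t$ and so fit the odd-$n$ basis most directly. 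For non-singularity, the paper integrates: by Fourier orthogonality, $I_n=B\bigl(\int_{-\infty}^{\infty} f_P(x)\lambda_0(x)^2 f_P(x)^\top\,\dd x/(\pi(1+x^2))\bigr)B^\top$, and it then suffices that the middle matrix is finite --- a rank argument whose by-product is the explicit beta-integral formula (\ref{BB}) for $(B^\top B)^{-1}$, which the paper reuses later (e.g.\ in Theorem \ref{thm:d-optimal}). You instead kill the left kernel of $B$ directly via linear independence of the trigonometric system; this is more elementary and needs no integration, but it produces no formula for $(B^\top B)^{-1}$. One pedantic point: the entries of $f_F$ are not of \emph{pairwise distinct} frequencies, since each frequency contributes both a sine and a cosine; the system $\{1,\cos(j\pi t),\sin(j\pi t)\}$ is nonetheless linearly independent (by orthogonality over the period, or by analyticity), so your conclusion stands. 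Either argument suffices for the lemma as stated.
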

\begin{proof}
Note that
\begin{equation}
\label{high-school}
 \sin(2\pi t) = \frac{2x}{1+x^2}, \quad
 \cos(2\pi t) = \frac{1-x^2}{1+x^2}, \quad
 \dd t = \frac{\dd x}{\pi(1+x^2)}.
\end{equation}
It is known that
\begin{align*}
 \sin(2\pi k t) =& \sum_{r=0}^{[(k-1)/2]} (-1)^r {k \choose 2r+1} \sin^{2r+1}(2\pi t)\cos^{k-2r-1}(2\pi t), \\
\cos(2\pi k t) =& \sum_{r=0}^{[k/2]} (-1)^r {k \choose 2r} \sin^{2r}(2\pi t)\cos^{k-2r}(2\pi t)
\end{align*}
\cite[pages 186--187]{Moriguti-etal57}.
Substituting the formulas for $\sin(2\pi t)$ and $\cos(2\pi t)$ in (\ref{high-school}) and expressing $f_F(t)$ as a rational function in $x$, we have formula (\ref{B}).

To prove that $B$ is non-singular, consider the integral
\begin{equation}
\label{parentheses}
 \int_{-1/2}^{1/2}  f_F(t) f_F(t)^\top \dd t
= B \biggl( \int_{-\infty}^{\infty} f_P(x) \lambda_0(x)^2 f_P(x)^\top \frac{\dd x}{\pi(1+x^2)} \biggr) B^\top.
\end{equation}
Here, we used (\ref{high-school}).
The left-hand side is the identity matrix $I_n$ by standard orthogonality.
Hence, it is enough to check that the integrals in the parentheses of the right-hand side exist.
The matrix in the parentheses of the right-hand side of (\ref{parentheses}) is $(B^\top B)^{-1}$ with $(i,j)$ element
\begin{equation}
\label{BB}
 (B^\top B)^{-1}_{i,j} = \int_{-\infty}^{\infty}\frac{x^{i+j-2}}{\pi(1+x^2)^n} \dd x = \begin{cases} \displaystyle
 \frac{\Gamma\bigl(\frac{i+j-1}{2}\bigr)\Gamma\bigl(n-\frac{i+j-1}{2}\bigr)}{\pi\Gamma(n)} & (\mbox{$i+j$ is even}), \\ 0 &  (\mbox{$i+j$ is odd}), \end{cases}
\end{equation}
which exists for $i,j\le n$.
Hence, $B$ is non-singular.
\end{proof}
Lemma \ref{lem:B} means that the Fourier regression model
$y_i=b^\top f_F(t_i)+\varepsilon_i$, $\varepsilon_i\sim N(0,1)$,
is rewritten as the weighted polynomial model
$\widetilde y_i=\widetilde b^\top f_P(x_i)+\widetilde\varepsilon_i$,
by letting $x_i=\tan(\pi t_i)$, $\widetilde y_i=\lambda_0(x_i)^{-1}y_i$,
$\widetilde b=B^\top b$, and
$\widetilde\varepsilon_i\sim N(0,\lambda_0(x_i)^{-2})$.

When $n=3, 4$,
\[
 B=
\begin{pmatrix}
 1 & 0 & 1 \\ 0 & 2\sqrt{2} & 0 \\ \sqrt{2} & 0 & -\sqrt{2}
\end{pmatrix}, \ \ %
\begin{pmatrix}
 1 & 0 & 1 & 0 \\ 0 & 1 & 0 & 1 \\ 1 & 0 & -3 & 0 \\ 0 & 3 & 0 & -1
\end{pmatrix},
\]
respectively.

The set of information matrices for the polynomial regression
\begin{equation}
 \mathcal{M}_P
 = \biggl\{ \int_{-\infty}^\infty f_P(x) f_P(x)^\top \dd \rho(x) \succ 0 \mid \rho\in\mathcal{P} \biggr\}
\label{MP}
\end{equation}
is referred to as the moment cone \citep{Karlin-Studden66}.
The set of information matrices for the Fourier regression is given by
\begin{align}
 \mathcal{M}_F
& = \biggl\{ \int_{-1/2}^{1/2} f_F(t) f_F(t)^\top \dd \rho(t) \succ 0 \mid \rho\in\mathcal{P} \biggr\} \nonumber \\
& = \bigl\{ B M B^\top \mid M\in \mathcal{M}_P \bigr\} = B \mathcal{M}_P B^\top.
\label{MF}
\end{align}

The following lemma gives the equivalence of the Fourier regression and the polynomial regression as the optimization problem for the tube-volume criterion.

\begin{thm}
\label{thm:VolF-VolP}
Let $\Vol_F(\gamma_{M^{-1}})$ and $\Vol_P(\gamma_{M^{-1}})$ be the length of $\gamma_{M^{-1}}$ given in (\ref{len}) with $f(x)$ being $f_F(x)$ in (\ref{fourier}), and $f_P(x)$ in (\ref{polynomial}), respectively.
Then, it holds that $\Vol_P(\gamma_{M^{-1}})=\Vol_F(\gamma_{(B M B^\top)^{-1}})$.
\end{thm}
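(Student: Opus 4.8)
The plan is to transform the Fourier arc-length integral $\Vol_F(\gamma_{(BMB^\top)^{-1}})$ into the polynomial one $\Vol_P(\gamma_{M^{-1}})$ by pushing the substitution $x=\tan(\pi t)$ through the integrand (\ref{len}) factor by factor. Write $\Sigma_F=(BMB^\top)^{-1}$ and $\Sigma_P=M^{-1}$. Two structural facts drive the argument. First, by Lemma \ref{lem:B} the Fourier basis factors as $f_F(t)=B\,h(x)$ with $h(x)=\lambda_0(x)f_P(x)$, and the conjugation $\Sigma_F=(B^\top)^{-1}\Sigma_P B^{-1}$ is arranged precisely so that $B^\top\Sigma_F B=\Sigma_P$. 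Second, the ratio $\sqrt{\det G}/G_{11}$ appearing in (\ref{len}), where $G$ is the $2\times2$ Gram matrix of $(f,g)$ in the $\Sigma$-inner product, is invariant under positive rescaling $f\mapsto\lambda_0 f$ of the basis; geometrically this just says the normalized curve $\gamma_\Sigma$ depends only on the \emph{direction} of $\Sigma^{\frac{1}{2}}f(x)$, which $\lambda_0(x)>0$ leaves unchanged.

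First I would differentiate. With $g_F=\dd f_F/\dd t$ and $\dd x/\dd t=\pi(1+x^2)$ from (\ref{high-school}), one gets $g_F=\pi(1+x^2)\,B\,h'(x)$ where $h'=\dd h/\dd x$. Substituting $f_F=Bh$ and $g_F=\pi(1+x^2)Bh'$ into the $2\times2$ Gram matrix and using $B^\top\Sigma_F B=\Sigma_P$ collapses every $B$, leaving
\[
\begin{pmatrix}
 h^\top\Sigma_P h & \pi(1+x^2)\,h^\top\Sigma_P h' \\
 \pi(1+x^2)\,(h')^\top\Sigma_P h & \pi^2(1+x^2)^2\,(h')^\top\Sigma_P h'
\end{pmatrix}.
\]
Its determinant factors as $\pi^2(1+x^2)^2\bigl[(h^\top\Sigma_P h)((h')^\top\Sigma_P h')-(h^\top\Sigma_P h')^2\bigr]$, so the Fourier integrand equals $2\pi(1+x^2)\sqrt{\det_h}/(h^\top\Sigma_P h)$ with $\det_h$ the bracketed quantity. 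Changing variables to $x\in\R$ via $\dd t=\dd x/(\pi(1+x^2))$ then cancels the surviving factor $\pi(1+x^2)$ exactly, reducing $\Vol_F$ to $2\int_\R \sqrt{\det_h}/(h^\top\Sigma_P h)\,\dd x$.

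It remains to strip the weight $\lambda_0$. Writing $h=\lambda_0 f_P$ and $h'=\lambda_0' f_P+\lambda_0 g_P$ with $g_P=\dd f_P/\dd x$, and abbreviating $a=f_P^\top\Sigma_P f_P$, $b=f_P^\top\Sigma_P g_P$, $c=g_P^\top\Sigma_P g_P$, a short expansion shows the $\lambda_0'$-cross terms cancel: $\det_h=\lambda_0^4(ac-b^2)$ while $h^\top\Sigma_P h=\lambda_0^2 a$, so (using $\lambda_0>0$) the ratio $\sqrt{\det_h}/(h^\top\Sigma_P h)$ equals $\sqrt{ac-b^2}/a$, which is exactly the polynomial integrand in (\ref{len}). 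Hence $\Vol_F(\gamma_{\Sigma_F})=\Vol_P(\gamma_{\Sigma_P})$, as claimed.

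I expect the only real obstacle to be this last expansion: although the scaling invariance is intuitively clear from the geometric reparametrization picture, the derivative $h'$ mixes $f_P$ and $g_P$, so one must verify explicitly that the $\lambda_0'$-terms drop out of the $2\times2$ determinant rather than take it for granted. Everything else---the cancellation of $B$ via $B^\top\Sigma_F B=\Sigma_P$, and the matching of the Jacobian $\pi(1+x^2)$ against $\dd x/\dd t$---is bookkeeping that follows directly from Lemma \ref{lem:B} and (\ref{high-school}).
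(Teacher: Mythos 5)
Your proof is correct and follows essentially the same route as the paper: both push the substitution $x=\tan(\pi t)$ through the integrand of (\ref{len}) using Lemma \ref{lem:B}, and both rest on the fact that the Gram determinant scales by $\lambda_0^4(\dd x/\dd t)^2$ while the quadratic form scales by $\lambda_0^2$, so the ratio picks up exactly the Jacobian. The only cosmetic difference is that the paper packages your hand-expanded cancellation of the $\lambda_0'$ cross-terms into the single factorization $\bigl(f_F,g_F\bigr)=B\bigl(f_P,g_P\bigr)\begin{pmatrix}\lambda_0 & \dot\lambda_0\\ 0 & \lambda_0\end{pmatrix}\begin{pmatrix}1 & 0\\ 0 & \dd x/\dd t\end{pmatrix}$, whose triangular middle factor makes the determinant identity immediate.
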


\begin{proof}
The derivatives of $f_F(t)$ and $f_P(x)$ are denoted by
$g_F(t)=\dd f_F(t)/\dd t$ and $g_P(x)=\dd f_P(x)/\dd x$, respectively.
Then,
\[
 \bigl(f_F(t),g_F(t)\bigr) = B \bigl(f_P(x),g_P(x)\bigr) \begin{pmatrix} \lambda_0(x) & \dot\lambda_0(x) \\ 0 & \lambda_0(x) \end{pmatrix}
 \begin{pmatrix} 1 & 0 \\ 0 & \frac{\dd x}{\dd t} \end{pmatrix}.
\]
Therefore,
\[
 f_F(t)^\top (B M B^\top)^{-1} f_F(t)
= f_P(x)^\top M^{-1} f_P(x) \times \lambda_0(x)^2,
\]
\begin{align*}
& \det\left(
 \begin{pmatrix} f_F(t)^\top \\ g_F(t)^\top \end{pmatrix} (B M B^\top)^{-1}
 \bigl(f_F(t),g_F(t)\bigr) \right) \\
& \quad =
 \det\left(
 \begin{pmatrix} f_P(x)^\top \\ g_P(x)^\top \end{pmatrix} M^{-1}
 \bigl(f_P(x),g_P(x)\bigr) \right)
 \times \lambda_0(x)^4 \Bigl(\frac{\dd x}{\dd t}\Bigr)^2,
\end{align*}
and
\begin{align*}
\int_{-1/2}^{1/2}
 \frac{\det\left(
 \begin{pmatrix} f_F(t)^\top \\ g_F(t)^\top \end{pmatrix} (B M B^\top)^{-1}
 \bigl(f_F(t),g_F(t)\bigr) \right)^{\frac{1}{2}}}
 {f_F(t)^\top (B M B^\top)^{-1} f_F(t)} \,\dd t \\
=
 \int_{-\infty}^\infty
 \frac{\det\left(
 \begin{pmatrix} f_P(x)^\top \\ g_P(x)^\top \end{pmatrix} M^{-1}
 \bigl(f_P(x),g_P(x)\bigr) \right)^{\frac{1}{2}}}
 {f_P(x)^\top M^{-1} f_P(x)} \,\dd x.
\end{align*}
\end{proof}

Theorem \ref{thm:VolF-VolP} and (\ref{MF}) imply that
\begin{eqnarray*}
&& \mbox{$M\in\mathcal{M}_P$ is the minimizer of $\Vol_P(M)$} \\
&\Leftrightarrow&
\mbox{$M'=B M B^\top\in\mathcal{M}_F$ is the minimizer of $\Vol_P(M')$.}
\end{eqnarray*}
That is, the optimization problems
for the polynomial regression and the Fourier regression are mathematically equivalent.
For example, the information matrix
$M=\sum_i f_F(t_i) f_F(t_i)^\top p_i\in \mathcal{M}_F$
for the Fourier regression, and the information matrix
for the polynomial regression
\begin{align*}
\mathcal{M}_P\ni
B^{-1} M (B^\top)^{-1}
=& \sum_i B^{-1} f_F(t_i) (B^{-1} f_F(t_i))^\top p_i \\
=& \sum_i f_P(x_i) f_P(x_i)^\top \lambda_0(x_i)^2 p_i, \quad x_i=\tan(\pi t_i), \\
=& \sum_i f_P(x_i) f_P(x_i)^\top \frac{1}{\sigma_P^2(x_i)} p_i
\end{align*}
give the same volume.

This equivalence is stated in terms of design measure as follows.
\begin{thm}
\label{thm:fourier-polynomial}
The design $\left\{\begin{matrix} t_i \\ p_i \end{matrix}\right\}_{1\le i\le N}$ for the Fourier regression, and the design
$\left\{\begin{matrix} x_i \\ p_i \end{matrix}\right\}_{1\le i\le N}$, $x_i=\tan(\pi t_i)$, for the weighted polynomial regression with variance function $\sigma_P^2(x)$ give the same volume.
If the former is tube-volume optimal in the Fourier regression, then so is the latter in the polynomial regression with variance $\sigma_P^2(x)$, and vice versa.
\end{thm}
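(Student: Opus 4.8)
The plan is to reduce this statement about design measures to the information-matrix identity already established in Theorem~\ref{thm:VolF-VolP}. First I would form the Fourier information matrix $M_F=\sum_i f_F(t_i)f_F(t_i)^\top p_i$ and substitute the factorization of Lemma~\ref{lem:B}. Writing $x_i=\tan(\pi t_i)$ and using $f_F(t_i)=B f_P(x_i)\lambda_0(x_i)$ from (\ref{B}), the matrix $B$ factors out on both sides, giving
\[
 M_F = B\Bigl(\sum_i f_P(x_i)f_P(x_i)^\top \lambda_0(x_i)^2 p_i\Bigr) B^\top = B\,M_P\,B^\top,
\]
where $M_P=\sum_i f_P(x_i)f_P(x_i)^\top \lambda_0(x_i)^2 p_i$. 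Since $\lambda_0(x)^2=1/(1+x^2)^{n-1}=1/\sigma_P^2(x)$ by (\ref{sigmaP}), the matrix $M_P$ is exactly the information matrix of the weighted polynomial design $\left\{\begin{matrix} x_i \\ p_i \end{matrix}\right\}$ with variance $\sigma_P^2$. Applying Theorem~\ref{thm:VolF-VolP} with $M=M_P$ then yields $\Vol_P(\gamma_{M_P^{-1}})=\Vol_F(\gamma_{(BM_PB^\top)^{-1}})=\Vol_F(\gamma_{M_F^{-1}})$, which is the asserted equality of volumes.

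For the optimality claim I would exploit that $M\mapsto BMB^\top$ is a bijection of $\mathcal{M}_P$ onto $\mathcal{M}_F$ by (\ref{MF}), and that this bijection preserves the tube volume by Theorem~\ref{thm:VolF-VolP}. The substitution $x_i=\tan(\pi t_i)$ (with inverse $t_i=\arctan(x_i)/\pi$) is a bijection between Fourier designs on $(-1/2,1/2)$ and weighted polynomial designs on $\R$, and under it the attainable information matrices correspond precisely through $M_F=BM_PB^\top$. Hence a Fourier design whose information matrix minimizes the volume over $\mathcal{M}_F$ is carried to a polynomial design whose information matrix minimizes the volume over $\mathcal{M}_P$, and conversely; translating back to the design measures gives the two implications.

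The content here is essentially bookkeeping on top of Theorem~\ref{thm:VolF-VolP}, so the main thing to be careful about is that the correspondence of design measures is a genuine bijection onto each full cone, so that the property of \emph{being a minimizer} really transfers. The only delicate point is the boundary: the endpoint $t=1/2$ is sent to $x=\infty$ under $x=\tan(\pi t)$, so strictly the tangent map is a bijection only between $(-1/2,1/2)$ and $\R$. Since we restrict attention to designs with nonsingular information matrices and, for the discrete designs considered, finite support, this single boundary point can be excluded without affecting either cone of attainable information matrices, and the argument goes through.
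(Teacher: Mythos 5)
Your proof is correct and takes essentially the same route as the paper: the paper likewise factors the Fourier information matrix via Lemma~\ref{lem:B} as $M_F = B M_P B^\top$ with $\lambda_0(x)^2 = 1/\sigma_P^2(x)$, and then invokes Theorem~\ref{thm:VolF-VolP} together with the identity $\mathcal{M}_F = B\mathcal{M}_P B^\top$ in (\ref{MF}) to transfer both the equality of volumes and the minimizer property between the two cones. Your closing remark on the boundary point $t=1/2$ is a detail the paper leaves implicit, but it does not change the argument.
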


In this paper, the (discrete) uniform designs in the Fourier regression
and their counterparts in the polynomial regression play important roles.
It is known that, in the Fourier regression, the uniform design in which $x_i$ are allocated as equally spaced with equal weights is D-optimal \citep{Guest58}.
Because of the symmetry, it is conjectured that the uniform design is the tube-volume optimal design as well.
In Section \ref{sec:n=3}, we prove that this is true for $n=3$, and conjecture that it is true for all $n$.

The $n$-point discrete uniform design for the Fourier regression symmetric about the origin is
\begin{equation}
\label{uniform}
 \left\{\begin{matrix} t^0_i \\ \frac{1}{n} \end{matrix}\right\}_{1\le i\le n},
 \quad
t^0_i= \frac{i}{n}-\frac{n+1}{2n}.
\end{equation}
For later use, we provide the concrete forms of the information matrix $M$ for the weighted polynomial designs with $\sigma^2(x)=\sigma_P^2(x)$ in (\ref{sigmaP}),
\begin{equation}
\label{uniform-poly}
 \left\{\begin{matrix} x^0_i \\ \frac{1}{n} \end{matrix}\right\}_{1\le i\le n},
 \quad
 \mbox{$x^0_i=\tan(\pi t^0_i)$ with $t^0_i$ given in (\ref{uniform})}.
\end{equation}

\begin{lem}
\label{lem:M}
The information matrix
 $M=(M_{i,j})$ of the weighted polynomial design (\ref{uniform-poly}) scaled such that $M_{1,1}=1$ is given by
\[
 M = (M_{i,j}) = (m_{i+j-2})_{1\le i,j\le n}, \quad
 m_k = \begin{cases} \displaystyle
 \frac{\Gamma\bigl(\frac{k+1}{2}\bigr)\Gamma\bigl(n-\frac{k+1}{2}\bigr)}{\sqrt{\pi}\Gamma\bigl(n-\frac{1}{2}\bigr)} & (\mbox{$k$ is even}), \\
 0 & (\mbox{$k$ is odd}\/). \end{cases}
\]
\end{lem}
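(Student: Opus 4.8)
The plan is to bypass a direct evaluation of the finite sum defining $M$ and instead route the computation through the Fourier--polynomial correspondence of Lemma~\ref{lem:B}, where the analogous integral has already been carried out in~(\ref{BB}). First I would write the information matrix of the weighted polynomial design~(\ref{uniform-poly}) explicitly as
\[
 M_{i,j}=\frac{1}{n}\sum_{\ell=1}^n \frac{(x^0_\ell)^{i+j-2}}{\sigma_P^2(x^0_\ell)},
 \qquad \sigma_P^2(x)=(1+x^2)^{n-1},
\]
and observe that each entry depends on $(i,j)$ only through $i+j-2$. This immediately yields the Hankel structure $M_{i,j}=m_{i+j-2}$ asserted in the lemma; moreover, since the nodes $x^0_\ell=\tan(\pi t^0_\ell)$ are symmetric about the origin (the $t^0_\ell$ in~(\ref{uniform}) are symmetric and $\tan$ is odd), the sum of any odd power vanishes, giving $m_k=0$ for odd $k$.

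The key step is to identify $M$ with $(B^\top B)^{-1}$. By Theorem~\ref{thm:fourier-polynomial} and the explicit correspondence preceding it, the weighted polynomial information matrix of~(\ref{uniform-poly}) equals $B^{-1}M_F(B^\top)^{-1}$, where $M_F=\frac{1}{n}\sum_\ell f_F(t^0_\ell)f_F(t^0_\ell)^\top$ is the Fourier information matrix of the uniform design~(\ref{uniform}). I would then show that $M_F=I_n$: the trigonometric basis~(\ref{fourier}) satisfies the discrete orthogonality relations at the $n$ equally spaced nodes $t^0_\ell$ (no aliasing occurs because the highest frequency is matched to $n$), which is precisely the computation underlying the D-optimality of the uniform design \citep{Guest58}. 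With $M_F=I_n$ we obtain $M=B^{-1}(B^\top)^{-1}=(B^\top B)^{-1}$.

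Finally, I would read off the entries of $(B^\top B)^{-1}$ from~(\ref{BB}), which gives, with $k=i+j-2$,
\[
 (B^\top B)^{-1}_{i,j}=\frac{\Gamma\bigl(\tfrac{k+1}{2}\bigr)\Gamma\bigl(n-\tfrac{k+1}{2}\bigr)}{\pi\,\Gamma(n)}
\]
for even $k$, recovering the vanishing for odd $k$ as a cross-check. Since this matrix has $(1,1)$ entry $\Gamma(n-\tfrac12)/(\sqrt{\pi}\,\Gamma(n))$ (using $\Gamma(\tfrac12)=\sqrt{\pi}$), rescaling so that $M_{1,1}=1$ amounts to dividing by this factor, whereupon the $\pi\,\Gamma(n)$ in the denominator collapses to $\sqrt{\pi}\,\Gamma(n-\tfrac12)$, yielding exactly the claimed $m_k$. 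The main obstacle I anticipate is the rigorous verification that $M_F=I_n$ for both parities of $n$: the even case in~(\ref{fourier}) uses half-integer frequencies, so the discrete orthogonality must be checked separately there, and one must confirm that the number of nodes $n$ exactly suffices to make the $n$ basis functions discretely orthonormal without aliasing.
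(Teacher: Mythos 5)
Your proposal is correct, but it takes a genuinely different route from the paper's own proof. The paper evaluates the entries of $M$ head-on: substituting $x^0_\ell=\tan(\pi t^0_\ell)$ turns each entry into a trigonometric power sum in $\sin(\pi t^0_\ell)$ and $\cos(\pi t^0_\ell)$, which is then computed by a dedicated combinatorial lemma (Lemma \ref{lem:m}): expand everything in powers of $\omega=e^{i\pi/n}$, note that only exponents divisible by $n$ survive the summation over the equally spaced nodes, and close the computation with Gould's identity (1.41). Your proof sidesteps this combinatorial work entirely by assembling structural facts already available in the paper: the correspondence $M=B^{-1}M_F(B^\top)^{-1}$ from the display preceding Theorem \ref{thm:fourier-polynomial}, the discrete orthonormality $M_F=I_n$ of the Fourier basis at the $n$ equally spaced nodes (this is exactly the statement about the information matrix in Proposition \ref{prop:guest}, attributed to Guest (1958); although that proposition appears later in the paper, its content does not depend on Lemma \ref{lem:M}, so there is no circularity), and the already-computed integral (\ref{BB}), followed by the rescaling $\Gamma(\tfrac12)=\sqrt{\pi}$ bookkeeping, which you carry out correctly. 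What your route buys is a conceptual explanation of the answer: the discrete design reproduces the continuous moments because discrete and continuous orthogonality coincide in the absence of aliasing, making the identity $M\propto(B^\top B)^{-1}$ transparent and consistent with Theorem \ref{thm:d-optimal}. What the paper's route buys is self-containedness (modulo the cited Gould identity) plus the slightly stronger Lemma \ref{lem:m}, which holds for an arbitrary shift $d$ of the nodes. The one step you must still write out in full is $M_F=I_n$ for both parities of $n$: as you note, for even $n$ the basis (\ref{fourier}) uses half-integer frequencies $\pi(2j-1)$, but products of two basis elements involve only frequencies $2\pi r$ with $|r|\le n-1$, so the same no-aliasing argument applies; alternatively, citing Proposition \ref{prop:guest} suffices.
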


For the proof, see Appendix \ref{subsec:gould}.
When $n=3$ and $4$,
\begin{equation}
\label{M3}
 M =
\begin{pmatrix}
 1 & 0 & \frac{1}{3} \\
 0 & \frac{1}{3} & 0 \\
 \frac{1}{3} & 0 & 1
\end{pmatrix}, \ \ %
\begin{pmatrix}
 1 & 0 & \frac{1}{5} & 0 \\
 0 & \frac{1}{5} & 0 & \frac{1}{5} \\
 \frac{1}{5} & 0 & \frac{1}{5} & 0 \\
 0 & \frac{1}{5} & 0 & 1
\end{pmatrix},
\end{equation}
respectively.

The key transform connecting Fourier and polynomial regressions was the tangent transform $x=\tan(\pi t)$.
For the same purpose, generalized transforms
$x = q \tan(\pi(t - \theta)) + r$, $q\ne 0$, can be used.
This is a composite map of the tangent transform and the M\"obius transform to be discussed below.

\subsection{The M\"obius group action on the moment cone}
\label{subsec:mobius}

In this subsection, we introduce the M\"obius group (transformation) acting on the set of design measures and the set of information matrices in polynomial regression.
We will show that the M\"obius group action reduces the dimension of the minimization problem for the tube-volume criterion.
For a recent paper in which the M\"obius transformation acts on polynomials, see \cite{Mackey-etal15}.

The real M\"obius transformation is defined on the extended real numbers $\overline{\mathbb{R}}=\mathbb{R}\cup\{\pm\infty\}$ as follows:
\[
 x \mapsto \varphi(x;a,b,c,d) = \frac{a x +b}{c x +d} \quad (ad-bc \ne 0).
\]
Here, we assume that
\[
 \pm\infty \mapsto \frac{a}{c}, \quad -\frac{d}{c} \mapsto \pm\infty.
\]
This forms a group with product
\begin{equation}
\label{group_product}
\varphi(\cdot;a'a+b'c,a'b+b'd,c'a+d'c,c'b+d'd) = \varphi(\cdot;a',b',c',d')\circ \varphi(\cdot;a,b,c,d).
\end{equation}
The inverse is
$\varphi^{-1}(\cdot;a,b,c,d)=\varphi(\cdot;d,-b,-c,a)$.
The identity element is $e=\varphi(\cdot;a,0,0,a)$, $a\ne 0$.
This is a subgroup of the complex M\"obius group referred to as projective general linear group $\mathrm{PGL}(2,\C)$.

Now, let $f_P(x)=(1,x,\ldots,x^{n-1})^\top$ be the polynomial basis.
We define an $n\times n$ matrix $A=A(a,b,c,d)$ as
\begin{equation}
\label{A}
 f_P\bigl(\varphi(x;a,b,c,d)\bigr) = \lambda(x;a,b,c,d) A f_P(x), \quad
 \lambda(x;a,b,c,d) = \frac{1}{(c x+d)^{n-1}}.
\end{equation}
We write the factor $\lambda$ as $\lambda(x;a,b,c,d)$ instead of $\lambda(x;c,d)$ to clarify that this is an invariant function under the group action (\ref{group_product}) in the sense that
\begin{align}
& \lambda\bigl(\varphi(x;a,b,c,d);a',b',c',d'\bigr) \lambda(x;a,b,c,d) \nonumber \\
&\quad = \lambda(x;a'a+b'c,a'b+b'd,c'a+d'c,c'b+d'd).
\label{product-lambda}
\end{align}

\begin{prop}
The $(i,j)$ element of $A=A(a,b,c,d)$ is
\[
 (A)_{i,j} = \sum_{l=\max(0,i+j-n-1)}^{\min(i-1,j-1)}
 {i-1 \choose l}
 {n-i \choose j-1-l} a^l b^{i-1-l} c^{\,j-1-l} d^{\,n+1-i-j+l}.
\]
\end{prop}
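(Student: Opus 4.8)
The plan is to read the entries of $A$ directly off the defining relation (\ref{A}) by turning it into a polynomial identity. First I would look at the $i$-th component of both sides of (\ref{A}). Since $f_P(x)=(1,x,\ldots,x^{n-1})^\top$, the $i$-th component of $f_P(\varphi(x;a,b,c,d))$ is $\varphi(x;a,b,c,d)^{i-1}=(ax+b)^{i-1}/(cx+d)^{i-1}$, whereas the $i$-th component of the right-hand side is $\lambda(x;a,b,c,d)\sum_{j=1}^n (A)_{i,j}x^{j-1}=(cx+d)^{-(n-1)}\sum_{j=1}^n (A)_{i,j}x^{j-1}$. Multiplying through by $(cx+d)^{n-1}$ clears all denominators and yields the key identity
\[
 (ax+b)^{i-1}(cx+d)^{n-i} = \sum_{j=1}^n (A)_{i,j}\,x^{j-1}.
\]
Thus the $i$-th row of $A$ is precisely the coefficient vector of the polynomial $(ax+b)^{i-1}(cx+d)^{n-i}$, which has degree $(i-1)+(n-i)=n-1$ and so fits exactly into the $n$ coefficients indexed by $j=1,\ldots,n$; in particular $A$ is well defined by (\ref{A}).

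Next I would expand each factor by the binomial theorem, writing $(ax+b)^{i-1}=\sum_{l=0}^{i-1}{i-1 \choose l}a^l b^{i-1-l}x^l$ and $(cx+d)^{n-i}=\sum_{m=0}^{n-i}{n-i \choose m}c^m d^{\,n-i-m}x^m$. Forming the Cauchy product and collecting the coefficient of $x^{j-1}$ amounts to summing over all pairs $(l,m)$ with $l+m=j-1$, that is, setting $m=j-1-l$. This gives the summand ${i-1 \choose l}{n-i \choose j-1-l}a^l b^{i-1-l}c^{\,j-1-l}d^{\,n-i-(j-1-l)}$, and a short check confirms that the exponent of $d$ equals $n-i-(j-1-l)=n+1-i-j+l$, exactly as claimed.

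The only remaining step is to pin down the range of the index $l$. The binomial coefficients vanish outside their natural ranges, so the nonzero terms require simultaneously $0\le l\le i-1$ (from the first factor) and $0\le m=j-1-l\le n-i$ (from the second). The constraint $m\ge 0$ gives $l\le j-1$, while $m\le n-i$ gives $l\ge i+j-n-1$; combining these with $0\le l\le i-1$ produces the lower limit $\max(0,i+j-n-1)$ and the upper limit $\min(i-1,j-1)$, matching the stated formula. I expect no genuine obstacle: the argument is a single binomial expansion, and the only place demanding care is the bookkeeping of these limits together with the verification that the $d$-exponent stays nonnegative over the admissible range of $l$, which holds because $n+1-i-j+l\ge 0$ is equivalent to $l\ge i+j-n-1$, precisely the lower bound already imposed.
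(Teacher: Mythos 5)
Your proof is correct: clearing the denominator $(cx+d)^{n-1}$ in the defining relation (\ref{A}) shows the $i$-th row of $A$ consists of the coefficients of $(ax+b)^{i-1}(cx+d)^{n-i}$, and the binomial expansion with the index bookkeeping you carried out gives exactly the stated formula (one can check it reproduces the displayed $n=3,4$ matrices). The paper omits this proof as ``straightforward,'' and your argument is precisely the intended one, with the ranges of $l$ and the exponent of $d$ verified carefully.
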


The proof is straightforward and omitted.
When $n=3$ and $n=4$,
\begin{equation}
\label{A3}
 A(a,b,c,d) =
 \begin{pmatrix}
  d^2 & 2cd & c^2 \\ bd & bc+ad & ac \\ b^2 & 2ab & a^2 \end{pmatrix},
\ \ %
 \begin{pmatrix}
  d^3 & 3 c d^2 & 3 c^2 d & c^3 \\
  b d^2 & 2 b c d + a d^2 & b c^2 + 2 a c d & a c^2 \\
  b^2 d & b^2 c + 2 a b d & 2 a b c + a^2 d & a^2 c \\
  b^3 & 3 a b^2 & 3 a^2 b & a^3 \end{pmatrix},
\end{equation}
respectively.

The set
\[
 \mathcal{A}=\{ A(a,b,c,d) \mid ad-bc \ne 0 \}
\]
is a representation of general linear group $\mathrm{GL}(2,\R)$ and hence forms a group \citep{Gross-Holman80}.
The proof of the proposition below is straightforward and omitted.
\begin{prop}
Set $\mathcal{A}$ forms a matrix algebraic group.
The identity matrix is $A(1,0,0,\allowbreak 1)=(-1)^{n-1}A(-1,0,0,-1)=I_n$, and
the inverse of $A=A(a,b,c,d)$ is given by $(ad-bc)^{-(n-1)} A(d,-b,-c,a)$.
\end{prop}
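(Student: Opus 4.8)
The plan is to reduce the entire statement to a single homomorphism property and then read off the identity and the inverse from it. Writing each parameter tuple as the $2\times 2$ matrix $g=\begin{pmatrix} a & b \\ c & d\end{pmatrix}$, the composition law (\ref{group_product}) is precisely matrix multiplication of the $g$'s, so the tuples with $ad-bc\ne 0$ form a copy of $\mathrm{GL}(2,\R)$. The first and central step is to prove that $g\mapsto A(g)$ is a homomorphism, namely
\[
 A(g')A(g)=A(g'g).
\]
To obtain this I would apply the defining relation (\ref{A}) twice. Putting $y=\varphi(x;g)$, expand $f_P\bigl(\varphi(y;g')\bigr)$ by (\ref{A}) and then substitute $f_P(y)=\lambda(x;g)A(g)f_P(x)$; compare the result with the one-step expansion of $f_P\bigl(\varphi(x;g'g)\bigr)$ coming from (\ref{group_product}). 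The scalar prefactors agree by the cocycle identity (\ref{product-lambda}), and since $\{f_P(x)\mid x\in\R\}$ spans $\R^n$, the two matrices multiplying $f_P(x)$ must coincide, which is the displayed identity.

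Given this homomorphism, the group structure of $\mathcal{A}$ is immediate (closure and associativity are inherited from $\mathrm{GL}(2,\R)$, the representation statement of \cite{Gross-Holman80}). For the identity element, substituting $g=I_2$ into (\ref{A}) gives $\varphi(x;1,0,0,1)=x$ and $\lambda(x;1,0,0,1)=1$, so $A(1,0,0,1)=I_n$. The second equality $(-1)^{n-1}A(-1,0,0,-1)=I_n$ follows from homogeneity: inspecting the entry formula of the preceding proposition, every monomial in $(A)_{i,j}$ has total degree $n-1$ in $(a,b,c,d)$, hence $A(\kappa a,\kappa b,\kappa c,\kappa d)=\kappa^{n-1}A(a,b,c,d)$, and taking $\kappa=-1$ yields $A(-1,0,0,-1)=(-1)^{n-1}I_n$.

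For the inverse I would combine the homomorphism with this homogeneity. Since $g^{-1}=(ad-bc)^{-1}\begin{pmatrix} d & -b \\ -c & a\end{pmatrix}$, the homomorphism gives $A(g)^{-1}=A(g^{-1})$, and homogeneity with $\kappa=(ad-bc)^{-1}$ converts this into
\[
 A(g)^{-1}=(ad-bc)^{-(n-1)}A(d,-b,-c,a),
\]
as claimed. Finally, $\mathcal{A}$ is a matrix algebraic group because it is the image of the polynomial representation $\rho:g\mapsto A(g)$ of the linear algebraic group $\mathrm{GL}(2,\R)$, and the image of a morphism of linear algebraic groups is a Zariski-closed subgroup of $\mathrm{GL}(n,\R)$.

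The step I expect to be the main obstacle is the homomorphism identity $A(g')A(g)=A(g'g)$: one must align the two expansions of $f_P$, keep the order of composition straight so the factors fall on the correct sides, and use the spanning property to cancel $f_P(x)$. Once that identity and the degree-$(n-1)$ homogeneity are in place, the identity element, the inverse formula, and the algebraic-group claim all follow directly.
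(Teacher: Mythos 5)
Your reduction of everything to the homomorphism identity $A(g'g)=A(g')A(g)$ plus degree-$(n-1)$ homogeneity is correct, and it is essentially the ``straightforward'' argument the paper omits: expanding $f_P\bigl(\varphi(\varphi(x;g);g')\bigr)$ in two ways, invoking (\ref{product-lambda}) for the scalar factors and the Vandermonde/spanning property of $\{f_P(x)\mid x\in\R\}$ to cancel $f_P(x)$, gives the homomorphism; setting $g=I_2$ gives $A(1,0,0,1)=I_n$; and $g^{-1}=(ad-bc)^{-1}(d,-b,-c,a)$ together with $A(\kappa g)=\kappa^{n-1}A(g)$ gives the stated inverse. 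All of that is sound, including the orientation of the composition (it is a genuine homomorphism, not an anti-homomorphism, because (\ref{group_product}) matches $g'g$ in that order).

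The one step that does not survive scrutiny is your last sentence. ``The image of a morphism of linear algebraic groups is a Zariski-closed subgroup'' is Chevalley's theorem over an algebraically closed field; over $\R$ it is false (the squaring map on $\R^\times$ has image $(0,\infty)$), and it fails for $\mathcal{A}$ itself when $n$ is odd. Indeed $\mathcal{A}\supset\{tI_n \mid t>0\}$ because $A(\kappa I_2)=\kappa^{n-1}I_n$, so any polynomial vanishing on $\mathcal{A}$ vanishes on the whole line $\{tI_n\mid t\in\R\}$, in particular at $-I_n$; yet $-I_n\notin\mathcal{A}$ for odd $n$, since the $(1,1)$ entry of every $A(a,b,c,d)$ is $d^{n-1}\ge 0$. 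Hence $\mathcal{A}$ is not Zariski closed in $\mathrm{GL}(n,\R)$, and the phrase ``matrix algebraic group'' must be read in the weaker sense the paper intends (the group of real matrices arising from a rational representation of $\mathrm{GL}(2,\R)$, following \cite{Gross-Holman80}), not as a Zariski-closed subgroup. This is a peripheral flaw: the substantive claims of the proposition --- closure under multiplication, the identity element, and the inverse formula --- are exactly what your homomorphism-plus-homogeneity argument establishes.
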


\begin{prop}
For $A=A(a,b,c,d)\in \mathcal{A}$, $\det(A)=(ad-bc)^{n(n-1)/2}$.
\end{prop}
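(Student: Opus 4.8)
The plan is to exploit the defining relation (\ref{A}) directly by evaluating it at several points and reading off the determinant from a Vandermonde identity, rather than appealing to the representation theory of $\mathrm{GL}(2,\R)$.

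First I would fix $n$ generic distinct points $x_1,\ldots,x_n\in\R$ with $c x_k+d\ne 0$ for all $k$, and stack the $n$ instances of (\ref{A}) as columns. Writing $V(y_1,\ldots,y_n)=\bigl(f_P(y_1),\ldots,f_P(y_n)\bigr)$ for the Vandermonde matrix whose columns are the polynomial basis vectors, and abbreviating $\varphi(x)=\varphi(x;a,b,c,d)$, $\lambda(x)=\lambda(x;a,b,c,d)$, relation (\ref{A}) becomes the factorization
\[
 V\bigl(\varphi(x_1),\ldots,\varphi(x_n)\bigr)
 = A\, V(x_1,\ldots,x_n)\, \diag\bigl(\lambda(x_1),\ldots,\lambda(x_n)\bigr).
\]
Taking determinants and using $\det V(y_1,\ldots,y_n)=\prod_{i<j}(y_j-y_i)$ gives
\[
 \prod_{i<j}\bigl(\varphi(x_j)-\varphi(x_i)\bigr)
 = \det(A)\,\prod_{i<j}(x_j-x_i)\,\prod_{k=1}^n\lambda(x_k).
\]

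The key step is the elementary identity
\[
 \varphi(x_j)-\varphi(x_i)
 = \frac{(a x_j+b)(c x_i+d)-(a x_i+b)(c x_j+d)}{(c x_j+d)(c x_i+d)}
 = \frac{(ad-bc)(x_j-x_i)}{(c x_j+d)(c x_i+d)},
\]
where the numerator collapses because the $x_i x_j$ and constant terms cancel. Substituting this into the left-hand product yields a factor $(ad-bc)$ for each of the $n(n-1)/2$ pairs $i<j$, a factor $\prod_{i<j}(x_j-x_i)$, and a denominator $\prod_{i<j}(c x_i+d)(c x_j+d)=\prod_{k=1}^n(c x_k+d)^{\,n-1}$, the last equality holding because each index $k$ occurs in exactly $n-1$ pairs. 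Since $\lambda(x_k)=(c x_k+d)^{-(n-1)}$, this denominator is precisely $\prod_{k=1}^n\lambda(x_k)$. Hence the left-hand side equals $(ad-bc)^{n(n-1)/2}\prod_{i<j}(x_j-x_i)\prod_{k=1}^n\lambda(x_k)$.

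Comparing the two expressions and cancelling the common nonzero factor $\prod_{i<j}(x_j-x_i)\prod_{k=1}^n\lambda(x_k)$ (nonzero by the genericity of the $x_k$) leaves $\det(A)=(ad-bc)^{n(n-1)/2}$; as $\det(A)$ is a polynomial in $a,b,c,d$ independent of the chosen points, an identity valid on a nonempty open set holds identically. I do not expect a genuine obstacle here: the only point requiring care is the bookkeeping that reassembles the denominators into $\prod_k\lambda(x_k)$ and confirms the exponent ${n \choose 2}=n(n-1)/2$, which is exactly the pair count. As a sanity check one could instead observe that, since $\mathcal{A}\cong\mathrm{GL}(2,\R)$ is a group and each entry of $A$ is homogeneous of degree $n-1$, the map $A\mapsto\det(A)$ is a polynomial character of $\mathrm{GL}(2,\R)$, hence a power $(ad-bc)^k$, with $2k=n(n-1)$ forced by homogeneity.
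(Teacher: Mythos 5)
Your proof is correct and is essentially the paper's own argument: evaluate the defining relation (\ref{A}) at $n$ distinct points, take Vandermonde determinants, and use the identity $\varphi(x_j)-\varphi(x_i)=(ad-bc)(x_j-x_i)/\bigl((cx_j+d)(cx_i+d)\bigr)$ to extract the factor $(ad-bc)^{n(n-1)/2}$. (Only a phrasing slip: the denominator $\prod_k(cx_k+d)^{n-1}$ is the \emph{reciprocal} of $\prod_k\lambda(x_k)$, not equal to it, though your final expression and cancellation are correct.)
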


\begin{proof}
For different $x_1,\ldots,x_n$, we have
\[
 A (f_P(x_1),\ldots,f_P(x_n)) = (f_P(y_1),\ldots,f_P(y_n)) \mathrm{diag}((cx_i+d)^{n-1}), \quad y_i=\frac{a x_i+b}{c x_i+d}.
\]
By taking determinants,
\begin{equation}
\label{vandelmonde}
 \det(A) \times (-1)^n \prod_{1\le i<j\le n}(x_i-x_j) =
 (-1)^n \prod_{1\le i<j\le n}(y_i-y_j) \prod_{1\le i\le n} (cx_i+d)^{n-1}.
\end{equation}
Substituting
\begin{align*}
 \prod_{1\le i<j\le n}(y_i-y_j)
&= \prod_{1\le i<j\le n}\Bigl(\frac{ax_i+b}{cx_i+d}-\frac{ax_j+b}{cx_j+d}\Bigr)= \prod_{1\le i<j\le n}\frac{(ad-bc)(x_i-x_j)}{(cx_i+d)(cx_j+d)} \\
&= (ad-bc)^{n(n-1)/2} \frac{\prod_{1\le i<j\le n}(x_i-x_j)}{\prod_{1\le i\le n}(cx_i+d)^{n-1}}
\end{align*}
into (\ref{vandelmonde}) yields $\det(A)=(ad-bc)^{n(n-1)/2}$.
\end{proof}

\begin{prop}
\label{prop:decomposition}
The M\"obius group is reparameterized as
\begin{align*}
\{\varphi(\cdot;a,b,c,d) \mid ad-bc\ne 0 \}
= \bigl\{\varphi(\cdot;q,r,0,1)\circ\varphi(\cdot;s,-t,t,s) \mid q\ne 0,\ s^2+t^2=1 \bigr\} \\
\sqcup \bigl\{\varphi(\cdot;q,r,0,1)\circ\varphi(\cdot;-s,t,t,s) \mid q\ne 0,\ s^2+t^2=1 \bigr\},
\end{align*}
where $\sqcup$ is the disjoint union.
Group $\mathcal{A}$ is reparameterized as
\begin{align*}
\mathcal{A}=
& \bigl\{ k A(q,r,0,1) A(s,-t,t,s) \mid k>0,\ q\ne 0,\ s^2+t^2=1 \bigr\} \\
& \sqcup \bigl\{ k A(q,r,0,1) A(-s,t,t,s) \mid k>0,\ q\ne 0,\ s^2+t^2=1 \bigr\}.
\end{align*}
$\det A(q,r,0,1) = q^{n(n-1)/2}$ and $\det A(\pm s,\mp t,t,s) = \pm 1$ for $s^2+t^2=1$.
\end{prop}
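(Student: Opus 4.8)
The plan is to transfer both assertions to a single factorization of $2\times 2$ matrices and then check disjointness and the determinant formulas. By the product rule (\ref{group_product}), composition of M\"obius maps corresponds to multiplication of the associated matrices $\begin{pmatrix} a & b \\ c & d\end{pmatrix}$, so $\varphi(\cdot;q,r,0,1)\circ\varphi(\cdot;s,-t,t,s)$ corresponds to the product $\begin{pmatrix} q & r \\ 0 & 1\end{pmatrix}\begin{pmatrix} s & -t \\ t & s\end{pmatrix}$, and likewise the reflection factor corresponds to $\begin{pmatrix} -s & t \\ t & s\end{pmatrix}$. Since $\mathcal{A}$ is a representation of $\mathrm{GL}(2,\R)$, the map $A$ carries matrix products to matrix products, and by (\ref{A}) (the explicit entries of $A$) it is homogeneous of degree $n-1$, so the positive scalar $k$ in the $\mathcal{A}$-statement corresponds exactly to a positive scalar multiple $k_0 I_2$ of the underlying $2\times 2$ matrix, with $k=k_0^{\,n-1}$. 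Hence both the M\"obius statement (in $\mathrm{PGL}(2,\R)$, where the scalar is quotiented away) and the $\mathcal{A}$ statement reduce to the single claim: every $g=\begin{pmatrix} a & b \\ c & d\end{pmatrix}$ with $ad-bc\ne 0$ can be written as $g=k_0\begin{pmatrix} q & r \\ 0 & 1\end{pmatrix}O$ with $k_0>0$, $q\ne 0$, and $O$ orthogonal of the form $\begin{pmatrix} s & -t \\ t & s\end{pmatrix}$ or $\begin{pmatrix} -s & t \\ t & s\end{pmatrix}$, $s^2+t^2=1$.

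First I would produce this factorization by a Gram--Schmidt ($QR$)-type argument applied to the \emph{bottom} row of $g$. As $g$ is nonsingular, $(c,d)\ne(0,0)$; set $k_0=\sqrt{c^2+d^2}>0$ and $(t,s)=(c,d)/k_0$, so $s^2+t^2=1$. Right-multiplying by $O^{-1}$ for the rotation $O=\begin{pmatrix} s & -t \\ t & s\end{pmatrix}$ makes the bottom row of $gO^{-1}$ equal to $(cs-dt,\ ct+ds)=(0,k_0)$, so $gO^{-1}=k_0\begin{pmatrix} q & r \\ 0 & 1\end{pmatrix}$ with $q=(as-bt)/k_0$ and $r=(at+bs)/k_0$. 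Substituting $(t,s)=(c,d)/k_0$ yields the key identity $q=(ad-bc)/k_0^{\,2}$, which is nonzero. Repeating with the reflection $O=\begin{pmatrix} -s & t \\ t & s\end{pmatrix}$ (which is its own inverse) annihilates the bottom-left entry in the same way and gives $q=-(ad-bc)/k_0^{\,2}$. Thus every $g$ admits such a factorization, so the two families together cover the whole group.

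The point that must be handled carefully is the \emph{disjointness} of the union. The two orthogonal factors lie in the distinct cosets $\mathrm{SO}(2)$ and $\mathrm{O}(2)\setminus\mathrm{SO}(2)$, and the computation above shows $q=(ad-bc)/k_0^{\,2}$ in the rotation case and $q=-(ad-bc)/k_0^{\,2}$ in the reflection case. Hence, under the canonical normalization that selects the factorization with $q>0$, the rotation family is exactly $\{\det g>0\}$ and the reflection family is exactly $\{\det g<0\}$; these partition the group, which is the content of the disjoint union. Finally, the determinant formulas are immediate from the earlier identity $\det A(a,b,c,d)=(ad-bc)^{n(n-1)/2}$: putting $(a,b,c,d)=(q,r,0,1)$ gives $ad-bc=q$ and hence $\det A(q,r,0,1)=q^{n(n-1)/2}$, while $(a,b,c,d)=(\pm s,\mp t,t,s)$ gives $ad-bc=\pm(s^2+t^2)=\pm1$ and hence $\det A(\pm s,\mp t,t,s)=(\pm1)^{n(n-1)/2}$. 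I expect the only real obstacle to be bookkeeping: keeping the sign conventions consistent across the composition rule, the homogeneity of $A$, and the coset of $O$, so that the claimed families (and not their sign-flipped variants) come out, and pinning the normalization that makes the union genuinely disjoint.
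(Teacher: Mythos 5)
Your proof is correct, and its core coincides with the paper's: the Gram--Schmidt step on the bottom row of $g$ produces exactly the identity the paper writes down and verifies directly, namely
\[
 \varphi(x;a,b,c,d)
 = \varphi\Bigl(\varphi(x;\pm d,\mp c,c,d); \pm\tfrac{ad - bc}{c^2 + d^2}, \tfrac{ac + bd}{c^2 + d^2}, 0, 1\Bigr),
\]
with the same parameter values $k=(c^2+d^2)^{(n-1)/2}$, $q=\pm(ad-bc)/(c^2+d^2)$, $r=(ac+bd)/(c^2+d^2)$, $(s,t)=(d,c)/\sqrt{c^2+d^2}$; likewise, your reduction of the $\mathcal{A}$-statement to the $2\times 2$ level via the homomorphism property and degree-$(n-1)$ homogeneity of $A$ is exactly how the paper passes between its two displays. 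So this is not a genuinely different route, just a derivation of the factorization rather than a verification of it.

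Where you go beyond the paper is the disjointness, which the paper's proof does not address at all, and your instinct that a normalization must be pinned down is exactly right --- in fact it exposes an inaccuracy in the statement itself. With $q\ne 0$ unrestricted, the two families are not merely overlapping but equal: by the composition rule,
\[
 \varphi(\cdot;q,r,0,1)\circ\varphi(\cdot;s,-t,t,s)
 = \varphi(\cdot;-q,r,0,1)\circ\varphi(\cdot;-s,t,t,s),
\]
both sides being $\varphi(\cdot;qs+rt,\,-qt+rs,\,t,\,s)$, and the same identity holds for the $A$-matrices since $A$ preserves products. This is already visible in the paper's own proof, where the two sign choices $\pm$ factor the \emph{same} element both ways. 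The disjoint-union claim only becomes true under your convention $q>0$, with the rotation family then equal to $\{\det>0\}$ and the reflection family to $\{\det<0\}$; your argument establishes that corrected statement. One further small point: your computation gives $\det A(-s,t,t,s)=(-1)^{n(n-1)/2}$, which is the correct general value; it equals the claimed $-1$ only when $n(n-1)/2$ is odd, i.e.\ $n\equiv 2,3 \pmod 4$ (true for $n=3$, the case the paper uses later, but false for $n=4$, where it is $+1$). You should state your general formula and flag the discrepancy rather than silently reproduce the claimed $\pm 1$.
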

\begin{proof}
We have the following relations:
\[
 \varphi(x;a,b,c,d)
 = \varphi\Bigl(\varphi(x;\pm d,\mp c,c,d); \pm\frac{ad - bc}{c^2 + d^2}, \frac{ac + bd}{c^2 + d^2}, 0, 1\Bigr)
\]
and
\[
A(a,b,c,d)
 = (c^2+d^2)^{(n-1)/2} A\Bigl(\pm\frac{ad-bc}{c^2+d^2},\frac{ac+bd}{c^2+d^2},0,1\Bigr) \frac{A(\pm d,\mp c,c,d)}{(c^2+d^2)^{(n-1)/2}}.
\]
The results in the proposition follow by letting
$k=(c^2+d^2)^{(n-1)/2}$,
\[
q=\frac{ad-bc}{c^2+d^2}, \quad r=\frac{ac+bd}{c^2+d^2}, \quad s=\frac{d}{\sqrt{c^2+d^2}},\quad t=\frac{c}{\sqrt{c^2+d^2}}.
\]
\end{proof}

The sets of transformations
$\{\varphi(\cdot;\pm s,\mp t,t,s) \mid s^2+t^2=1 \}$ and
$\{\varphi(\cdot;q,r,0,1) \mid q\ne 0 \}$
form subgroups of the M\"obius group,
which are isomorphic to the orthogonal group $\mathrm{O}(2,\R)$ and the affine group acting on $\R$, respectively.

\begin{thm}
\label{thm:action}
Let $A\in \mathcal{A}$ and $M\in\mathcal{M}_P$ be $n\times n$ matrices.
Then, $A M A^\top\in\mathcal{M}_P$.
Moreover,
\[
 A \mathcal{M}_P A^\top =
 \bigl\{ A M A^\top \mid M \in \mathcal{M}_P \bigr\} = \mathcal{M}_P.
\]
That is, group $\mathcal{A}$ acts on the moment cone $\mathcal{M}_P$.
\end{thm}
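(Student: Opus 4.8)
The plan is to realize the conjugation $M\mapsto A M A^\top$ as a reweighting-and-pushforward operation on the underlying measures, so that its image is again the moment matrix of a measure in $\mathcal{P}$. First I would dispose of positive definiteness, which is purely linear-algebraic: since $A=A(a,b,c,d)\in\mathcal{A}$ has $\det A=(ad-bc)^{n(n-1)/2}\ne 0$, the matrix $A$ is nonsingular, so $M\succ 0$ immediately gives $A M A^\top\succ 0$. The only genuine content is therefore to exhibit a measure $\widetilde\rho\in\mathcal{P}$ with $A M A^\top=\int_{-\infty}^{\infty} f_P(y) f_P(y)^\top\,\dd\widetilde\rho(y)$.

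To this end I would rewrite the defining relation (\ref{A}) as $A f_P(x)=(cx+d)^{n-1} f_P(\varphi(x))$, where $\varphi(x)=\varphi(x;a,b,c,d)$. Starting from a representation $M=\int f_P(x) f_P(x)^\top\,\dd\rho(x)$ with $\rho\in\mathcal{P}$ and pulling $A$ inside the integral yields
\[
 A M A^\top = \int_{-\infty}^{\infty} (cx+d)^{2(n-1)}\, f_P(\varphi(x)) f_P(\varphi(x))^\top\,\dd\rho(x).
\]
I would then define $\widetilde\rho$ to be the pushforward under $\varphi$ of the weighted measure $\dd\nu(x)=(cx+d)^{2(n-1)}\,\dd\rho(x)$, so that the substitution $y=\varphi(x)$ transforms the display precisely into $\int f_P(y) f_P(y)^\top\,\dd\widetilde\rho(y)$.

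The crux is verifying that $\widetilde\rho$ indeed lies in $\mathcal{P}$, i.e.\ that it is a nonnegative finite measure on $\R$. Nonnegativity is immediate since the weight $(cx+d)^{2(n-1)}$ is an even power, hence $\ge 0$. Finiteness is exactly where the hypothesis $M\in\mathcal{M}_P$ is used: the total mass of $\widetilde\rho$ equals $\int (cx+d)^{2(n-1)}\,\dd\rho(x)$, and expanding the polynomial $(cx+d)^{2(n-1)}$ exhibits this as a finite linear combination of the moments $\int x^k\,\dd\rho(x)$ with $k\le 2(n-1)$, all of which are finite because they are entries of the finite matrix $M$. I expect the one delicate point to be the behaviour at the pole $x=-d/c$ (when $c\ne 0$), which $\varphi$ sends to $\infty$: there the weight $(cx+d)^{2(n-1)}$ vanishes to high order, so no mass is transported to infinity and $\widetilde\rho$ lives on $\R$; and since $\rho$ carries no mass at infinity, none is created at the exceptional value $a/c$ either. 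This confirms $\widetilde\rho\in\mathcal{P}$ and hence $A M A^\top\in\mathcal{M}_P$, establishing the inclusion $A\mathcal{M}_P A^\top\subseteq\mathcal{M}_P$.

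Finally, to promote this inclusion to the equality $A\mathcal{M}_P A^\top=\mathcal{M}_P$, I would invoke the group structure of $\mathcal{A}$ proved earlier. Since $A^{-1}=(ad-bc)^{-(n-1)}A(d,-b,-c,a)\in\mathcal{A}$, applying the inclusion just shown to $A^{-1}$ gives $A^{-1}\mathcal{M}_P (A^{-1})^\top\subseteq\mathcal{M}_P$, which is equivalent to $\mathcal{M}_P\subseteq A\mathcal{M}_P A^\top$; the two inclusions together force equality. Because $A,A'\in\mathcal{A}$ satisfy $(AA')\mathcal{M}_P(AA')^\top=A\bigl(A'\mathcal{M}_P A'^\top\bigr)A^\top$ and $I_n\in\mathcal{A}$ acts trivially, this is a bona fide action of the group $\mathcal{A}$ on the moment cone $\mathcal{M}_P$.
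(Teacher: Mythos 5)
Your overall route is the same as the paper's: realize $M\mapsto AMA^\top$ as a transformation of representing measures---support points pushed forward by $\varphi$, weights multiplied by $(cx+d)^{2(n-1)}=\lambda(x;a,b,c,d)^{-2}$---and then upgrade the inclusion $A\mathcal{M}_PA^\top\subseteq\mathcal{M}_P$ to equality using $A^{-1}\in\mathcal{A}$. The paper does this for a finitely supported representation $M=\sum_i w_i f_P(x_i)f_P(x_i)^\top$, while you work with a general $\rho\in\mathcal{P}$ via a weighted pushforward; your treatment of positive definiteness and of finiteness of the total mass (reduction to moments of $\rho$ of order $\le 2(n-1)$) is correct, as is the group-theoretic endgame.

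The gap is exactly at the point you singled out, and the argument you give there is backwards. The identity $Af_P(x)\bigl(Af_P(x)\bigr)^\top=(cx+d)^{2(n-1)}f_P(\varphi(x))f_P(\varphi(x))^\top$ holds only for $x\ne -d/c$: the $k$-th entry of $Af_P(x)$ is $(ax+b)^{k-1}(cx+d)^{n-k}$, so $Af_P(-d/c)=\bigl((bc-ad)/c\bigr)^{n-1}e_n\ne 0$, where $e_n=(0,\ldots,0,1)^\top$. Hence if $w:=\rho(\{-d/c\})>0$, the matrix $AMA^\top$ contains the nonzero contribution $w\bigl((ad-bc)/c\bigr)^{2(n-1)}e_ne_n^\top$, while your $\widetilde\rho$ gives this atom weight zero and silently discards it; one gets $\int f_Pf_P^\top\,\dd\widetilde\rho=AMA^\top-w\bigl((ad-bc)/c\bigr)^{2(n-1)}e_ne_n^\top\ne AMA^\top$. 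So ``no mass is transported to infinity'' is the failure, not the resolution: $AMA^\top$ genuinely has a component corresponding to the mass that $\varphi$ sends to infinity, and in the extreme case where $\rho$ consists of $n$ atoms one of which sits at the pole, the moment matrix of $\widetilde\rho$ is even singular, hence represents no element of $\mathcal{M}_P$ at all. The conclusion is still true and the repair is short: either (a) first replace $\rho$ by a finitely supported representing measure avoiding $-d/c$, which follows from the canonical representations of \cite{Karlin-Studden66} quoted in the paper together with a counting argument (through any given point there is only one canonical representation, and a canonical representation has at most $n$ support points, so among $n+1$ base points some canonical representation misses the pole); or (b) keep the atom and write $AMA^\top=\int f_Pf_P^\top\,\dd\widetilde\rho+w'e_ne_n^\top$ with $w'\ge 0$, note that both summands are Hankel and the sum is positive definite, and invoke Proposition \ref{prop:hankel}. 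In fairness, the paper's own proof is silent on the same point (its discrete support is implicitly assumed to avoid $-d/c$), but since your write-up makes the pole the crux, that step needs to be fixed rather than waved at.
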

\begin{proof}
Suppose that $M=\sum_i f_P(x_i) f_P(x_i)^\top w_i$.
Let
$y_i = \varphi(x_i;a,b,c,d)$.
Note that $\lambda(y;d,-b,-c,a) \lambda(x;a,b,c,d)= (ad-bc)^{-(n-1)}$.
Then,
\begin{align*}
A M A^\top
&= \sum_i A f_P(x_i) (A f_P(x_i))^\top w_i \\
&= \sum_i f_P(y_i) f_P(y_i)^\top \lambda(x_i;a,b,c,d)^{-2} w_i \\
&= \sum_i f_P(y_i) f_P(y_i)^\top (ad-bc)^{2(n-1)} \lambda(y_i;d,-b,-c,a)^2 w_i \\
&= \sum_i f_P(y_i) f_P(y_i)^\top v_i \in \mathcal{M}_P,
\end{align*}
where
\begin{equation}
\label{vw}
v_i = \lambda(x_i;a,b,c,d)^{-2} w_i = (ad-bc)^{2(n-1)} \lambda(y_i;d-b,-c,a)^2 w_i.
\end{equation}
Therefore, $A \mathcal{M}_P A^\top \subset \mathcal{M}_P$.
Because $\mathcal{A}$ is a group, $A \mathcal{M}_P A^\top = \mathcal{M}_P$.
\end{proof}

The M\"obius group action on the polynomial basis $f(x)$ has been introduced by (\ref{A}).
Similarly, we define the M\"obius group action on the variance function $\sigma^2(x)=Q(x)^{n-1}$.
This provides a parameterization for the variance function.

Using $\sigma_P^2(x)=(1+x^2)^{n-1}$ in (\ref{sigmaP}), for $ad-bc\ne 0$, we define
\begin{equation}
\label{sigmaPa0}
\sigma_P^2(\varphi(x;a,b,c,d)) = \sigma_P^2(x;a,b,c,d) \lambda(x;a,b,c,d)^2,
\end{equation}
or
\begin{equation}
\label{sigmaPa}
 \sigma_P^2(x;a,b,c,d)
 = \bigl\{ (b^2+d^2) + 2(a b+c d)x + (a^2+c^2) x^2 \bigr\}^{n-1}.
\end{equation}
Note that
$\sigma_P^2(x)=\sigma_P^2(x;1,0,0,1)$.
This is always positive because of $ad-bc\ne 0$.
For
\[
 \varphi(\cdot;a',b',c',d') = \varphi(\cdot;a_0,b_0,c_0,d_0)\circ\varphi(\cdot;a,b,c,d),
\]
as well as (\ref{product-lambda}), we have
\begin{equation}
\label{sigma-mobius}
 \sigma_P^2(\varphi(x;a,b,c,d);a_0,b_0,c_0,d_0) =
 \sigma_P^2(x;a',b',c',d') \lambda(x;a,b,c,d)^2.
\end{equation}

The parameterization (\ref{sigmaPa}) with $(a,b,c,d)$ is redundant, since $Q(x)$ has only three parameters.
The lemma below shows that the stabilizer keeping the variance $\sigma_P^2(\cdot;a,b,c,d)$ invariant is the orthogonal subgroup with dimension one.

\begin{lem}
$\sigma_P^2(\cdot;a,b,c,d)=\sigma_P^2(\cdot;a',b',c',d')$ if and only if there exist $s,t$, $s^2+t^2=1$ such that
\[
 \varphi(\cdot;a',b',c',d')=\varphi(\cdot;\pm s,\mp t,t,s)\circ\varphi(\cdot;a,b,c,d).
\]
\end{lem}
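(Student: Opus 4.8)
The plan is to reduce the equality of variance functions to an equality of $2\times 2$ Gram matrices, and then to recognize the resulting stabilizer as the orthogonal group $\mathrm{O}(2,\R)$ realized inside the M\"obius group. Write $G=\begin{pmatrix} a & b \\ c & d\end{pmatrix}$ and $G'=\begin{pmatrix} a' & b' \\ c' & d'\end{pmatrix}$, both nonsingular since $ad-bc\neq 0$. Because $\varphi(x;a,b,c,d)=(ax+b)/(cx+d)$, a direct expansion of $1+\varphi(x;a,b,c,d)^2$ shows that the bracketed quadratic in (\ref{sigmaPa}) is
\[
 (b^2+d^2)+2(ab+cd)x+(a^2+c^2)x^2=\begin{pmatrix} x \\ 1\end{pmatrix}^\top G^\top G \begin{pmatrix} x \\ 1\end{pmatrix},
\]
whose three coefficients are exactly the entries of the symmetric matrix $G^\top G$. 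Since this quadratic is strictly positive and is raised to the fixed power $n-1$ (with $n\ge 2$), two variance functions agree if and only if the underlying quadratics agree, i.e. if and only if $G^\top G=G'^\top G'$.

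Next I would exploit nonsingularity. Setting $O:=G'G^{-1}$, the identity $G^\top G=G'^\top G'$ is equivalent to $O^\top O=(G^{-1})^\top G'^\top G' G^{-1}=(G^{-1})^\top (G^\top G) G^{-1}=I$, so $\sigma_P^2(\cdot;a,b,c,d)=\sigma_P^2(\cdot;a',b',c',d')$ holds precisely when $G'=OG$ for some $O\in\mathrm{O}(2,\R)$. The remaining linear-algebra step is to parametrize $\mathrm{O}(2,\R)$ so as to match the group element in the statement: every rotation is $\begin{pmatrix} s & -t \\ t & s\end{pmatrix}$ with $s^2+t^2=1$ (determinant $+1$), and every reflection $\begin{pmatrix}\cos\phi & \sin\phi \\ \sin\phi & -\cos\phi\end{pmatrix}$ equals $\begin{pmatrix} -s & t \\ t & s\end{pmatrix}$ with $s=-\cos\phi$, $t=\sin\phi$ (determinant $-1$). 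Hence $\mathrm{O}(2,\R)=\bigl\{\begin{pmatrix}\pm s & \mp t \\ t & s\end{pmatrix}\mid s^2+t^2=1\bigr\}$, which is exactly the coefficient matrix of $\varphi(\cdot;\pm s,\mp t,t,s)$.

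Finally I would translate the matrix relation into the stated group relation. By the group product (\ref{group_product}), composition of M\"obius maps corresponds to multiplication of their coefficient matrices, so $G'=\begin{pmatrix}\pm s & \mp t \\ t & s\end{pmatrix}G$ is the coefficient form of $\varphi(\cdot;a',b',c',d')=\varphi(\cdot;\pm s,\mp t,t,s)\circ\varphi(\cdot;a,b,c,d)$. The converse implication is then immediate and reusable: left multiplication by an orthogonal $O$ preserves the Gram matrix, $(OG)^\top(OG)=G^\top G$, hence preserves the quadratic and therefore the variance function.

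The hard part will not be any single computation but the \emph{scaling subtlety} hidden in reading the right-hand side. Equality of M\"obius maps pins down the coefficient $4$-tuple only up to a nonzero scalar, whereas $\sigma_P^2(\cdot;a,b,c,d)$ genuinely depends on the scale (replacing $(a,b,c,d)$ by $k(a,b,c,d)$ multiplies the quadratic by $k^2$ and hence the variance by $k^{2(n-1)}$). The composition relation must therefore be interpreted as an equality of coefficient matrices $G'=OG$, where $O$ has determinant $\pm 1$ and contributes no scaling; this is precisely what keeps the equivalence exact in both directions. Two further points deserve care: the passage from the $(n-1)$-th power back to the quadratic requires $n\ge 2$ together with strict positivity of the quadratic (both ensured here), and the construction $O=G'G^{-1}$ relies on the nonsingularity guaranteed by $ad-bc\neq 0$.
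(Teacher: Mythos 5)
Your proof is correct, and it takes a genuinely different route from the paper's. The paper argues by stabilizer transport: a direct coefficient comparison first shows that $\sigma_P^2(\cdot;a_0,b_0,c_0,d_0)=\sigma_P^2(\cdot;1,0,0,1)$ iff $(a_0,b_0,c_0,d_0)=(\pm s,\mp t,t,s)$, and then the equivariance identity (\ref{sigma-mobius}) translates this computation at the identity to an arbitrary base tuple $(a,b,c,d)$, turning the claim into a left-coset statement. You instead factorize the quadratic in (\ref{sigmaPa}) as the Gram form $\begin{pmatrix} x & 1 \end{pmatrix} G^\top G \begin{pmatrix} x \\ 1 \end{pmatrix}$ with $G=\begin{pmatrix} a & b \\ c & d \end{pmatrix}$, strip the $(n-1)$-th power using positivity and $n\ge 2$, and invoke the linear-algebra fact that $G^\top G=G'^\top G'$ iff $G'=OG$ with $O\in\mathrm{O}(2,\R)$; identifying $\mathrm{O}(2,\R)$ with the coefficient matrices of $\varphi(\cdot;\pm s,\mp t,t,s)$ and composition with matrix multiplication via (\ref{group_product}) then settles both directions at once. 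What your route buys: it is self-contained (no appeal to (\ref{sigma-mobius})), and it makes structurally transparent why the stabilizer is a copy of $\mathrm{O}(2,\R)$ --- a fact the paper asserts around Proposition \ref{prop:decomposition} but whose verification it compresses into ``direct calculations.'' What the paper's route buys: it reuses the cocycle machinery already established and exhibits the transport-of-stabilizer pattern that recurs later (e.g., Theorem \ref{thm:stabilizer}). Your flag on the scaling subtlety is also exactly right, and it matches how the paper must be read: the composition on the right-hand side is an identity of coefficient tuples in the sense of (\ref{group_product}), not merely of M\"obius maps (map equality fixes the tuple only up to a scalar $\mu$, which would rescale the variance by $\mu^{2(n-1)}$), and the residual ambiguity $\mu=\pm 1$ is harmless because the family of matrices $\begin{pmatrix} \pm s & \mp t \\ t & s \end{pmatrix}$ is closed under negation.
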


\begin{proof}
By direct calculations,
\[
 \sigma_P^2(\cdot;a_0,b_0,c_0,d_0)=\sigma_P^2(\cdot;1,0,0,1) \Leftrightarrow (a_0,b_0,c_0,d_0)=(\pm s,\mp t,t,s).
\]
On the other hand,
``$\sigma_P^2(\cdot;a_0,b_0,c_0,d_0)=\sigma_P^2(\cdot;1,0,0,1)$'' $\Leftrightarrow$ ``$\sigma_P^2(\varphi(\cdot;a,b,c,d);a_0,b_0,c_0,d_0)\allowbreak =\sigma_P^2(\varphi(\cdot;a,b,c,d);1,0,0,1)$'' $\Leftrightarrow$
``$\sigma_P^2(x;a',b',c',d')=\sigma_P^2(x;a,b,c,d)$ with $\varphi(\cdot;a',b',c',d') = \varphi(\cdot;a_0,b_0,c_0,d_0)\circ\varphi(\cdot;a,b,c,d)$'' by (\ref{sigma-mobius}).
\end{proof}

\begin{rem}
When $n=2$, $|ad-bc|/\{\pi\sigma_P^2(x;a,b,c,d)\}$ is a probability density of Cauchy distribution family on $x\in\R$.
Noting that $\dot\varphi(x;a,b,c,d)=(ad-bc)\lambda(x;a,b,c,d)^2$, and that
$a'd'-b'c'=(ad-bc)(a_0 d_0-b_0 c_0)$,
(\ref{sigma-mobius}) reads
\[
 \frac{|a_0 d_0-b_0 c_0|}{\pi\sigma_P^2(\varphi(x;a,b,c,d);a_0,b_0,c_0,d_0)} |\dot\varphi(x;a,b,c,d)| \,\dd x =
 \frac{|a'd'-b'c'|}{\pi\sigma_P^2(x;a',b',c',d')} \dd x.
\]
This means that the Cauchy distribution family is closed under the M\"obius transform \citep{McCullagh96}.
See also \cite{Kato-McCullagh14} for Cauchy families in directional statistics.
\end{rem}

\subsection{Canonical parameterizations for information matrices}

As we have shown in Section \ref{subsec:volume}, the optimal design problem is optimization with respect to matrix $M$ over the set of information matrices $\mathcal{M}$.
Here, $M=\int_{\mathcal{X}} f(x) f(x)^\top \frac{1}{\sigma^2(x)} \dd \rho(x)\in\mathcal{M}$ and the design measure $\rho\in\mathcal{P}$ is one-to-many.
For the sake of optimization, we need to parameterize the set $\mathcal{M}$.

We first consider $\mathcal{M}_P$ in (\ref{MP}) for the polynomial regression, and then interpret the results in terms of $\mathcal{M}_F$ in (\ref{MF}) for the Fourier regression.

The structure of the moment cone $\mathcal{M}_P$ is well-studied in the context of the classical moment problem.
One canonical parameterization for $\mathcal{M}_P$ is given in Chapter II, Section 3 of \cite{Karlin-Studden66}.
The statement is summarized in Proposition 3.1 of \cite{Kato-Kuriki13}.
\begin{prop}
$M\in \mathcal{M}_P$ is uniquely represented with $2n-1$ parameters $(w_0,\ldots,w_{n-1},\allowbreak x_1,\ldots,x_{n-1})$ as
\begin{align}
\label{KS-MP}
 M = \sum_{i=1}^{n-1} w_i f_P(x_i) f_P(x_i)^\top + w_0 f_P(\pm\infty) f_P(\pm\infty)^\top, \ \ w_i>0, \\
 -\infty<x_1<\cdots<x_{n-1}<\infty,
\nonumber
\end{align}
where we let $f_P(\pm\infty)=(0,\ldots,0,1)^\top$.
\end{prop}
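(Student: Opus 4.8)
The plan is to exploit the Hankel structure of $M\in\mathcal{M}_P$: writing $m_k$ for the common value along the $k$-th antidiagonal, $M=(m_{i+j-2})_{1\le i,j\le n}$ is determined by the $2n-1$ moments $m_0,\ldots,m_{2n-2}$, which matches the asserted number of parameters. Since $f_P(\pm\infty)=(0,\ldots,0,1)^\top=:e_n$, the term $w_0 f_P(\pm\infty)f_P(\pm\infty)^\top=w_0 e_n e_n^\top$ perturbs only the bottom-right entry. The strategy is therefore to (i) determine $w_0$ as the unique mass at infinity that pushes $M$ onto the boundary of the cone, (ii) represent the resulting singular matrix by an $(n-1)$-atomic measure on finite points, and (iii) read off uniqueness from the rank condition.

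First I would fix $w_0$. Let $D_{n-1}>0$ denote the leading principal minor of order $n-1$ of $M$ and set $w_0=\det(M)/D_{n-1}$. Because $M\succ 0$ forces $\det(M)>0$ and $D_{n-1}>0$, we get $w_0>0$. By the matrix determinant lemma, $\det(M-w_0 e_n e_n^\top)=\det(M)-w_0 D_{n-1}=0$, while all leading principal minors of $\widetilde M:=M-w_0 e_n e_n^\top$ of order $\le n-1$ coincide with those of $M$ and hence stay positive. Thus $\widetilde M\succeq 0$ has rank exactly $n-1$, and its kernel is spanned by a single vector $v=(v_0,\ldots,v_{n-1})^\top$. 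Since the leading $(n-1)\times(n-1)$ block of $\widetilde M$ is positive definite, $v$ cannot have vanishing last coordinate, so the polynomial $P(x)=\sum_{k=0}^{n-1}v_k x^k$ has degree exactly $n-1$.

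Next I would produce the finite atoms. The key claim --- and the step I expect to be the main obstacle --- is that $P$ has $n-1$ distinct real roots $x_1<\cdots<x_{n-1}$ and that the associated weights are strictly positive. This is precisely the content of the classical theory of orthogonal polynomials and Gauss--Jacobi quadrature for positive definite Hankel forms: $P$ is, up to scale, the degree-$(n-1)$ orthogonal polynomial attached to the moment sequence, its zeros are the Gauss nodes, which are real, simple and lie in the convex hull of any representing support (hence finite), and the weights are the positive Christoffel numbers. Granting this, each $f_P(x_i)$ satisfies $f_P(x_i)^\top v=P(x_i)=0$, so $f_P(x_i)\perp v$ and hence $f_P(x_i)\in\mathrm{range}(\widetilde M)$, the orthogonal complement of $\ker\widetilde M=\mathrm{span}\{v\}$; as the $n-1$ vectors $f_P(x_i)$ are linearly independent (Vandermonde) and span the $(n-1)$-dimensional range, there are unique coefficients with $\widetilde M=\sum_{i=1}^{n-1}w_i f_P(x_i)f_P(x_i)^\top$, and the quadrature positivity gives $w_i>0$. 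Adding back $w_0 e_n e_n^\top$ recovers the representation (\ref{KS-MP}); I would invoke \cite{Karlin-Studden66} for this reduction.

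Finally, uniqueness is immediate from the rank argument. If $M=\sum_{i=1}^{n-1}w_i' f_P(x_i')f_P(x_i')^\top+w_0' e_n e_n^\top$ is any representation of the asserted form, then $M-w_0' e_n e_n^\top$ is a sum of $n-1$ rank-one matrices and hence singular, which forces $\det(M)-w_0' D_{n-1}=0$ and therefore $w_0'=w_0$. Consequently $M-w_0' e_n e_n^\top=\widetilde M$, whose kernel polynomial $P$ is determined up to scale; its roots fix $\{x_i'\}=\{x_i\}$, and the weights are then the unique solution of the nonsingular Vandermonde system. This shows that the $2n-1$ parameters $(w_0,\ldots,w_{n-1},x_1,\ldots,x_{n-1})$ are uniquely determined by $M$.
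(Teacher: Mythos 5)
Your proposal is correct in outline, but note that the paper itself gives no proof of this proposition: it is quoted as a classical fact about moment spaces, with a pointer to Chapter II, Section 3 of \cite{Karlin-Studden66} (summarized as Proposition 3.1 of \cite{Kato-Kuriki13}), where \eqref{KS-MP} appears as the principal representation of an interior point of the moment cone. So your argument is a genuine reconstruction rather than a paraphrase of the paper. What you give is essentially the flat-extension proof of the truncated Hamburger moment problem: subtract the unique mass at infinity $w_0=\det(M)/D_{n-1}$ that pushes $M$ onto the boundary of the cone (matrix determinant lemma plus the Schur-complement factorization, which also yields $\widetilde M\succeq 0$ of rank $n-1$); identify the kernel polynomial $P$ of the singular Hankel matrix $\widetilde M$ with the degree-$(n-1)$ orthogonal polynomial of the moment functional $L(x^k)=m_k$, $k\le 2n-3$; and invoke the classical facts that $P$ has $n-1$ real simple zeros and that the Christoffel numbers are positive. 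Your route buys a self-contained argument in which the uniqueness mechanism is completely transparent ($w_0$ is forced by singularity, the nodes are forced as zeros of the kernel polynomial, the weights by a nonsingular Vandermonde system); the paper's citation buys brevity and a framework (Karlin--Studden's principal representations) that uniformly covers the variants stated immediately afterwards (an arbitrary fixed atom $x_0$, and the Fourier analogue).

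Two steps in your write-up need tightening, though both are covered by the classical theory you invoke. (a) Realness and simplicity of the zeros of $P$ cannot be justified by saying the Gauss nodes ``lie in the convex hull of any representing support'': at that stage no representing measure is available---that is what is being constructed. The correct argument is algebraic: if $P$ changed sign at only $k<n-1$ real points, let $q$ be the monic degree-$k$ polynomial vanishing there; then $\pm Pq\ge 0$ on $\R$ has even degree at most $2n-4$, hence is a sum of two squares of polynomials of degree at most $n-2$, so $L(Pq)>0$ by positive definiteness of the leading $(n-1)\times(n-1)$ block, contradicting $L(Pq)=0$ from orthogonality. (b) The inference ``the $f_P(x_i)$ are independent and span the range of $\widetilde M$, hence $\widetilde M=\sum_i w_i f_P(x_i)f_P(x_i)^\top$ for unique $w_i$'' is not valid as pure linear algebra: symmetric matrices with range contained in a fixed $(n-1)$-dimensional subspace form a space of dimension $n(n-1)/2$, not $n-1$. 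What closes this step is precisely the quadrature identity: with $w_i=L(\ell_i)$, where $\ell_i$ are the Lagrange polynomials at the nodes, division with remainder by $P$ gives $\sum_i w_i x_i^k=m_k$ for all $k\le 2n-3$, and $w_i=L(\ell_i^2)>0$; the remaining $(n,n)$ entries then agree automatically because both $\widetilde M$ and $\sum_i w_i f_P(x_i)f_P(x_i)^\top$ are positive semidefinite and annihilate the kernel vector $v$, whose last coordinate is nonzero. With (a) and (b) repaired, your existence and uniqueness arguments are complete.
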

Note that $f_P(\pm\infty)=\lim_{x\to\pm\infty} f_P(x)/x^{n-1}=\lim_{x\to\pm\infty} f_P(x)/(1+x^2)^{(n-1)/2}$.

Let $x_0=\pm\infty$.
From the same argument of the proof of Theorem \ref{thm:action},
by considering the M\"obius transform $x_i\mapsto (a x_i +b)/(c x_i +d)$, $i=0,1,\ldots,n-1$,
we find that the fixed point $x_0=\pm\infty$ in (\ref{KS-MP}) can be moved to
an arbitrary point in $\mathbb{R}$.

\begin{thm}
Let $x_0\in\mathbb{R}\cup\{\pm\infty\}$ be fixed arbitrarily.
$M\in \mathcal{M}_P$ is uniquely represented with $2n-1$ parameters $(w_0,\ldots,w_{n-1},x_1,\ldots,x_{n-1})$ as
\begin{align*}
 M = \sum_{i=1}^{n-1} w_i f_P(x_i) f_P(x_i)^\top + w_0 f_P(x_0) f_P(x_0)^\top, \ \ w_i>0, \\
 x_i\ne x_0, \ \ -\infty<x_1<\cdots<x_{n-1}\le\infty,
\end{align*}
where we assume that $f_P(\pm\infty)=(0,\ldots,0,1)^\top$.
\end{thm}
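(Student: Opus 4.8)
The plan is to transport the Karlin--Studden parameterization of the preceding Proposition --- in which the fixed support point is pinned at $\pm\infty$, representation (\ref{KS-MP}) --- to an arbitrary fixed point $x_0$ by a single M\"obius transformation, using the group action $A\mathcal{M}_PA^\top=\mathcal{M}_P$ established in Theorem \ref{thm:action}. The case $x_0=\pm\infty$ is the preceding Proposition itself, so I would assume $x_0$ finite.

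First I would choose a M\"obius map $\varphi=\varphi(\cdot;a,b,c,d)$ with $c\ne0$ sending $\infty$ to $x_0$, i.e.\ $a/c=x_0$, and let $A=A(a,b,c,d)\in\mathcal{A}$ be the associated matrix of (\ref{A}). Given $M\in\mathcal{M}_P$, put $N=A^{-1}M(A^{-1})^\top$; since $\mathcal{A}$ is a group (so $A^{-1}\in\mathcal{A}$) and $A^{-1}\mathcal{M}_P(A^{-1})^\top=\mathcal{M}_P$, we have $N\in\mathcal{M}_P$, and the preceding Proposition gives a unique representation
\[
 N=\sum_{i=1}^{n-1}v_i\,f_P(z_i)f_P(z_i)^\top+v_0\,f_P(\pm\infty)f_P(\pm\infty)^\top,
\]
with $v_i>0$ and $-\infty<z_1<\cdots<z_{n-1}<\infty$.

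Next I would push this forward through the exact identity $M=ANA^\top$. For finite $z$ the defining relation (\ref{A}) reads $A f_P(z)=\lambda(z;a,b,c,d)^{-1}f_P(\varphi(z))$, while dividing (\ref{A}) by $x^{n-1}$ and letting $x\to\pm\infty$ (recall $f_P(\pm\infty)=(0,\ldots,0,1)^\top$ and $\varphi(\infty)=x_0$) gives $A f_P(\pm\infty)=c^{\,n-1}f_P(x_0)$. Hence
\[
 M=\sum_{i=1}^{n-1}w_i\,f_P(x_i)f_P(x_i)^\top+w_0\,f_P(x_0)f_P(x_0)^\top,
\]
with $x_i=\varphi(z_i)$, $w_i=v_i\,\lambda(z_i;a,b,c,d)^{-2}=v_i(cz_i+d)^{2(n-1)}>0$, and $w_0=v_0\,c^{2(n-1)}>0$. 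This is the asserted representation: the fixed point $\pm\infty$ of $N$ is carried to $x_0$, and each $x_i=\varphi(z_i)\ne x_0$ since $\varphi$ is injective. Uniqueness would follow because $N\mapsto ANA^\top$ is a bijection of $\mathcal{M}_P$ onto itself and the weight correspondence $v_i\leftrightarrow w_i$ is a bijection of positive reals; thus any $x_0$-representation of $M$ pulls back to a $\pm\infty$-representation of $N$, which is unique by the preceding Proposition.

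The step I expect to be the main obstacle is the bookkeeping of the support ordering. Because $\varphi$ acts on the circle $\overline{\mathbb{R}}$ and has a pole at $-d/c=\varphi^{-1}(\infty)$, one of the finite $z_i$ may be carried to $+\infty$ (at such a point the factorization $A f_P(z_i)=\lambda(z_i)^{-1}f_P(\varphi(z_i))$ degenerates and must be replaced by the same limiting argument used for the $\infty$-term, yielding $A f_P(z_i)\propto f_P(+\infty)$), and the monotone order is reversed when $ad-bc<0$. Reconciling these with the normalized ordering $-\infty<x_1<\cdots<x_{n-1}\le\infty$ of the statement --- in particular the single free point possibly landing at $+\infty$, which is exactly what the relaxed bound ``$\le\infty$'' accommodates --- is the delicate part, though it is purely combinatorial once the M\"obius bijection on $\overline{\mathbb{R}}$ is in hand.
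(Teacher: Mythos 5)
Your proposal is correct and takes essentially the same approach as the paper: the paper's (very terse) argument likewise transports the Karlin--Studden representation (\ref{KS-MP}) with fixed point $\pm\infty$ to an arbitrary $x_0$ via the M\"obius group action of Theorem \ref{thm:action}. Your write-up merely supplies details the paper leaves implicit, namely the limit relation $A f_P(\pm\infty)=c^{\,n-1}f_P(x_0)$, the transformation of weights, the handling of a support point landing at $\infty$, and uniqueness via bijectivity of $N\mapsto A N A^\top$.
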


The counterpart for the moment cone (\ref{MF}) for trigonometric functions is obtained using Lemma \ref{lem:B}.
\begin{thm}
Let $t_0\in(-\frac{1}{2},\frac{1}{2}]$ be fixed arbitrarily.
$M\in \mathcal{M}_F$ is uniquely represented with $2n-1$ parameters $(w_0,\ldots,w_{n-1},t_1,\ldots,t_{n-1})$ as
\begin{align*}
 M = \sum_{i=1}^{n-1} w_i f_F(t_i) f_F(t_i)^\top + w_0 f_F(t_0) f_F(t_0), \ \ w_i>0, \\
 t_i\ne t_0, \ \ -\frac{1}{2}<t_1<\cdots<t_{n-1}\le\frac{1}{2}.
\end{align*}
\end{thm}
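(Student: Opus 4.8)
The plan is to transport the canonical parameterization of $\mathcal{M}_P$ just established to $\mathcal{M}_F$ through the linear isomorphism $M_P\mapsto B M_P B^\top$ together with the tangent substitution $x=\tan(\pi t)$. First I would invoke (\ref{MF}), $\mathcal{M}_F=B\mathcal{M}_P B^\top$ with $B$ non-singular by Lemma \ref{lem:B}, so that every $M\in\mathcal{M}_F$ is $M=B M_P B^\top$ for a \emph{unique} $M_P\in\mathcal{M}_P$. Given the prescribed $t_0\in(-\tfrac12,\tfrac12]$, I would set $x_0=\tan(\pi t_0)\in\R\cup\{\pm\infty\}$ (with $t_0=\tfrac12$ corresponding to $x_0=\pm\infty$) and apply the preceding theorem with this base point to obtain the unique representation $M_P=\sum_{i=1}^{n-1}w_i f_P(x_i)f_P(x_i)^\top + w_0 f_P(x_0)f_P(x_0)^\top$, with $w_i>0$, $x_i\ne x_0$, and $-\infty<x_1<\cdots<x_{n-1}\le\infty$.

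Next I would conjugate by $B$ and substitute $B f_P(x)=f_F(t)/\lambda_0(x)$, which is just (\ref{B}) rearranged, with $t=\tfrac1\pi\arctan(x)$. This yields $M=\sum_{i=1}^{n-1}\tilde w_i f_F(t_i)f_F(t_i)^\top + \tilde w_0 f_F(t_0)f_F(t_0)^\top$ with $\tilde w_i=w_i/\lambda_0(x_i)^2$. Since $\lambda_0(x)=(1+x^2)^{-(n-1)/2}>0$ on $\R$, positivity is preserved, $\tilde w_i>0$. Because $t\mapsto\tan(\pi t)$ is a strictly increasing bijection of $(-\tfrac12,\tfrac12]$ onto $\R\cup\{\pm\infty\}$, the ordering and distinctness constraints translate verbatim into $-\tfrac12<t_1<\cdots<t_{n-1}\le\tfrac12$ and $t_i\ne t_0$.

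For uniqueness I would reverse each step: any representation of $M$ in the claimed Fourier form pulls back through $B^{-1}$ and (\ref{B}) to a representation of $M_P$ of exactly the form in the preceding theorem, whose uniqueness forces the $(w_i,x_i)$, hence the $(\tilde w_i,t_i)$, to coincide. The only delicate point is a point at infinity, be it the base point $x_0$ when $t_0=\tfrac12$ or the largest support point $x_{n-1}=+\infty$: there I would use the limiting identity $B f_P(\pm\infty)=\lim_{x\to\pm\infty}B f_P(x)\lambda_0(x)=f_F(\tfrac12)$ (recall $f_P(\pm\infty)=\lim_{x\to\pm\infty}f_P(x)/(1+x^2)^{(n-1)/2}$), so the corresponding term is $w\,f_F(\tfrac12)f_F(\tfrac12)^\top$ with weight $\tilde w=w>0$. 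This boundary bookkeeping is the only slightly technical part; everything else is a direct change of variables through the fixed isomorphism $B$.
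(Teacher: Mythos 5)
Your proposal is correct and is exactly the route the paper intends: the paper proves this theorem with a single remark that it ``is obtained using Lemma \ref{lem:B},'' i.e., transporting the polynomial-cone representation through the isomorphism $M_P\mapsto B M_P B^\top$ and the substitution $x=\tan(\pi t)$, which is precisely what you carry out. Your explicit handling of the weights $\tilde w_i=w_i/\lambda_0(x_i)^2$ and of the boundary identification $B f_P(\pm\infty)=f_F(\tfrac{1}{2})$ simply fills in the details the paper leaves implicit.
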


A square matrix $M=(m_{i,j})$ is said to be Hankel if $m_{i,j}=m_{k,l}$
when $i+j=k+l$.
For example, matrices $M$ in (\ref{M3}) and (\ref{M}) are Hankel.
Obviously, each $M\in \mathcal{M}_P$ should be an $n\times n$ positive definite Hankel matrix.
It is known that the converse is also true.
\begin{prop}
\label{prop:hankel}
The moment cone $\mathcal{M}_P$ in (\ref{MP}) is characterized as
\[
\mathcal{M}_P = \bigl\{ M \succ 0 \mid \mbox{$M$ is Hankel}\,\bigr\}.
\]
\end{prop}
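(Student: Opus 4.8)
The plan is to prove the two inclusions separately. The inclusion $\mathcal{M}_P\subseteq\{M\succ0:M\ \text{Hankel}\}$ is immediate: for $M=\int_{\mathcal{X}} f_P(x)f_P(x)^\top\,\dd\rho(x)$ the $(i,j)$ entry equals $\int x^{i+j-2}\,\dd\rho(x)$, which depends on $(i,j)$ only through $i+j$, so $M$ is Hankel; positive definiteness is part of the definition of $\mathcal{M}_P$, and the moments are finite because $M$ is a finite matrix by assumption. Thus the whole content lies in the reverse inclusion: every positive definite Hankel matrix $M=(m_{i+j-2})$ is the moment matrix of some nonnegative measure on $\mathcal{X}=\R$.

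For this I would realize $M$ through a Gaussian-quadrature (orthogonal-polynomial) construction, which simultaneously reproduces the canonical Karlin--Studden form recalled above. Write $L(x^k)=m_k$ for the associated Riesz functional (well defined for $k\le 2n-2$) and $\langle p,q\rangle=L(pq)$ for polynomials of degree $\le n-1$; positive definiteness of $M$ makes $\langle\cdot,\cdot\rangle$ a genuine inner product. Gram--Schmidt on $1,x,\ldots,x^{n-1}$ yields monic orthogonal polynomials $p_0,\ldots,p_{n-1}$ with $\Vert p_k\Vert^2=D_{k+1}/D_k>0$, where the $D_k$ are the leading Hankel minors. Taking the $n-1$ roots $x_1<\cdots<x_{n-1}$ of $p_{n-1}$ as nodes and solving the nonsingular Vandermonde system $\sum_i w_i x_i^k=m_k$, $k=0,\ldots,n-2$, for the weights, the standard exactness argument — writing any $h$ of degree $\le 2n-3$ as $h=p_{n-1}s+r$ and using orthogonality — shows the quadrature reproduces $m_0,\ldots,m_{2n-3}$. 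Finally, since $x^{2n-2}-p_{n-1}(x)^2$ has degree $\le 2n-3$, one gets $m_{2n-2}-\sum_i w_i x_i^{2n-2}=L(p_{n-1}^2)=\Vert p_{n-1}\Vert^2>0$, so setting $w_0=\Vert p_{n-1}\Vert^2$ and adding mass $w_0$ at $x_0=\pm\infty$ (recall $f_P(\pm\infty)=(0,\ldots,0,1)^\top$ contributes only to the $(n,n)$ entry) matches every entry of $M$. Hence $M$ has exactly the canonical form of the preceding proposition and lies in $\mathcal{M}_P$. One may instead freely choose an extra moment $m_{2n-1}\in\R$ and take the $n$ real roots of $p_n$ as nodes, producing a representing measure supported at $n$ finite points of $\R$.

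The main obstacle is the step that must avoid circularity: one must show the roots of $p_{n-1}$ are \emph{real and simple} from positive definiteness of the Hankel form \emph{alone}, since no representing measure is yet available to run the usual sign-change argument. I would obtain this from the three-term recurrence $p_{k+1}(x)=(x-a_k)p_k(x)-b_k p_{k-1}(x)$, whose coefficients satisfy $b_k=\Vert p_k\Vert^2/\Vert p_{k-1}\Vert^2>0$; after the standard symmetric rescaling, $p_{n-1}$ is, up to a constant, the characteristic polynomial of a real symmetric tridiagonal (Jacobi) matrix of size $n-1$ with strictly positive off-diagonal entries, whose eigenvalues are automatically real and simple. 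Positivity of the weights then follows from $w_i=L(\ell_i^2)=\Vert\ell_i\Vert^2>0$, where $\ell_i$ is the degree-$(n-2)$ Lagrange polynomial at the nodes.

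I would also note that, this being the classical truncated Hamburger moment problem, the reverse inclusion can alternatively be quoted directly from Chapter~II of \citet{Karlin-Studden66}, the same reference used for the canonical parameterization; the quadrature argument above is simply the self-contained version of that result.
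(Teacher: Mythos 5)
Your proof is correct, but it takes a genuinely different route from the paper: the paper gives no argument at all, simply citing (9.1) of \citet{Karlin-Studden66}, p.~199, for this classical fact (the truncated Hamburger moment problem), whereas you give a self-contained Gauss-quadrature construction. The two places where such a construction could go wrong are both handled properly: you obtain real, simple roots of $p_{n-1}$ from the symmetric tridiagonal (Jacobi) form of the three-term recurrence, which correctly avoids the circular sign-change argument that presupposes a representing measure, and you obtain positivity of the weights from $w_i=L(\ell_i^2)$. One point deserves emphasis, though you already cover it: your main construction produces the Karlin--Studden canonical form with mass $w_0=\Vert p_{n-1}\Vert^2>0$ at the point at infinity, and a point mass at infinity is \emph{not} a measure on $\mathcal{X}=\R$; so by itself this step shows that $M$ lies in the canonically parameterized set, and membership in $\mathcal{M}_P$ as literally defined (integrals against nonnegative measures on $\R$) is being inferred by reading the preceding proposition as a bijective parameterization. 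Your closing variant --- choosing the extra moment $m_{2n-1}$ freely (equivalently, the last recurrence coefficient $a_{n-1}$), so that $p_n$ is the characteristic polynomial of an $n\times n$ Jacobi matrix with real simple eigenvalues, and placing positive weights at its $n$ roots --- is the cleaner way to finish, since it exhibits an honest $n$-atom representing measure supported in $\R$ and lands in $\mathcal{M}_P$ directly from the definition. What your approach buys is self-containedness and a transparent explanation of why positive definiteness alone suffices; what the paper's citation buys is brevity and access to the full Karlin--Studden theory, including the uniqueness of the canonical representation, which your existence argument does not address.
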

For the proof, see (9.1) of \cite{Karlin-Studden66}, p.\,199. %, Eq.\,(9.1).
This also gives a unique representation of $\mathcal{M}_P$ with $2n-1$ parameters $(m_0,m_1,\ldots,m_{2n-2})$.

Theorem \ref{thm:action} combined with Proposition \ref{prop:hankel} implies that group $\mathcal{A}$ acts on the cone of (positive definite) Hankel matrices.
For the M\"obius group action on Hankel matrices, see also \citet{Heinig-Rost89,Heinig-Rost10}.

\subsection{Invariance under the M\"obius group}

In this subsection, we consider the polynomial regression.
We formalized our optimal experimental design problem to find the minimizer $M\in \mathcal{M}_P$ of $\Vol_1(\gamma_{M^{-1}})$ in (\ref{len}).
\begin{thm}
\label{thm:invariance}
For $M\in \mathcal{M}_P$ and $A\in \mathcal{A}$,
\[
 \Vol_1(\gamma_{M^{-1}}) = \Vol_1\bigl(\gamma_{(A M A^\top)^{-1}}\bigr).
\]
\end{thm}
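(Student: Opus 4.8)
The plan is to evaluate both sides directly from the explicit volume formula (\ref{len}), taking $f=f_P$ and $\Sigma=M^{-1}$ on the left and $\Sigma=(AMA^\top)^{-1}$ on the right, and then to identify the two integrals through the change of variables $y=\varphi(x;a,b,c,d)$ furnished by the Möbius transform that defines $A$ in (\ref{A}). This runs exactly parallel to the computation already performed in the proof of Theorem \ref{thm:VolF-VolP}: there the matrix $B$ and the tangent substitution did the job that $A$ and $\varphi$ do here, the one difference being that $A$ now carries $\mathcal{M}_P$ to itself (Theorem \ref{thm:action}) rather than relating $\mathcal{M}_P$ to $\mathcal{M}_F$.

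The key step is a frame identity obtained by differentiating (\ref{A}). Writing $y=\varphi(x)$, $g_P=\dd f_P/\dd x$, $\lambda=\lambda(x;a,b,c,d)$, and $\dot\varphi=\dd y/\dd x$, I would differentiate $f_P(y)=\lambda A f_P(x)$ to reach
\[
 \bigl(f_P(y),g_P(y)\bigr)\begin{pmatrix} 1 & 0 \\ 0 & \dot\varphi \end{pmatrix}
 = A\bigl(f_P(x),g_P(x)\bigr)\begin{pmatrix} \lambda & \dot\lambda \\ 0 & \lambda \end{pmatrix},
\]
the precise analogue of the factorization displayed in the proof of Theorem \ref{thm:VolF-VolP}. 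Solving for $\bigl(f_P(y),g_P(y)\bigr)$ expresses the frame at $y$ via the frame at $x$, an upper-triangular factor $P$ with $\det P=\lambda^2$, and the diagonal factor with determinant $\dot\varphi$.

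Substituting this into (\ref{len}) for the right-hand side, the outer matrices $A^\top$ and $A^{-\top}$ flanking $M^{-1}$ cancel, since $(AMA^\top)^{-1}=A^{-\top}M^{-1}A^{-1}$. The denominator then reduces to $f_P(y)^\top (AMA^\top)^{-1} f_P(y)=\lambda^2\, f_P(x)^\top M^{-1} f_P(x)$, while the $2\times 2$ matrix inside the determinant is conjugated by $P$ and by the diagonal factor, so its determinant picks up $\det(P)^2/\dot\varphi^2=\lambda^4/\dot\varphi^2$; its square root is $\lambda^2/|\dot\varphi|$. Forming the ratio, the powers of $\lambda$ cancel and the integrand at $y$ equals $|\dot\varphi|^{-1}$ times the polynomial integrand at $x$. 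The change of variables $\dd y=|\dot\varphi|\,\dd x$ then cancels the surviving Jacobian, yielding equality of the two integrals and hence of the two volumes.

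I expect the only genuine obstacle to be bookkeeping at the pole $x=-d/c$ of $\varphi$ (and its image at infinity): the Möbius map is a bijection of $\overline{\R}$ but not of $\R$, so I would argue that this single exceptional point is null and that the normalized curve $\gamma$ extends continuously across it, legitimizing the substitution on $\R$ up to a set of measure zero. Once the frame factorization above is in place, all remaining matrix algebra is routine.
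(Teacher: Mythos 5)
Your proposal is correct and follows essentially the same route as the paper's proof: the frame identity obtained by differentiating $f_P(\varphi(x))=\lambda(x)A f_P(x)$, the resulting upper-triangular factor with determinant $\lambda^2/\dot\varphi$, cancellation of the powers of $\lambda$, and the change of variables $y=\varphi(x)$ in the integral (\ref{len}). Your explicit attention to the pole $x=-d/c$ and to the sign $|\dot\varphi|$ when $ad-bc<0$ is in fact slightly more careful than the paper, which passes over both points silently.
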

Theorem \ref{thm:invariance} and Theorem \ref{thm:action} imply
that the minimizer of $\Vol_1(\gamma_{M^{-1}})$ with respect to $M\in \mathcal{M}_P$
forms an orbit (or a union of orbits) on $\mathcal{M}_P$.

\begin{proof}
Let $y=\varphi(x)=\varphi(x;a,b,c,d)=(a x +b)/(c x +d)$.
Then,
$f_P(y) = f_P(\varphi(x)) = A f_P(x) \lambda(x)$,
where $A=A(a,b,c,d)$, $\lambda(x)=\lambda(x;a,b,c,d)$.
Taking derivatives with respect to $x$,
\[
 g_P(y) \dot \varphi(x) = A f_P(x) \dot\lambda(x) + A g_P(x) \lambda(x), \quad
 \dot \varphi(x)
 = \frac{ad-bc}{(cx+d)^2}.
\]
Therefore,
\[
 \bigl( f_P(y), g_P(y) \bigr) =
 A \bigl( f_P(x), g_P(x) \bigr)
 \begin{pmatrix} \lambda(x) & \displaystyle \frac{\dot\lambda(x)}{\dot \varphi(x)} \\ 0 & \displaystyle \frac{\lambda(x)}{\dot \varphi(x)} \end{pmatrix}
\]
and
\begin{align*}
& \det\left(
 \begin{pmatrix} f_P(y)^\top \\ g_P(y)^\top \end{pmatrix} M^{-1}
 \bigl( f_P(y), g_P(y) \bigr) \right)^{\frac{1}{2}} \\
&\quad =
\det\left(
 \begin{pmatrix} f_P(x)^\top \\ g_P(x)^\top \end{pmatrix} (A^{-1}M (A^{-1})^\top)^{-1}
 \bigl( f_P(x), g_P(x) \bigr) \right)^{\frac{1}{2}}
 \frac{\lambda(x)^2}{\dot \varphi(x)}.
\end{align*}
By combining this with
\[
 f_P(y)^\top M^{-1} f_P(y) =
 f_P(x)^\top (A^{-1}M (A^{-1})^\top)^{-1} f_P(x) \lambda(x)^2
\]
and $\dd y=\dot \varphi(x) \dd x$, we have
\begin{align*}
 \Vol_1(\gamma_{M^{-1}})
&=
 2 \int_{-\infty}^{\infty} \frac{\det\left(
 \begin{pmatrix} f_P(y)^\top \\ g_P(y)^\top \end{pmatrix} M^{-1}
 \bigl( f_P(y), g_P(y) \bigr) \right)^{\frac{1}{2}}}
 {f_P(y)^\top M^{-1} f_P(y)} \, \dd y \\
&=
 2 \int_{-\infty}^{\infty} \frac{\det\left(
 \begin{pmatrix} f_P(x)^\top \\ g_P(x)^\top \end{pmatrix} (A^{-1}M (A^{-1})^\top)^{-1}
 \bigl( f_P(x), g_P(x) \bigr) \right)^{\frac{1}{2}}}
 {f_P(x)^\top (A^{-1}M (A^{-1})^\top)^{-1} f_P(x)} \, \dd x \\
&= \Vol_1(\gamma_{A^{-1}M (A^{-1})^\top}).
\end{align*}
\end{proof}

\begin{thm}
\label{thm:invariance-design}
The volumes of the weighted polynomial design
$\left\{\begin{matrix} x_i \\ p_i \end{matrix}\right\}_{1\le i\le N}$ with variance $\sigma_P^2(x;a_0,b_0,c_0,d_0)$, and the design
$\left\{\begin{matrix} y_i \\ p_i \end{matrix}\right\}_{1\le i\le N}$, $y_i=\varphi(x_i;a,b,c,d)$ with variance $\sigma_P^2(y;a',b',c',d')$
are the same, where $(a',b',c',d')$ is determined by
\[
 \varphi(\cdot;a',b',c',d') = \varphi(\cdot;a_0,b_0,c_0,d_0)\circ\varphi^{-1}(\cdot;a,b,c,d).
\]
\end{thm}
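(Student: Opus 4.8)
The plan is to reduce the claim to Theorem~\ref{thm:invariance} by exhibiting the information matrices of the two designs as related by the action of a single element $A=A(a,b,c,d)\in\mathcal{A}$, up to an irrelevant positive scalar. Write $M_1=\sum_i f_P(x_i) f_P(x_i)^\top p_i/\sigma_P^2(x_i;a_0,b_0,c_0,d_0)$ and $M_2=\sum_i f_P(y_i) f_P(y_i)^\top p_i/\sigma_P^2(y_i;a',b',c',d')$ for the two designs, where $y_i=\varphi(x_i;a,b,c,d)$. Both lie in $\mathcal{M}_P$ by construction, and the goal is to prove that $M_2$ is proportional to $A M_1 A^\top$.

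The key computation has two ingredients. First, (\ref{A}) gives $f_P(y_i)=\lambda(x_i;a,b,c,d)\, A f_P(x_i)$, so each rank-one term of $M_2$ carries a factor $\lambda(x_i;a,b,c,d)^2$ together with the conjugation by $A$. Second, I would transform the variance in the denominator of $M_2$. Applying (\ref{sigma-mobius}) with inner transform $\varphi(\cdot;a,b,c,d)$ and outer label $(a',b',c',d')$ yields $\sigma_P^2(y_i;a',b',c',d') = \sigma_P^2(x_i;\tilde a,\tilde b,\tilde c,\tilde d)\,\lambda(x_i;a,b,c,d)^2$, where $\varphi(\cdot;\tilde a,\tilde b,\tilde c,\tilde d)=\varphi(\cdot;a',b',c',d')\circ\varphi(\cdot;a,b,c,d)$. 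Using the defining relation $\varphi(\cdot;a',b',c',d')=\varphi(\cdot;a_0,b_0,c_0,d_0)\circ\varphi^{-1}(\cdot;a,b,c,d)$ together with associativity, the inner $\varphi^{-1}$ and $\varphi$ cancel, so $\varphi(\cdot;\tilde a,\tilde b,\tilde c,\tilde d)=\varphi(\cdot;a_0,b_0,c_0,d_0)$ as transforms. Tracking the representatives through the product rule (\ref{group_product}) shows $(\tilde a,\tilde b,\tilde c,\tilde d)=(ad-bc)(a_0,b_0,c_0,d_0)$, and since $\sigma_P^2(\cdot;a,b,c,d)$ is homogeneous of degree $2(n-1)$ in $(a,b,c,d)$, this contributes a factor $(ad-bc)^{2(n-1)}$. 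Substituting back, the $\lambda(x_i;a,b,c,d)^2$ factors cancel between numerator and denominator, leaving
\[
 M_2=(ad-bc)^{-2(n-1)}\, A M_1 A^\top.
\]

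To conclude, note that $(ad-bc)^{-2(n-1)}>0$, so $M_2$ is a positive multiple of $A M_1 A^\top$. Since the criterion (\ref{len}) is invariant under $\Sigma\mapsto k\Sigma$ (equivalently $M\mapsto kM$) for $k>0$, we have $\Vol_1(\gamma_{M_2^{-1}})=\Vol_1(\gamma_{(A M_1 A^\top)^{-1}})$, and Theorem~\ref{thm:invariance} gives $\Vol_1(\gamma_{(A M_1 A^\top)^{-1}})=\Vol_1(\gamma_{M_1^{-1}})$; chaining these equalities proves that the two designs have the same volume. The main obstacle is purely bookkeeping: correctly composing the M\"obius parameters through the $\varphi^{-1}$ in the statement so that the transformed variance collapses back to $\sigma_P^2(\cdot;a_0,b_0,c_0,d_0)$ up to the scalar, thereby making the $\lambda^2$ cancellation and the conjugation structure line up. Once that alignment is verified, the invariance and scale-invariance results close the argument.
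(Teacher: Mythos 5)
Your proof is correct and follows essentially the same route as the paper's: both identify the information matrices of the two designs as related by conjugation with $A(a,b,c,d)$, using (\ref{A}) together with (\ref{sigma-mobius}), and then conclude via Theorem~\ref{thm:invariance}. The only difference is that you explicitly track the representative-dependent scalar $(ad-bc)^{-2(n-1)}$ and dispose of it by scale invariance of the criterion, whereas the paper's proof absorbs that scalar implicitly through its choice of representative in (\ref{sigma-mobius}).
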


\begin{proof}
In (\ref{vw}) of the proof of Theorem \ref{thm:action}, let $w_i=p_i/\sigma_P^2(x_i;a_0,b_0,c_0,d_0)$.
Then, by (\ref{sigma-mobius}),
\[
 v_i = \frac{1}{\lambda(x_i;a,b,c,d)^{2}} \frac{p_i}{\sigma_P^2(x_i;a_0,b_0,c_0,d_0)} = \frac{p_i}{\sigma_P^2(y_i;a',b',c',d')}.
\]
This means that information matrices $M_1$ and $M_2$ of the two designs satisfy $M_1 = A M_2 A^\top$ and hence have the same volume by Theorem \ref{thm:invariance}.
\end{proof}

\subsection{D-optimal design for weighted polynomial regression}
\label{subsec:D-optimal}

We characterize the D-optimal design for the weighted polynomial regression as an orbit of the M\"obius group action.
We start from the fact that in the Fourier regression, the uniform design is D-optimal.
\begin{prop}[\cite{Guest58}]
\label{prop:guest}
In the Fourier regression with the basis (\ref{fourier}),
among the $n$-point discrete design,
only the uniform design
\begin{equation}
\label{uniform-again}
 \left\{\begin{matrix} t^0_i - \theta \\ \frac{1}{n} \end{matrix}\right\}_{1\le i\le n}, \quad t^0_i = \frac{i}{n}-\frac{n+1}{2 n},
\end{equation}
is D-optimal,
where $\theta\in\bigl(-\frac{1}{2n},\frac{1}{2n}\bigr)$ is an arbitrary constant.
The information matrix at the optimal point is the identity $I_n$.
\end{prop}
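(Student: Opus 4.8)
The plan is to exploit that an $n$-point design has a \emph{square} design matrix, so that $\det M$ factors transparently, and then to reduce the maximization of $\det M$ to a classical extremal problem for points on a circle. Write the support points as $t_1,\dots,t_n$ with weights $p_1,\dots,p_n$, collect the basis vectors into the $n\times n$ matrix $F=(f_F(t_1),\dots,f_F(t_n))$, and set $D=\diag(p_1,\dots,p_n)$, so that $M=FDF^\top$ and hence $\det M=(\det F)^2\prod_{i=1}^n p_i$. For fixed support points the factor $(\det F)^2$ is constant, so by the AM--GM inequality the product $\prod_i p_i$ subject to $\sum_i p_i=1$ is maximized \emph{uniquely} by the uniform weights $p_i=1/n$. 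This disposes of the weights and reduces the problem to maximizing $(\det F)^2$ over the choice of the $n$ support points.

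Next I would factor $\det F$ as a trigonometric Vandermonde. In both parities the trigonometric basis is a fixed, point-independent invertible linear combination of complex exponentials: for odd $n=2m+1$ one uses $z=e^{2\pi\mathrm{i}t}$ with exponents $-m,\dots,m$, while for even $n=2m$ one uses the half-frequencies $e^{\pi\mathrm{i}t}$ with odd exponents. Writing $F=UE$ with $U$ constant and $E$ the matrix of exponentials, one factors a suitable power of the exponential variable out of each column of $E$ and is left with an ordinary Vandermonde determinant in $z_j=e^{2\pi\mathrm{i}t_j}$, giving
\[
 |\det F|^2 = |\det U|^2\prod_{1\le i<j\le n}|z_i-z_j|^2,
 \qquad |z_i-z_j| = 2\bigl|\sin\pi(t_i-t_j)\bigr|.
\]
Thus D-optimality among $n$-point designs is equivalent to maximizing the product of pairwise chordal distances of $n$ points on the unit circle.

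The crux is to show that this product is maximized exactly by the equally spaced configuration. Here I would use the Gram-matrix form together with Hadamard's inequality: with the Vandermonde $V=(z_j^{\,k-1})_{j,k}$ one has $\prod_{i<j}|z_i-z_j|^2=\det(VV^*)$, and since $|z_j|=1$ the Hermitian positive semidefinite matrix $VV^*$ has every diagonal entry equal to $n$. Hadamard's inequality then gives $\det(VV^*)\le n^n$, with equality if and only if $VV^*$ is diagonal, i.e.\ $\sum_{l=0}^{n-1}(z_j\bar z_k)^l=0$ for all $j\ne k$. This vanishing forces each ratio $z_j\bar z_k$ to be a nontrivial $n$-th root of unity, so the $z_j$ form a single rotation of the $n$-th roots of unity --- precisely the equally spaced points (\ref{uniform-again}), the rotation accounting for the free shift $\theta\in(-\tfrac1{2n},\tfrac1{2n})$. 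Combined with the uniqueness of the uniform weights from the first step, this shows the uniform design is the \emph{unique} $n$-point D-optimal design.

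Finally, to identify the information matrix at the optimum as $I_n$, I would note that $M=\frac1n FF^\top$ at the uniform design, so the claim $M=I_n$ is equivalent to the rows of $F$ being orthogonal with squared norm $n$; this is exactly the discrete orthogonality of the trigonometric functions over equally spaced nodes, for which the $\sqrt2$ normalizations in (\ref{fourier}) produce unit diagonal entries. The main obstacle is the extremal step of the third paragraph --- establishing that equally spaced points \emph{uniquely} maximize the product of pairwise distances; the Hadamard/Gram-matrix argument settles both the bound and the equality case in one stroke, and the only genuine care needed elsewhere is the uniform treatment of the even-$n$ half-integer frequencies in the Vandermonde reduction.
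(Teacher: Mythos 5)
Your proposal is correct, and it is genuinely different from what the paper does: the paper offers no proof of this proposition at all, simply citing \cite{Guest58}, so your argument is a self-contained replacement for that citation. Your three-step structure is sound: (a) the factorization $\det M=(\det F)^2\prod_i p_i$ cleanly decouples weights from support, and strict AM--GM gives uniqueness of the uniform weights; (b) the reduction of $|\det F|$ to a Vandermonde in $z_j=e^{2\pi\mathrm{i}t_j}$ works in both parities --- for even $n$ the factor $\prod_j w_j^{-(n-1)}$ (with $w_j=e^{\pi\mathrm{i}t_j}$) pulled out of the odd-exponent columns has modulus one, and since $t\mapsto z$ is a bijection on $(-1/2,1/2]$ no identification issues arise; (c) the Hadamard/Gram step is the right tool, since every diagonal entry of $VV^*$ equals $n$ and the equality case forces each $z_j\bar z_k$ ($j\ne k$) to be a nontrivial $n$-th root of unity, whence the $n$ distinct points are exactly a rotation of the $n$-th roots of unity; this settles both the bound and uniqueness simultaneously. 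Your final step also holds as stated: at any rotation $\theta$ of the equally spaced nodes, every off-diagonal entry of $FF^\top$ is a sum of the form $\sum_j\cos\bigl(2\pi r(t_j^0-\theta)\bigr)$ or $\sum_j\sin\bigl(2\pi r(t_j^0-\theta)\bigr)$ with integer $0<|r|<n$ (even for the half-frequency basis, since products of two basis entries always produce integer frequencies), and such sums vanish identically in $\theta$; the diagonal entries equal $n$ thanks to the $\sqrt{2}$ normalization, giving $M=I_n$ independently of $\theta$. The one point worth making explicit in a final write-up is the equality case of Hadamard's inequality that you invoke (equality for a positive definite Gram matrix iff the vectors are pairwise orthogonal), since the uniqueness assertion \emph{only the uniform design} rests entirely on it.
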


Let $M_F$ be an information matrix of a Fourier design $\left\{\begin{matrix} t_i \\ p_i \end{matrix}\right\}_{1\le i\le n}$.
By making a change of variables $y_i=\tan(\pi t_i)$ and $y_i=\varphi(x_i;a,b,c,d)$, we have from (\ref{B}), (\ref{A}), and (\ref{sigmaPa0}) that
\begin{align*}
M_F
=& \sum_i f_F(t_i) f_F(t_i)^\top p_i \\
=& B\biggl( \sum_i f_P(y_i) f_P(y_i)^\top \frac{1}{\sigma_P^2(y_i)} p_i \biggr) B^\top \\
=& B A \biggl( \sum_i f_P(x_i) f_P(x_i)^\top \frac{\lambda(x_i;a,b,c,d)^2}{\sigma_P^2(x_i;a,b,c,d)\lambda(x_i;a,b,c,d)^2} p_i \biggr) A^\top B^\top \\
=& B A M_P A^\top B^\top,
\end{align*}
where
\[
 M_P = \sum_i f_P(x_i) f_P(x_i)^\top \frac{1}{\sigma_P^2(x_i;a,b,c,d)} p_i
\]
is the information matrix of the design
$\left\{\begin{matrix} x_i \\ p_i \end{matrix}\right\}_{1\le i\le n}$
for the weighted polynomial regression with variance function $\sigma_P^2(x;a,b,c,d)$.
Because $\det(M_F)=\det(AB)^2\times\det(M_P)$, the D-optimal problem for searching optimal $t_i$ and $p_i$ in the Fourier regression are equivalent to searching for optimal $x_i$ and $p_i$ in the weighted polynomial regression.
Hence, Proposition \ref{prop:guest} is translated into the weighted polynomial regression as follows.

\begin{thm}
\label{thm:d-optimal}
In the weighted polynomial regression of degree $n-1$ with variance $\sigma_P^2(x;a_0,b_0,c_0,d_0)$,
among the $n$-point discrete design, only
the design $\left\{\begin{matrix} x_i \\ \frac{1}{n} \end{matrix}\right\}_{1\le i\le n}$ with
$x_i=\varphi^{-1}(\varphi(\tan(\pi t^0_i);s,-t,t,s);a_0,b_0,c_0,d_0))$
is D-optimal,
where $t^0_i$ is given in (\ref{uniform-again}), and $s,t$ are arbitrary numbers such that $s^2+t^2=1$.
The information matrix at the D-optimal point is $A_0^{-1}(B^\top B)^{-1}(A_0^\top)^{-1}$, where $A_0=A(a_0,b_0,c_0,d_0)$, and $(B^\top B)^{-1}$ is given in (\ref{BB}).
\end{thm}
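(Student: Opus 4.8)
The plan is to transport Guest's result (Proposition~\ref{prop:guest}) from the Fourier regression to the weighted polynomial regression through the change of variables already assembled in the display preceding the theorem, and then to read off both the optimal support points and the optimal information matrix.

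First I would record the key identity $M_F = B A_0 M_P A_0^\top B^\top$ with $A_0 = A(a_0,b_0,c_0,d_0)$, valid whenever the polynomial design carries variance $\sigma_P^2(x;a_0,b_0,c_0,d_0)$ and the Fourier and polynomial supports are linked by $y_i=\tan(\pi t_i)$ and $y_i=\varphi(x_i;a_0,b_0,c_0,d_0)$. Since $\det M_F = (\det A_0 B)^2 \det M_P$ and $\det A_0 B$ is a nonzero constant independent of the design, maximizing $\det M_P$ over $n$-point weighted polynomial designs is equivalent to maximizing $\det M_F$ over $n$-point Fourier designs, and the bijection between the supports preserves the ``only'' assertion. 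By Proposition~\ref{prop:guest} the latter maximizers are exactly the shifted uniform designs supported on $\{t^0_i-\theta\}$, $\theta\in(-\tfrac{1}{2n},\tfrac{1}{2n})$, each with Fourier information matrix $I_n$.

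Next I would translate the optimal support back. From $x_i=\varphi^{-1}(\tan(\pi(t^0_i-\theta));a_0,b_0,c_0,d_0)$ I would use the tangent-subtraction formula to rewrite $\tan(\pi(t^0_i-\theta)) = \varphi(\tan(\pi t^0_i);s,-t,t,s)$ with $s=\cos(\pi\theta)$ and $t=\sin(\pi\theta)$; expanding $\varphi(\tan(\pi t^0_i);s,-t,t,s)=(s\tan(\pi t^0_i)-t)/(t\tan(\pi t^0_i)+s)$ and clearing denominators reproduces $\sin(\pi t^0_i-\pi\theta)/\cos(\pi t^0_i-\pi\theta)$, which is precisely the stated form of $x_i$. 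This identifies the Fourier shift with the rotation branch $\varphi(\cdot;s,-t,t,s)$ of the orthogonal subgroup, so the D-optimal polynomial designs form a single orbit of that subgroup. For the information matrix I would simply solve $I_n=M_F=B A_0 M_P A_0^\top B^\top$, giving $M_P = A_0^{-1}(B^\top B)^{-1}(A_0^\top)^{-1}$; since every Guest optimum has $M_F=I_n$ irrespective of $\theta$, this value is independent of $(s,t)$, matching the theorem.

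The main obstacle is the bookkeeping in this last translation step, specifically reconciling the parameter ranges: Guest produces genuinely distinct optima only for $\theta$ in the fundamental domain $(-\tfrac{1}{2n},\tfrac{1}{2n})$, whereas the statement admits arbitrary $(s,t)$ on the circle $s^2+t^2=1$. I would resolve this by noting that the uniform support $\{t^0_i-\theta\}$ is periodic in $\theta$ with period $1/n$ (shifting $\theta$ by $1/n$ merely relabels the points), so the map $(s,t)=(\cos\pi\theta,\sin\pi\theta)$ sweeps the same family of designs several times; the orbit traced out by all $(s,t)$ with $s^2+t^2=1$ therefore coincides with the set of D-optimal designs, and only the rotation branch is needed because the symmetry $t^0_{n+1-i}=-t^0_i$ of the uniform support sends the reflection branch back into the rotation family. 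Care is also needed to confirm that the Fourier Gram matrix of a shifted equally spaced design stays equal to $I_n$, which holds because the shift acts as an orthogonal conjugation of $M_F$ in the frequency domain.
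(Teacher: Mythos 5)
Your proposal is correct and follows essentially the same route as the paper: transport Guest's result through the identity $M_F = B A_0 M_P A_0^\top B^\top$ and the determinant relation $\det M_F = (\det A_0 B)^2 \det M_P$, identify the shift $\theta$ with the rotation $\varphi(\cdot;s,-t,t,s)$ via the tangent-subtraction formula, and solve $I_n = B A_0 M_P A_0^\top B^\top$ for the optimal information matrix. Your extra bookkeeping on the periodicity of the shifted uniform support (reconciling $\theta\in(-\tfrac{1}{2n},\tfrac{1}{2n})$ with arbitrary $(s,t)$ on the circle) is a point the paper passes over silently, but it does not change the argument.
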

\begin{proof}
Note that
\[
 y_i = \tan(\pi(t^0_i-\theta)) = \frac{\cos(\pi\theta)\tan(\pi t^0_i)-\sin(\pi\theta)}{\sin(\pi\theta)\tan(\pi t^0_i)+\cos(\pi\theta)}
 = \varphi(\tan(\pi t^0_i);s,-t,t,s),
\]
where $s=\cos(\pi\theta)$, $t=\sin(\pi\theta)$.
$x_i = \varphi^{-1}(y_i;a_0,b_0,c_0,d_0)$.
\end{proof}

When $(a_0,b_0,c_0,d_0)=(1,0,0,1)$, Theorem \ref{thm:d-optimal} reduces to Theorem 3.3 of \cite{Dette-etal99}.

\section{Tube-volume optimal design for $n=3$}
\label{sec:n=3}

In the previous section, we discussed the Fourier regression and the polynomial regression having the basis of (\ref{fourier}) and (\ref{polynomial}), respectively, of a general dimension $n$.
In this section, we treat the case $n=3$.
This is the simplest non-trivial case, because when $n=2$, $\Vol_1(\gamma_{M^{-1}})=2\pi$ irrespective of $M$.

When $n=3$, the problem is reduced to the minimization of
\begin{align*}
 \Vol_1(\gamma_{M^{-1}})
&= 2 \int_{-\infty}^{\infty}
 \frac{\det\left(\begin{pmatrix} 1 & x & x^2 \\ 0 & 1 & 2x \end{pmatrix} M^{-1} \begin{pmatrix} 1 & 0 \\ x & 1 \\ x^2 & 2x \end{pmatrix}\right)^{\frac{1}{2}}}
 {\begin{pmatrix} 1 & x & x^2 \end{pmatrix} M^{-1} \begin{pmatrix} 1 \\ x \\ x^2 \end{pmatrix}} \dd x \\
&= 2 \int_{-\infty}^{\infty} \frac{\sqrt{h_1(x)}}{h_0(x)} \dd x,
\end{align*}
where
\begin{align*}
h_1(x) =&
m_4 (-m_0 m_3^2+m_0 m_2 m_4+2 m_1 m_2 m_3-m_1^2 m_4-m_2^3) \\
& +4 m_3(-m_0 m_2 m_4+m_0 m_3^2-2 m_1 m_2 m_3+m_1^2 m_4+m_2^3) x \\
& +6 m_2(m_0 m_2 m_4-m_0 m_3^2+2 m_1 m_2 m_3-m_1^2 m_4-m_2^3) x^2 \\
& +4 m_1(m_0 m_3^2-m_0 m_4 m_2-2 m_1 m_2 m_3+m_1^2 m_4+m_2^3) x^3 \\
& +m_0(m_0 m_2 m_4-m_0 m_3^2+2 m_1 m_2 m_3-m_1^2 m_4-m_2^3) x^4, \\
h_0(x) =&
m_2 m_4-m_3^2+2(-m_1 m_4+m_2 m_3) x
  +(m_0 m_4+2m_1 m_3-3m_2^2) x^2 \\
& +2(-m_0 m_3+m_1 m_2)x^3+(m_0 m_2-m_1^2) x^4,
\end{align*}
with respect to
\begin{equation}
\label{M}
 M = \begin{pmatrix} m_0 & m_1 & m_2 \\ m_1 & m_2 & m_3 \\m_2 & m_3 & m_4 \end{pmatrix} \succ 0.
\end{equation}
The volume becomes an elliptic integral, which does not have an explicit expression in general.
Moreover, the number of parameters to be optimized is four.
(Note that the integrand $\sqrt{h_1(x)}/h_0(x)$ is a homogeneous function in $m_0,\ldots,m_4$).
We will solve this minimization problem using the M\"obius invariance.

\subsection{Orbital decomposition}

The M\"obius group action defines an equivalence relation on the moment cone $\mathcal{M}_P$.
We define $M_0\sim M_1$ for $M_0, M_1\in \mathcal{M}_P$
if $M_1=A M_0 A^\top$ for some $A\in \mathcal{A}$.
The orbit passing through $M$ is denoted by
\[
 \mathcal{A}(M) = \bigl\{ A M A^\top \mid A \in \mathcal{A} \bigr\} = \{ M_1 \mid M_1 \sim M \}.
\]
The goal of this subsection is to provide the orbital decomposition
of the polynomial moment cone $\mathcal{M}_P$.
Let
\begin{equation}
\label{Mv}
 M_v = \begin{pmatrix} 1 & 0 & v \\ 0 & v & 0 \\ v & 0 & 1 \end{pmatrix}.
\end{equation}

\begin{thm}
\label{thm:orbital}
\[
\mathcal{M}_P
 = \bigsqcup_{v\in(0,\frac{1}{3}]} \mathcal{A}(M_v)
 = \bigsqcup_{v\in[\frac{1}{3},1)} \mathcal{A}(M_v),
\]
where $\sqcup$ is the disjoint union.
\end{thm}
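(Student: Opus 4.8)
The plan is to attach to each $M\in\mathcal{M}_P$ a numerical invariant that is constant on $\mathcal{A}$-orbits and takes a distinct value on each representative $M_v$, and then to prove that this invariant is complete. The natural invariant comes from the discriminant. For $n=3$ set $D=\begin{pmatrix}0&0&-2\\0&1&0\\-2&0&0\end{pmatrix}$, so that $c^\top D c=\beta^2-4\alpha\gamma$ is the discriminant of the binary quadratic with coefficient vector $c=(\alpha,\beta,\gamma)^\top$. Since the discriminant is a relative invariant of degree two under the action (\ref{A}) — a fact one can read off directly from the explicit $A(a,b,c,d)$ in (\ref{A3}) — one has $A D A^\top=(ad-bc)^2 D$ for every $A\in\mathcal{A}$. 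I would then consider the generalized eigenvalues $\mu_1,\mu_2,\mu_3$, i.e. the roots of $\det(M-\mu D)=0$. Because $ADA^\top$ is a positive multiple of $D$ while $AMA^\top$ is the orbit element, passing from $M$ to $AMA^\top$ multiplies all three $\mu_i$ by the common factor $(ad-bc)^{2}$; hence the unordered projective triple $[\mu_1:\mu_2:\mu_3]$ is an $\mathcal{A}$-invariant.

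Next I would extract the single real parameter hidden in this triple. Since $M\succ0$ and $D$ has signature $(2,1)$, simultaneous diagonalization of the pencil shows exactly two of the $\mu_i$ are positive and one is negative. Moreover $\sum_i\mu_i=\tr(D^{-1}M)$, and a one-line computation using the Hankel structure (the $(1,3)$, $(2,2)$, $(3,1)$ entries all equal $m_2$) gives $\tr(D^{-1}M)=0$. Thus the negative eigenvalue is minus the sum of the two positive ones, and the orbit is pinned down by the ratio of the two positive eigenvalues alone. Evaluating on $M_v$ gives $\det(M_v-\mu D)=(v-\mu)\bigl(1-(v+2\mu)^2\bigr)$, so the eigenvalues are $v$, $(1-v)/2$, $-(1+v)/2$; normalizing the negative one to $-1$, the two positive values are $2v/(1+v)$ and $(1-v)/(1+v)$, which sum to $1$.

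Disjointness then follows. Representatives $M_v,M_{v'}$ can share an orbit only if these normalized pairs coincide, i.e. either $2v/(1+v)=2v'/(1+v')$, forcing $v=v'$, or the pair is swapped, which a short calculation shows is equivalent to $v'=(1-v)/(1+3v)$. This map is an involution of $(0,1)$ with unique fixed point $v=1/3$ that carries $(0,\tfrac13]$ bijectively onto $[\tfrac13,1)$; hence distinct $v$ in $(0,\tfrac13]$ give genuinely disjoint orbits, and the index sets $(0,\tfrac13]$ and $[\tfrac13,1)$ enumerate the same family of orbits. One checks separately that $M_v\succ0$ exactly for $v\in(0,1)$, so the endpoints $0$ and $1$ are correctly excluded.

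The remaining and hardest step is surjectivity: every $M\in\mathcal{M}_P$ is $\mathcal{A}$-equivalent to some $M_v$. The invariant of $M$ is a ratio of two positive numbers, and $v\mapsto 2v/(1-v)$ already shows every such ratio is attained by some $v\in(0,1)$; what remains is completeness, namely that equal invariants force equal orbits. I would prove this by simultaneous diagonalization: choosing $R,R'$ with $R^\top M R=R'^\top M_v R'=I$ and $R^\top D R=R'^\top D R'=\Delta$ (the common diagonal of eigenvalues), the matrix $A=(R'^\top)^{-1}R^\top$ satisfies $AMA^\top=M_v$ and $ADA^\top=D$, hence lies in the congruence stabilizer $\{A:ADA^\top=D\}=\mathrm{O}(D)$. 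The obstacle is that a priori $A$ only lies in $\mathrm{O}(D)$, whereas I need $A\in\mathcal{A}$. Here I would use that $\mathcal{A}$ is a $4$-dimensional subgroup of the conformal group $\mathrm{CO}(D)=\mathbb{R}_{>0}\times\mathrm{O}(D)$ of the same dimension, hence an open subgroup filling a union of its components, and that the residual freedom in the diagonalization — the sign flips $\diag(\pm1,\pm1,\pm1)$ fixing both $I$ and $\Delta$, together with the permutation swapping the two like-signed eigenvalues — suffices to move $A$ into $\mathcal{A}$. Identifying precisely which components of $\mathrm{CO}(D)$ are covered by $\mathcal{A}$, and checking the discrete adjustments reach them, is the crux; it is also exactly the mechanism producing the two-to-one folding $v\mapsto(1-v)/(1+3v)$ and the distinguished role of the D-optimal value $v=1/3$. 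An alternative route to surjectivity represents $M$ by a three-point measure via the canonical (Karlin--Studden) form and exploits the sharp $3$-transitivity of the M\"obius group on $\mathbb{RP}^1$ to normalize the support to a symmetric triple, though this variant needs extra care to symmetrize the weights simultaneously.
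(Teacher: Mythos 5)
Your invariant-theoretic half is correct and is genuinely different from (and cleaner than) the paper's treatment of disjointness. The identity $ADA^\top=(ad-bc)^2D$ does follow from the classical covariance of the discriminant under the substitution action, the trace identity $\tr(D^{-1}M)=0$ is exactly the Hankel condition $m_{1,3}=m_{2,2}=m_{3,1}$, and your computation $\det(M_v-\mu D)=(v-\mu)\bigl(1-(v+2\mu)^2\bigr)$ is right, giving the normalized positive pair $\bigl\{2v/(1+v),\,(1-v)/(1+v)\bigr\}$. Since group orbits are equal or disjoint, this invariant immediately yields disjointness over $(0,\tfrac13]$, whereas the paper obtains the corresponding statement (the ``only if'' direction of Lemma \ref{lem:vv'}) by explicitly solving $A M_v A^\top=M_{v'}$ case by case.

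However, there is a genuine gap: everything that makes the theorem hard is deferred to your final paragraph and not actually proved. Surjectivity--completeness (every $M\in\mathcal{M}_P$ lies on some orbit $\mathcal{A}(M_v)$, equivalently equal invariants force equal orbits) is precisely the content of the paper's Lemma \ref{lem:v}, which the paper proves in three steps (Lemmas \ref{lem:v1}--\ref{lem:v3}), the middle one requiring a sign-change argument for a degree-six polynomial together with resultant computations and the positivity Lemma \ref{lem:h}. Your simultaneous-diagonalization route produces an $A$ with $AMA^\top=M_v$ and $ADA^\top=D$, but only in the full pseudo-orthogonal group $\mathrm{O}(D)\cong\mathrm{O}(2,1)$, which has four connected components, while $\mathcal{A}\cap\mathrm{O}(D)$ (the image of $\{ad-bc=\pm1\}$ under the symmetric-square representation) covers only two of them. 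You name this as ``the crux'' but do not resolve it, and without it neither the covering claim nor the relation $M_v\sim M_{(1-v)/(1+3v)}$ (needed to pass between the index sets $(0,\tfrac13]$ and $[\tfrac13,1)$) is established. For the record, the gap is fillable: one can check that $\mathcal{A}\cap\mathrm{O}(D)$ is exactly the orthochronous subgroup, since the image of $\mathrm{SL}(2,\R)$ is the identity component and $A(1,0,0,-1)=\diag(1,-1,1)$ from \eqref{A3} is a determinant $-1$ element fixing the negative eigenvector $(1,0,1)^\top$ of $D$; and for distinct pencil eigenvalues (i.e., $v\ne\tfrac13$) the joint stabilizer of $(M_v,D)$ consists of the eight sign flips in the common eigenbasis, among which those reversing the sign of the negative eigenvector are non-orthochronous, so any $A\in\mathrm{O}(D)$ can be corrected into $\mathcal{A}$ by left multiplication by such a flip (the degenerate case $v=\tfrac13$ has a larger stabilizer and causes no difficulty). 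Until this component bookkeeping is written out, your argument proves strictly less than the paper's Lemmas \ref{lem:v} and \ref{lem:vv'}, and hence does not yet prove Theorem \ref{thm:orbital}.
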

\begin{proof}
This is a consequence of Lemmas \ref{lem:v} and \ref{lem:vv'} below.
\end{proof}

\begin{lem}
\label{lem:v}
For any $M\in \mathcal{M}_P$, there exist $0<v<1$ and $A\in \mathcal{A}$ such that $A M A^\top = M_v$.
\end{lem}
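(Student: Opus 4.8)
Let me sketch a proof of Lemma~\ref{lem:v}: for any $M\in\mathcal{M}_P$ there exist $0<v<1$ and $A\in\mathcal{A}$ such that $AMA^\top = M_v$, where $M_v$ is the special Hankel matrix in~(\ref{Mv}).

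**The plan.** The plan is to use the group action to bring an arbitrary $M$ into the normal form $M_v$ by successively killing off the ``extra'' degrees of freedom. Since $M\in\mathcal{M}_P$ is a $3\times 3$ positive definite Hankel matrix, it carries $2n-1=5$ free parameters $(m_0,\ldots,m_4)$, while $M_v$ has only one parameter $v$. The group $\mathcal{A}$ is a representation of $\mathrm{GL}(2,\R)$, which is four-dimensional, so dimensionally $5-4=1$ is exactly the expected dimension of the orbit space, and we should be able to reach $M_v$ for a suitable $v$. The natural way to organize the reduction is through the canonical parameterization of $\mathcal{M}_P$: by Proposition~\ref{prop:hankel} and the Karlin--Studden representation, $M$ is a positive combination $M=\sum_{i=1}^{n-1}w_i f_P(x_i)f_P(x_i)^\top + w_0 f_P(x_0)f_P(x_0)^\top$ with $n=3$, i.e.\ a sum of three rank-one terms supported at distinct nodes $x_0,x_1,x_2\in\overline{\R}$.

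**Key steps.** First I would use the M\"obius action on the support. By Theorem~\ref{thm:action}, applying $A(a,b,c,d)$ to $M$ amounts to moving the support points by the M\"obius transformation $x_i\mapsto\varphi(x_i;a,b,c,d)$ and rescaling the weights by the factors appearing in~(\ref{vw}). A M\"obius transformation of $\overline{\R}$ is triply transitive, so I can choose $(a,b,c,d)$ to send the three distinct nodes $(x_0,x_1,x_2)$ to the symmetric configuration $(-\infty\text{ or }\infty,\ 0,\ +\infty)$ --- more precisely to any prescribed symmetric triple such as $(-u,0,u)$, after which one is left only with adjusting the three weights. Second, using the scale invariance $M\mapsto kM$ (which corresponds to the factor $k>0$ in the decomposition of $\mathcal{A}$ in Proposition~\ref{prop:decomposition}) I normalize so that $m_0=1$. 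The remaining orthogonal subgroup $\{A(\pm s,\mp t,t,s)\mid s^2+t^2=1\}$, isomorphic to $\mathrm{O}(2,\R)$, rotates the configuration and can be used to force the off-diagonal odd moments $m_1=m_3=0$, putting $M$ into the centrally symmetric Hankel shape $\begin{pmatrix} 1 & 0 & m_2 \\ 0 & m_2 & 0 \\ m_2 & 0 & m_4 \end{pmatrix}$. Third, a residual diagonal scaling $A(q,r,0,1)$ with $r=0$ acts as $x\mapsto qx$ and rescales $m_2\mapsto q^2 m_2$, $m_4\mapsto q^4 m_4$ up to an overall factor; choosing $q$ appropriately (together with the overall normalization) forces $m_4=m_0=1$, leaving exactly the one-parameter family $M_v$ with $v=m_2$. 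Finally, positive definiteness of $M_v$, i.e.\ $\det M_v = v(1-v^2)>0$ together with the leading minors, pins $v$ to the open interval $(0,1)$, giving the stated range.

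**Main obstacle.** The hard part will be verifying that the reduction can always be carried out with a \emph{genuine} element of $\mathcal{A}$, i.e.\ that the M\"obius transformation needed to symmetrize the support does not force a degenerate $ad-bc=0$ and that the node at infinity is handled cleanly (the entry $f_P(\pm\infty)=(0,0,1)^\top$ must transform correctly, so I would track the $\lambda$-factors in~(\ref{A}) carefully when a support point passes through $-d/c$). A subtler point is ensuring that after symmetrizing one indeed lands in the \emph{interior} $0<v<1$ rather than on the boundary: the boundary $v\to 0$ or $v\to 1$ corresponds to degenerate (rank-deficient) limits outside $\mathcal{M}_P$, so I would argue that $M\succ 0$ is preserved throughout the action (immediate, since congruence by an invertible $A$ preserves definiteness) and that $M_v\succ 0$ exactly on $(0,1)$. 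The cleanest route is probably to work directly with the five moment coordinates: reduce $(m_0,m_1,m_2,m_3,m_4)$ by the explicit action of $A(q,r,0,1)$ and $A(s,-t,t,s)$ read off from~(\ref{A3}), show the odd moments and the normalization can be eliminated, and identify the single surviving invariant as (a function of) $v$; I would defer the precise interval endpoint $v\le\tfrac13$ versus $v\ge\tfrac13$ to Lemma~\ref{lem:vv'}, which resolves the overlap $\mathcal{A}(M_v)=\mathcal{A}(M_{v'})$ and thereby the two equivalent disjoint-union descriptions in Theorem~\ref{thm:orbital}.
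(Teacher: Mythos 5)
Your overall skeleton---start from the Karlin--Studden representation (\ref{KS-MP}), move the three atoms to a normalized position by a M\"obius element, then finish with scalings---matches the shape of the paper's proof (Lemmas \ref{lem:v1}--\ref{lem:v3} in the Appendix), and your final steps (diagonal scaling $A(q,0,0,1)$, positive definiteness forcing $v\in(0,1)$, deferring the $1/3$ endpoint to Lemma \ref{lem:vv'}) are sound. However, the central step of your reduction contains a genuine gap. After you use triple transitivity to send the support to a symmetric triple $(-u,0,u)$, the weights are \emph{not} adjustable: by (\ref{vw}) the transformed weights are $v_i=(cx_i+d)^{2(n-1)}w_i$, and the requirement that the support land on a symmetric triple pins $d/c$ (the only remaining continuous freedom, the value of $u$, rescales positions but leaves every ratio $v_i/v_j$ unchanged). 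For support $(-u,0,u)$ with weights $(v_1,v_2,v_3)$ one has $m_1=u(v_3-v_1)$ and $m_3=u^3(v_3-v_1)$, so the odd moments vanish only if the two endpoint weights coincide; for generic $M$ they do not (for instance, in the case where the atom at $\pm\infty$ is sent to $0$, the condition is exactly $w_1=w_2$ in the original representation, which is a restriction on $M$, not something you can arrange). Your fallback---using the orthogonal subgroup $\{A(\pm s,\mp t,t,s)\}$ to force $m_1=m_3=0$---cannot close the gap either: it is a one-parameter group asked to satisfy two conditions, and applying it destroys the symmetric support just arranged. In effect the sequential plan spends more than the $\dim\mathcal{A}=4$ degrees of freedom actually available.

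What is missing is precisely the content of the paper's Lemma \ref{lem:v2}, which is where essentially all the work of Lemma \ref{lem:v} lies: after the affine normalization of the atoms to $(0,1,\infty)$ (the analogue of your first step, done in Lemma \ref{lem:v1}), one must prove that a genuine M\"obius element exists solving the two polynomial equations $(AMA^\top)_{1,2}=(AMA^\top)_{2,3}=0$ with $ad-bc\ne 0$. The paper does this by eliminating $b$, exhibiting a real root of the resulting sextic $f(c;u,v)$ through sign considerations at $c=0$, $c=-1$, and $c=\pm\infty$, and then ruling out degeneracies via the resultants $R(f,f_1)$, $R(f,f_2)$ and the auxiliary Lemma \ref{lem:h}, with the special case $u=2$ treated separately. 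Your approach could in principle be repaired by letting the three-atom representation of $M$ vary over its one-parameter family of canonical representations (not only the Karlin--Studden one with an atom at $\pm\infty$): the endpoint-weight-equality condition then becomes one equation in that representation parameter, and $m_0=m_4$ one equation in $u$. But establishing that the first equation has a solution is again a real-root existence argument of comparable difficulty to the paper's, so it cannot simply be asserted as ``adjusting the weights.''
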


\begin{lem}
\label{lem:vv'}
$M_{v'}\sim M_v$ if and only if $v'=v$ or $v'=(1 - v)/(1 + 3 v)$.
\end{lem}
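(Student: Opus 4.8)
The plan is to attach to each $M\in\mathcal{M}_P$ a binary quartic covariant and thereby collapse the four-parameter minimization into a one-variable monotonicity fact. Writing $M=\sum_i w_i f_P(x_i)f_P(x_i)^\top$, I would introduce the \emph{moment quartic}
\[
 \Phi_M(s,t)=\sum_i w_i (s+t x_i)^4=\sum_{k=0}^{4}\binom{4}{k} m_k\, s^{4-k}t^k ,
\]
whose coefficient Hankel matrix is exactly $M$. Using the pushforward description in the proof of Theorem~\ref{thm:action} (with $y_i=\varphi(x_i)$ and $v_i=\lambda(x_i)^{-2}w_i=(cx_i+d)^4 w_i$ for $n=3$) and the identity $(cx_i+d)(s+t\varphi(x_i))=(cs+at)x_i+(ds+bt)$, I would check that
\[
 \Phi_{AMA^\top}(s,t)=\Phi_M(ds+bt,\,cs+at),
\]
a linear substitution of determinant $ad-bc$. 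Hence $\Phi$ is a genuine binary-quartic covariant, so the two classical invariants $I=m_0m_4-4m_1m_3+3m_2^2$ and $J=\det M$ are relative invariants with $I\mapsto(ad-bc)^4 I$ and $J\mapsto(ad-bc)^6 J$, and the absolute invariant $j(M)=I^3/J^2$ is constant on every $\mathcal{A}$-orbit. For $M=M_v$ one reads off $I=1+3v^2$ and $J=\det M_v=v(1-v^2)$, so $j(M_v)=(1+3v^2)^3/\{v^2(1-v^2)^2\}$.

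For the ``if'' direction I would produce an explicit group element. Taking $A=A(s,-t,t,s)$ with $s=t=1/\sqrt{2}$ (the M\"obius map $x\mapsto(x-1)/(x+1)$) and evaluating $A M_v A^\top$ directly gives $\tfrac{1+3v}{2}\,M_{v'}$ with $v'=(1-v)/(1+3v)$. Since $\mathcal{A}$ is closed under positive scalars (Proposition~\ref{prop:decomposition}), the factor $\tfrac{1+3v}{2}$ is absorbed and $M_{v'}\sim M_v$; the case $v'=v$ is trivial. For the ``only if'' direction, $M_{v'}\sim M_v$ forces $j(M_{v'})=j(M_v)$, so it remains to show that $v\mapsto j(M_v)$ is exactly two-to-one on $(0,1)$. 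I would compute
\[
 \frac{\dd}{\dd v}\log j(M_v)=\frac{18v^2-2}{v(1+3v^2)(1-v^2)},
\]
whose numerator vanishes on $(0,1)$ only at $v=\tfrac13$. Thus $j(M_v)$ strictly decreases on $(0,\tfrac13]$ and strictly increases on $[\tfrac13,1)$, so each value is attained at most twice; as the explicit element above already exhibits $\tau(v)=(1-v)/(1+3v)$ as a second point with the same value of $j$, the two preimages are precisely $\{v,\tau(v)\}$, which is the assertion.

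The main obstacle is the bookkeeping needed to establish the covariance of $\Phi$, namely getting the automorphy factor $(cx+d)^4$ and the substitution matrix $\begin{pmatrix} d & b\\ c & a\end{pmatrix}$ right; once that is in hand, everything else reduces to the single unimodality computation, which simplifies pleasantly to the numerator $18v^2-2$. I would flag one trap: the quartic $f_P(x)^\top\mathrm{adj}(M)f_P(x)$ that appears as the denominator $h_0(x)$ in the $n=3$ volume integrand is \emph{not} the right covariant to use here, since its absolute invariant turns out to be four-to-one on $(0,1)$ and would leave two spurious candidates; one must use the moment quartic $\Phi_M$, whose invariant is genuinely two-to-one and matches the involution $\tau$.
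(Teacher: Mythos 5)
Your proof is correct, and it takes a genuinely different route from the paper's. The paper argues by direct elimination: it writes out the equation $M_{v'}=A M_v A^\top$ entrywise, solves the $(2,1)$ equation for $b$, substitutes into the $(3,2)$ equation, which factors into the cases $c=0$, $c=\pm d$, $v=1/3$ (with $d=0$ treated separately), and then imposes $(A M_v A^\top)_{1,1}=(A M_v A^\top)_{3,3}=1$ in each case and reads off $v'=(A M_v A^\top)_{3,1}$. You instead pass to the binary quartic $\Phi_M$ whose catalecticant is $M$, use its $\mathrm{GL}_2$-covariance to make $j=I^3/J^2$ an absolute orbit invariant, and finish with a one-variable monotonicity argument plus one explicit group element; I checked the key computations ($A M_v A^\top=\tfrac{1+3v}{2}M_{v'}$ for the map $x\mapsto (x-1)/(x+1)$, the value $j(M_v)=(1+3v^2)^3/\{v^2(1-v^2)^2\}$, and the logarithmic derivative with numerator $18v^2-2$), and they are all right; your explicit element is, up to the scalar that $\mathcal{A}$ absorbs, the same as the paper's case (ii) solution $(a,b,c,d)=(1,-1,1,1)/(2+6v)^{1/4}$. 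The trade-off is this: your argument is conceptually cleaner, avoids the case analysis and the resultant-type bookkeeping, and explains why exactly one extra identification $v\mapsto(1-v)/(1+3v)$ can occur; the paper's heavier computation, on the other hand, produces the complete list of transporters $(a,b,c,d)$ carrying $M_v$ to $M_{v'}$, which it then recycles to determine the stabilizers in Theorem \ref{thm:stabilizer} --- information your invariant argument does not yield. One small point to tighten: a general $M\in\mathcal{M}_P$ may need a point at infinity in its Karlin--Studden representation, so the derivation of the covariance identity from $M=\sum_i w_i f_P(x_i)f_P(x_i)^\top$ is not fully general as written; but since both sides of $\Phi_{AMA^\top}(s,t)=\Phi_M(ds+bt,cs+at)$ are linear in $M$ and the rank-one matrices $f_P(x)f_P(x)^\top$, $x\in\R$, span the space of $3\times 3$ Hankel matrices, the identity extends immediately (and in any case you only need it for $M=M_v$).
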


The proofs of Lemmas \ref{lem:v} and \ref{lem:vv'} are given in Appendix \ref{subsec:v} and \ref{subsec:vv'}, respectively.
Note that the map
\[
 v \mapsto \frac{1 - v}{1 + 3 v}
\]
defines a one-to-one correspondence between $(1,1/3)$ and $(1/3,1)$, and
$v=1/3$ is the fixed point of this map.

The stabilizer of $\mathcal{A}$ at $M\in \mathcal{M}_P$ is defined as
\[
 \mathcal{A}_M = \bigl\{ A\in \mathcal{A} \mid A M A^\top = M \bigr\}.
\]
This is a subgroup of $\mathcal{A}$.

\begin{thm}
\label{thm:stabilizer}
When $v\ne 1/3$,
\[
 \mathcal{A}_{M_v} = \{A(\pm 1,0,0,\pm 1),A(0,\pm 1,\pm 1,0) \} =
 \left\{ \begin{pmatrix} 1 & 0 & 0 \\ 0 & \pm 1 & 0 \\ 0 & 0 & 1 \end{pmatrix},
         \begin{pmatrix} 0 & 0 & 1 \\ 0 & \pm 1 & 0 \\ 1 & 0 & 0 \end{pmatrix}
 \right\},
\]
and when $v=1/3$,
\[
 \mathcal{A}_{M_v}
= \{ A(s,-t,t,s) \mid s^2+t^2=1 \} \sqcup \{ A(-s,t,t,s) \mid s^2+t^2=1 \}.
\]
In particular,
\[
 \dim \mathcal{A}_{M_v} = \begin{cases}
   0 & (v\in (0,\frac{1}{3})\cup (\frac{1}{3},1)), \\
   1 & (v=\frac{1}{3}). \end{cases}
\]
\end{thm}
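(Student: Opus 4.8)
The plan is to reinterpret $A M_v A^\top = M_v$ as a condition that $A$ simultaneously preserves two quadratic forms, and then to read off both the dimension and the explicit group elements from the spectrum of a single operator. First I would rewrite the stabilizer condition as $A^\top G A = G$ with $G := M_v^{-1}\succ 0$, so that $\mathcal A_{M_v}=\mathcal A\cap \mathrm O(G)$. Taking determinants gives $\det(A)^2=1$, and since $\det A(a,b,c,d)=(ad-bc)^3$ this forces $ad-bc=\pm1$. The key extra ingredient is a second, fixed form preserved by all of $\mathcal A$: from $f_P(\varphi(x;a,b,c,d))=\lambda(x)\,A f_P(x)$ one gets $A^\top D_B A=(ad-bc)^2 D_B$ for the form $B$ defined by $B(f_P(x),f_P(y))=(x-y)^2$, whose Gram matrix in the monomial basis is $D_B=\left(\begin{smallmatrix}0&0&1\\0&-2&0\\1&0&0\end{smallmatrix}\right)$. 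On the locus $ad-bc=\pm1$ this reads $A^\top D_B A=D_B$, so
\[
 \mathcal A_{M_v}=\mathcal A\cap \mathrm O(G)=\mathcal A\cap\bigl(\mathrm O(G)\cap\mathrm O(D_B)\bigr).
\]

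The simultaneous isometries of the pencil $(G,D_B)$ are governed by $T:=G^{-1}D_B=M_v D_B$, which is self-adjoint with respect to $G$ (indeed $GT=D_B$ is symmetric). A direct computation gives
\[
 T=M_v D_B=\begin{pmatrix} v & 0 & 1\\ 0 & -2v & 0\\ 1 & 0 & v\end{pmatrix},
\]
with eigenvalues $v+1,\ v-1,\ -2v$ on $(1,0,1)^\top,(1,0,-1)^\top,(0,1,0)^\top$. Since $G$ is definite, a nonzero $G$-skew endomorphism commuting with $T$ exists exactly when some eigenspace has dimension $\ge 2$; in $(0,1)$ the eigenvalues coincide only at $v=1/3$ (where $v-1=-2v=-2/3$). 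This gives $\dim\mathcal A_{M_v}=0$ for $v\ne 1/3$ and $=1$ for $v=1/3$, and the one-dimensional part is the $\mathrm{SO}(2)$ on the two-dimensional eigenspace, which I would identify with $\{A(s,-t,t,s)\mid s^2+t^2=1\}$ via the direct check $A(s,-t,t,s)M_{1/3}A(s,-t,t,s)^\top=M_{1/3}$ (equivalently, $M_{1/3}\propto (B^\top B)^{-1}$ together with Proposition \ref{prop:guest}).

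To pin down the discrete part, note that $\mathrm O(G)\cap\mathrm O(D_B)$ consists of the maps that are diagonal in the $G$-orthogonal eigenbasis of $T$; when the eigenvalues are simple these are the eight sign-flips $\mathrm{diag}(\pm1,\pm1,\pm1)$. Intersecting with $\mathcal A$ removes half of them: the flip negating only $(1,0,1)^\top$ equals $\left(\begin{smallmatrix}0&0&-1\\0&1&0\\-1&0&0\end{smallmatrix}\right)$, and matching with $A(a,b,c,d)$ would require $d^2=0$ and $c^2=-1$, impossible over $\R$. Hence the sign on the $(1,0,1)^\top$-line is forced to be $+$, leaving exactly the four admissible patterns, which I would match to $A(\pm1,0,0,\pm1)$ and $A(0,\pm1,\pm1,0)$ (the maps $x\mapsto\pm x$ and $x\mapsto\pm1/x$). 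For $v=1/3$ the same impossibility removes any extra component beyond the reflection of the simple $(0,1,0)^\top$-line, namely $\mathrm{diag}(1,-1,1)=A(-1,0,0,1)$, which already lies in the rotation family; so the full stabilizer is exactly $\{A(s,-t,t,s)\}\sqcup\{A(-s,t,t,s)\}$.

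The main obstacle is this last, component-group step: cleanly showing that precisely the four sign-patterns (respectively, the two components at $v=1/3$) are realized inside the Veronese image $\mathcal A$, and that the admissible reflections glue correctly with the circle at $v=1/3$. The reduction to the spectrum of $T=M_v D_B$ is what makes the dimension count and the appearance of the critical value $v=1/3$ transparent; the remaining work is the finite, over-$\R$ check that the ``wrong'' reflections would demand $c^2=-1$ and are therefore excluded.
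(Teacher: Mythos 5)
Your core reduction is correct, and it takes a genuinely different route from the paper. The paper obtains Theorem \ref{thm:stabilizer} as a by-product of the elimination argument in the proof of Lemma \ref{lem:vv'}: solve $(AM_vA^\top)_{2,1}=0$ for $b$, substitute into $(AM_vA^\top)_{3,2}=0$, factor, and split into the cases $c=0$, $c=\pm d$, $v=1/3$, $d=0$. Your argument replaces this by structure: every $A\in\mathcal{A}$ satisfies $A^\top D_B A=(ad-bc)^2D_B$ for the fixed form $D_B$ representing $(x,y)\mapsto(x-y)^2$ (I verified this identity, and the determinant step forcing $ad-bc=\pm1$), so $\mathcal{A}_{M_v}=\mathcal{A}\cap\bigl(\mathrm{O}(G)\cap\mathrm{O}(D_B)\bigr)$ with $G=M_v^{-1}$, and everything is read off the spectrum of $T=M_vD_B$, whose eigenvalues $v+1$, $v-1$, $-2v$ coincide in $(0,1)$ exactly at $v=1/3$. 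This explains conceptually \emph{why} $1/3$ is the critical value, something the paper's brute-force factorization leaves opaque, and it gives the dimension statement almost for free.

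Two loose ends in the component-group step, both fixable but currently real gaps. First, for $v\ne 1/3$ you exclude only one of the four sign patterns negating the $(1,0,1)^\top$-line; you need all four, and at $v=1/3$ you must exclude two entire one-parameter components of elements acting as $-1$ on $(1,0,1)^\top$, which are not diagonal in the eigenbasis, so your single-matrix check ($d^2=0$, $c^2=-1$) does not literally cover them. The uniform fix: $A(a,b,c,d)(1,0,1)^\top=(c^2+d^2,\,ac+bd,\,a^2+b^2)^\top$ has first entry $c^2+d^2>0$, so no element of $\mathcal{A}$ can send $(1,0,1)^\top$ to $-(1,0,1)^\top$; this one observation kills everything with a $-$ sign on that line, in both cases at once. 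Second, your $v=1/3$ discussion contains two slips: the simple eigenline there is $(1,0,1)^\top$ (eigenvalue $4/3$), not $(0,1,0)^\top$, which lies in the double eigenspace; and $\mathrm{diag}(1,-1,1)=A(-1,0,0,1)$ does \emph{not} lie in the rotation family $\{A(s,-t,t,s)\}$ (that would need $s^2=-1$) but in the reflection family $\{A(-s,t,t,s)\}$ at $(s,t)=(1,0)$. As written, your last sentence would produce only one circle. The correct gluing: $\{A(s,-t,t,s)\}$ is a compact connected one-dimensional subgroup of $\mathcal{A}_{M_{1/3}}$, hence equals the rotation component of the $\mathrm{O}(2)$ acting on the double eigenspace, and $\{A(-s,t,t,s)\}=A(-1,0,0,1)\cdot\{A(s,-t,t,s)\}$ fills the reflection coset; with the $-1$ half excluded as above, these two circles exhaust the stabilizer.
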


\begin{proof}
The proof follows from the proof of Lemma \ref{lem:vv'}.
The details are omitted.
\end{proof}

\begin{thm}
\label{thm:orbit}
The dimension of orbit $\mathcal{A}(M_v)$ passing through $M_v$ is
\[
 \dim \mathcal{A}(M_v) = \begin{cases}
   4 & (v\in (0,\frac{1}{3})\cup (\frac{1}{3},1)), \\
   3 & (v=\frac{1}{3}). \end{cases}
\]
\end{thm}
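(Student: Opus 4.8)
The plan is to deduce the result from the orbit--stabilizer theorem, so that the entire computation reduces to knowing $\dim\mathcal{A}$ together with the stabilizer dimensions already established in Theorem~\ref{thm:stabilizer}.

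First I would establish that $\mathcal{A}$ is a Lie group with $\dim\mathcal{A}=4$. Since $\mathcal{A}$ is a representation of $\mathrm{GL}(2,\R)$, it suffices to check that the parameterization $(a,b,c,d)\mapsto A(a,b,c,d)$ has discrete kernel. Because $\varphi(x;ka,kb,kc,kd)=\varphi(x;a,b,c,d)$ and $\lambda(x;ka,kb,kc,kd)=k^{-(n-1)}\lambda(x;a,b,c,d)$, definition (\ref{A}) forces $A(ka,kb,kc,kd)=k^{n-1}A(a,b,c,d)$; for $n=3$ a scalar $k$ fixes $A$ only when $k^2=1$, i.e. $k=\pm 1$. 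Hence the kernel is the discrete group $\{\pm I_2\}$, and $\mathcal{A}$ has the same dimension as $\mathrm{GL}(2,\R)$, namely $4$.

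Next I would invoke the orbit--stabilizer theorem. By Theorem~\ref{thm:action}, the map $(A,M)\mapsto A M A^\top$ is a smooth action of the Lie group $\mathcal{A}$ on the cone $\mathcal{M}_P$, and $M_v\succ 0$ lies in its interior for $0<v<1$. The orbit is therefore an immersed submanifold diffeomorphic to $\mathcal{A}/\mathcal{A}_{M_v}$, whence
\[
 \dim\mathcal{A}(M_v)=\dim\mathcal{A}-\dim\mathcal{A}_{M_v}=4-\dim\mathcal{A}_{M_v}.
\]
Equivalently, one can identify the tangent space of $\mathcal{A}(M_v)$ at $M_v$ with the image of the linear map $X\mapsto X M_v+M_v X^\top$ defined on the four-dimensional Lie algebra of $\mathcal{A}$, whose kernel is precisely the Lie algebra of $\mathcal{A}_{M_v}$; rank--nullity then yields the same formula.

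Finally, substituting the stabilizer dimensions from Theorem~\ref{thm:stabilizer}, namely $\dim\mathcal{A}_{M_v}=0$ for $v\in(0,\frac{1}{3})\cup(\frac{1}{3},1)$ and $\dim\mathcal{A}_{M_v}=1$ for $v=\frac{1}{3}$, gives $\dim\mathcal{A}(M_v)=4$ and $3$ respectively. I do not anticipate a genuine obstacle, since all the geometric content is carried by Theorem~\ref{thm:stabilizer}; the only point requiring care is confirming the local faithfulness (discrete kernel) of the representation, which guarantees $\dim\mathcal{A}=4$ rather than something smaller.
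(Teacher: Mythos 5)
Your proposal is correct, but it proves the theorem by a genuinely different route than the paper. The paper never invokes the orbit--stabilizer theorem to \emph{prove} this result: it computes the tangent space of the orbit directly, by writing the action on moment coordinates $m=(m_0,\ldots,m_4)^\top$ as $\widetilde m = F(a,b,c,d)\,m$, differentiating $F$ at the identity $(1,0,0,1)$, applying the resulting $5\times 4$ Jacobian to $m_v=(1,0,v,0,1)^\top$, and checking that its rank is $4$ for $v\ne\frac{1}{3}$ and $3$ for $v=\frac{1}{3}$; the relation $\dim\mathcal{A}_{M_v}+\dim\mathcal{A}(M_v)=\dim\mathcal{A}$ is then only noted afterwards as a consistency check with Theorem \ref{thm:stabilizer}. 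You invert this logic: you take Theorem \ref{thm:stabilizer} as input and deduce the orbit dimension from $\dim\mathcal{A}(M_v)=\dim\mathcal{A}-\dim\mathcal{A}_{M_v}$. This is legitimate (no circularity: Theorem \ref{thm:stabilizer} is proved via Lemma \ref{lem:vv'}, independently of the orbit computation), and it buys you freedom from the explicit Jacobian computation; what it costs is the independence of the two dimension counts, so the paper's ``as expected'' cross-check disappears, and you must import the Lie-theoretic orbit--stabilizer machinery together with the fact that the action on the open cone $\mathcal{M}_P$ of positive definite Hankel matrices (Proposition \ref{prop:hankel}) is smooth. One small point to tighten: to conclude that the kernel of $(a,b,c,d)\mapsto A(a,b,c,d)$ is discrete you check only scalar matrices $kI_2$; you should first observe that the kernel is contained in the scalars (e.g.\ because $A$ determines $\varphi$ via $\varphi(x)=(Af_P(x))_2/(Af_P(x))_1$, and the kernel of $g\mapsto\varphi_g$ is exactly the scalar matrices), or alternatively note that any positive-dimensional normal subgroup of $\mathrm{GL}(2,\R)$ would contain either $\R_{>0}I_2$ or $\mathrm{SL}(2,\R)$, both of which are visibly not killed by $A$. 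With that line added, your argument is complete and gives $\dim\mathcal{A}(M_v)=4-0=4$ for $v\in(0,\frac{1}{3})\cup(\frac{1}{3},1)$ and $4-1=3$ for $v=\frac{1}{3}$, as claimed.
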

\begin{proof}
Let $M=(m_{i+j-2})_{i,j=1,2,3}$ and
$\widetilde M=(\widetilde m_{i+j-2})_{i,j=1,2,3}$
be $3\times 3$ Hankel matrices, and let $A=A(a,b,c,d)$ ($3\times 3$ matrix in (\ref{A3})).
Assume that $\widetilde M = A M A^\top$.
Picking up the $(1,1)$, $(1,2)$, $(1,3)$, $(2,3)$, and $(3,3)$ elements and rearranging them,
we have $\widetilde m = F m$, where
$m=(m_0,m_1,m_2,m_3,m_4)^\top$,
$\widetilde m=(\widetilde m_0,\widetilde m_1,\widetilde m_2,\widetilde m_3,\widetilde m_4)^\top$,
and
\begin{align*}
 F &= F(a,b,c,d) \\
 &= \begin{pmatrix}
 d^4 & 4 c d^3 & 6 c^2 d^2 & 4 c^3 d & c^4 \\
 b d^3 & a d^3+3 b c d^2 & 3 b d c^2+3 a d^2 c & b c^3+3 a d c^2 & a c^3 \\
 b^2 d^2 & 2 c d b^2+2 a d^2 b & b^2 c^2+4 a b d c+a^2 d^2 & 2 c d a^2+2 b c^2 a & a^2 c^2 \\
 b^3 d & c b^3+3 a d b^2 & 3 b d a^2+3 b^2 c a & d a^3+3 b c a^2 & a^3 c \\
 b^4 & 4 a b^3 & 6 a^2 b^2 & 4 a^3 b & a^4 \\
\end{pmatrix}.
\end{align*}
Let $m_v=(1,0,v,0,1)^\top$.
The tangent space of the orbit at $M_v$ is spanned by the four column vectors of
\[
 \Bigl(\frac{\partial}{\partial a},\frac{\partial}{\partial b},\frac{\partial}{\partial c},\frac{\partial}{\partial d}\Bigr)F(1,0,0,1) m_v
= \begin{pmatrix}
 0 & 0 & 0 & 4 \\
 0 & 1 & 3 v & 0 \\
 2 v & 0 & 0 & 2 v \\
 0 & 3 v & 1 & 0 \\
 4 & 0 & 0 & 0 \\
\end{pmatrix}.
\]
The rank of this matrix is 4 when $v\ne 1/3$ and 3 when $v=1/3$.
\end{proof}

From Theorems \ref{thm:stabilizer} and \ref{thm:orbit}, we see that
$\dim\mathcal{A}_{M_v} + \dim\mathcal{A}(M_v) = 4 = \dim\mathcal{A}$
as expected (e.g., \cite{Kawakubo92}).

\subsection{Minimization over cross-section}

From the orbital decomposition (Theorem \ref{thm:orbital}) and
the invariance of the volume on an orbit (Theorem \ref{thm:invariance}),
the optimization problem is reduced to the minimization of $\Vol_1(\gamma_{M_v^{-1}})$ with respect to $v$, where $M_v$ is defined in (\ref{Mv}).
The range of $v$ is taken to be $(0,\frac{1}{3}]$ or $[\frac{1}{3},1)$.
We write $\Vol_1(\gamma_{M_v^{-1}})=\len(v)$ shortly.
From the definition (\ref{len}),
\[
 \len(v) = \Vol_1\bigl(\gamma_{M_v^{-1}}\bigr) = 2 \int_{-\infty}^\infty s(x;v) \dd x,
\]
where
\[
 s(x;v) = \frac{\sqrt{\frac{1-v^2}{v}}\sqrt{1+6 v x^2+x^4}}{1+(\frac{1}{v}-3 v) x^2+x^4}.
\]
Note that $\len(v)$ is an elliptic integral.
The following is the main theorem of this section.

\begin{thm}
\label{thm:minimizer}
The minimizer of $\Vol_1(\gamma_{M^{-1}})$ in (\ref{len}) over $M\in \mathcal{M}_P$
is given if and only if $M$ is in the orbit
\[
 M \sim M_{1/3}=
 \begin{pmatrix} 1 & 0 & \frac{1}{3} \\ 0 & \frac{1}{3} & 0 \\ \frac{1}{3} & 0 & 1 \end{pmatrix}.
\]
The minimum volume is $4\pi \sqrt{2/3}$.
\end{thm}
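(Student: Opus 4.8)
The plan is to exploit the reduction, already in place just before the statement, of the four-parameter minimization over $\mathcal{M}_P$ to the single-variable minimization of $\len(v)=\Vol_1(\gamma_{M_v^{-1}})$. By the orbital decomposition of Theorem~\ref{thm:orbital} together with the invariance of Theorem~\ref{thm:invariance}, every $M\in\mathcal{M}_P$ has the same volume as exactly one $M_v$ with $v\in(0,\tfrac13]$, so it suffices to prove that $\len(v)$ attains a strict global minimum on $(0,\tfrac13]$ at $v=\tfrac13$, and to compute its value there.

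First I would evaluate $\len(\tfrac13)$ directly. Substituting $v=\tfrac13$ into the integrand $s(x;v)$ gives $1+6vx^2+x^4=(1+x^2)^2$ in the numerator and $1+(\tfrac1v-3v)x^2+x^4=(1+x^2)^2$ in the denominator, while $\sqrt{(1-v^2)/v}=2\sqrt{2/3}$. Hence $s(x;\tfrac13)=2\sqrt{2/3}\,/(1+x^2)$, and
\[
 \len(\tfrac13)=2\int_{-\infty}^{\infty}\frac{2\sqrt{2/3}}{1+x^2}\,\dd x = 4\pi\sqrt{2/3},
\]
the claimed minimal value.

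The remaining and principal task is to show that $v=\tfrac13$ is the \emph{strict} minimizer. I would first record a symmetry: by Lemma~\ref{lem:vv'}, $M_{(1-v)/(1+3v)}\sim M_v$, so $\len$ is invariant under the involution $v\mapsto(1-v)/(1+3v)$ whose unique fixed point is $v=\tfrac13$; in particular $\len'(\tfrac13)=0$. To upgrade this to a genuine strict minimum I would show that $\len$ is strictly decreasing on $(0,\tfrac13)$. Differentiating under the integral sign, $\len'(v)=2\int_{-\infty}^{\infty}\partial_v s(x;v)\,\dd x$, and the goal is to prove this is negative for $v\in(0,\tfrac13)$. Since $\partial_v s(x;v)$ does not keep a constant sign in $x$, termwise positivity is unavailable; instead I would reduce $\len(v)$ to a combination of the complete elliptic integrals $K$ and $E$ of a modulus $k=k(v)$ determined by the two quartics $1+6vx^2+x^4$ and $1+(\tfrac1v-3v)x^2+x^4$ (both positive for all real $x$ when $v\in(0,1)$), and then analyze the sign of the derivative of the resulting explicit function of $v$ using the standard formulas for $\dd K/\dd k$ and $\dd E/\dd k$.

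The hard part is precisely this last step: because $\len(v)$ is a genuine elliptic integral with no elementary closed form for general $v$, establishing strict monotonicity on $(0,\tfrac13)$ requires either a careful reduction to $K$ and $E$ followed by a monotonicity analysis of their combination, or a direct sign analysis of $\int\partial_v s\,\dd x$ after an $x$-substitution that symmetrizes the integrand. Once strict monotonicity is in hand, the strictness of the minimum forces $v=\tfrac13$, and the disjointness asserted in Theorem~\ref{thm:orbital} identifies the minimizing set as exactly the single orbit $\mathcal{A}(M_{1/3})$, which yields the ``if and only if.''
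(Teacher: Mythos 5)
Your reduction to the one-parameter problem on $(0,\tfrac13]$ via Theorems \ref{thm:orbital} and \ref{thm:invariance} is exactly right and matches the paper, as does your direct evaluation $\len(\tfrac13)=4\pi\sqrt{2/3}$ (at $v=\tfrac13$ both quartics collapse to $(1+x^2)^2$ and the integrand becomes $2\sqrt{2/3}/(1+x^2)$). The observation that the involution $v\mapsto(1-v)/(1+3v)$ forces $\len'(\tfrac13)=0$ is also correct, though it only yields criticality, not minimality. However, there is a genuine gap at the heart of the argument: the strict monotonicity of $\len$ on $(0,\tfrac13)$, which you correctly identify as the principal task, is never established. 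You propose to reduce $\len(v)$ to complete elliptic integrals $K$ and $E$ and analyze the sign of the derivative of that combination, but this is left entirely as a plan; it is precisely where all the difficulty of the theorem is concentrated, and it is far from routine --- the modulus $k(v)$, the coefficients multiplying $K$ and $E$, and their $v$-derivatives all have to be controlled simultaneously, and nothing in your sketch indicates that the resulting expression has a tractable sign.

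The paper sidesteps this difficulty with a different device that you may want to note: it never proves monotonicity of $\len$ itself. Instead, using the elementary inequality $1/\sqrt{1+z}\ge 1-z/2$ applied inside the square root of the integrand, it constructs a pointwise lower bound $\underline s(x;v)\le s(x;v)$ that is a \emph{rational} function of $x$, with equality exactly when $v=\tfrac13$. The resulting lower bound $\underline{\len}(v)=2\int\underline s(x;v)\,\dd x$ is then computed in closed form by residues, and its strict monotonicity on $(0,\tfrac13)$ is an elementary calculus fact (the numerator of $\underline{\len}{}'(v)$ is bounded below, via the same square-root inequality again, by $(1-3v)$ times a polynomial that is positive on $(0,\tfrac13)$). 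The sandwich $\len(v)\ge\underline{\len}(v)>\underline{\len}(\tfrac13)=\len(\tfrac13)$ for $v\ne\tfrac13$ then delivers both the minimum value and the uniqueness, entirely avoiding elliptic-integral analysis. To complete your proof you would either need to carry out the $K$/$E$ monotonicity analysis in full, or adopt a bounding argument of this kind.
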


\begin{proof}
Because of Theorem \ref{thm:orbital}, it is enough to take the range $v\in (0,\frac{1}{3}]$.
We use the inequality
\begin{equation}
\label{ineq}
 \frac{1}{\sqrt{1+z}} \ge 1-\frac{z}{2}, \quad |z|<1.
\end{equation}
The equality holds iff $z=0$.
Noting that
\[
\frac{1}{\sqrt{1+6 v x^2+x^4}}
 = \frac{1}{(1+x^2) \sqrt{1- \frac{2(1-3v) x^2}{(1+x^2)^2}}}
 \ge \frac{1}{1+x^2} \Bigl( 1+ \frac{(1-3v) x^2}{(1+x^2)^2} \Bigr),
\]
$s(x;v)$ is bounded below by
\[
 \underline s(x;v)
 = \frac{\sqrt{\frac{1-v^2}{v}}(1+6 v x^2+x^4)}{(1+(\frac{1}{v}-3 v) x^2+x^4)(1+x^2)}\Bigl( 1+ \frac{(1-3v) x^2}{(1+x^2)^2} \Bigr).
\]
Therefore, $\len(v)$ is bounded below by
\[
 \underline{\len}(v) = 2 \int_{-\infty}^\infty \underline s(x;v) \dd x.
\]
This integral can be evaluated by counting the residues.
When $v<1/3$, the poles are
\[
 \pm i x_1 = \pm \frac{i}{2} \biggl(\sqrt{\frac{(1-v)(1+3v)}{v}}-\sqrt{\frac{(1+v)(1-3v)}{v}}\biggr),
\]
\[
 \pm i x_2 = \pm \frac{i}{2} \biggl(\sqrt{\frac{(1-v)(1+3v)}{v}}+\sqrt{\frac{(1+v)(1-3v)}{v}}\biggr),
\]
and $\pm i x_0 = \pm i$.

Denote the residues for $+i x_1$, $+i x_2$, and $+i x_0$ by
$\mathrm{Res}(+i x_1)$, $\mathrm{Res}(+i x_2)$, and $\mathrm{Res}(+i x_0)$, respectively.
Then, the integral is evaluated as
\begin{align*}
\underline{\len}(v)
&= 2\times 2\pi i (\mathrm{Res}(+i x_1)+\mathrm{Res}(+i x_2)+\mathrm{Res}(+i x_0)) \\
&=
\frac{2\pi\sqrt{\frac{1-v^2}{v}}
   \Bigl( 3 v^3+6 v^2-5 v+8\sqrt{\frac{v(1+3v)}{1-v}} \Bigr)}{4(1+v)^2}.
\end{align*}
The derivative is
\begin{equation}
\label{derivative}
 \frac{\dd}{\dd v}\underline{\len}(v) =
-2\pi\frac{5 -38 v +14 v^2 +18 v^3 +9 v^4 +48 v \sqrt{\frac{v (1-v)}{1 +3 v}}}
 {8 (1+v)^2 \sqrt{v(1-v^2)}}.
\end{equation}
By applying inequality (\ref{ineq}),
\[
\sqrt{\frac{v (1-v)}{1 +3 v}}
= \frac{\frac{3}{2}v (1-v)}{\sqrt{\frac{9}{4}v (1-v)(1 +3 v)}}
\ge \frac{3}{2}v (1-v) \Bigl\{ 1 - \frac{1}{2}\Bigl(\frac{9}{4}v (1-v)(1 +3 v) -1\Bigr) \Bigr\}
\]
(the equality holds iff $v=1/3$), the numerator of (\ref{derivative}) is
bounded below by
\begin{align*}
& 5 -38 v +14 v^2 +18 v^3 +9 v^4 +48 v \frac{3}{2}v (1-v) \Bigl\{ 1 - \frac{1}{2}\Bigl(\frac{9}{4}v (1-v)(1 +3 v) -1\Bigr) \Bigr\} \\
&\quad = (1-3 v) (5 - 23 v + 53 v^2 - 12 v^3 - 108 v^4 + 81 v^5),
\end{align*}
which is positive for $0<v<1/3$.
Therefore, $\frac{\dd}{\dd v}\underline{\len}(v)<0$ for $0<v<1/3$, and
$\underline{\len}(v)$ has the unique minimum at $v=1/3$.

Since $s(x;v)\ge\underline s(x;v)$,
\[
 \min_{v\in (0,1/3]}\len(v) \ge \min_{v\in (0,1/3]}\underline{\len}(v) = \underline{\len}(1/3).
\]
Moreover, since $s(x;v)=\underline s(x;v)$ at $v=1/3$,
\[
\min_{v\in (0,1/3]}\len(v) \le \len(1/3) = \underline{\len}(1/3).
\]
Therefore,
\[
 \min_{v\in (0,1/3]}\len(v) = \underline{\len}(1/3) = 4\pi \sqrt{2/3}.
\]
Point $v=1/3$ is the unique minimizer, because this is the unique minimizer of $\underline{\len}(v)$.

Figure \ref{fig:len-lowerbound} depicts the objective function $\len(v)$ and its lower bound $\underline{\len}(v)$ for $v\le 1/3$.
\end{proof}

\begin{figure}[H]
\begin{center}
\scalebox{0.6}{\includegraphics{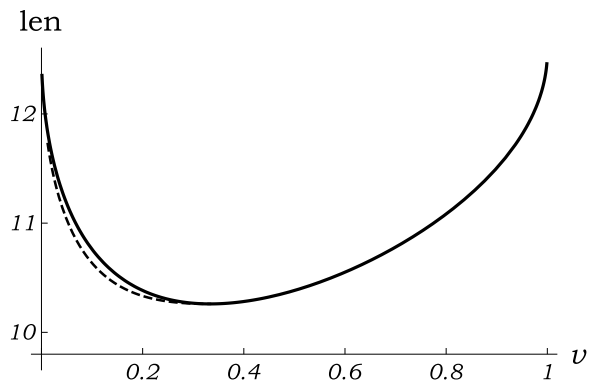}}
\end{center}
\caption{$\len(v)$ (solid line) and its lower bound $\underline{\len}(v)$ for $v\le 1/3$ (dashed line).}
\label{fig:len-lowerbound}
\end{figure}

As shown in (\ref{M3}), the information matrix $M_{1/3}$ is the counterpart of the information matrix for the uniform design in the Fourier regression.

Recall the decomposition of $A(a,b,c,d)$ in Proposition \ref{prop:decomposition}.
We already know from Theorem \ref{thm:stabilizer} that, for $s^2+t^2=1$,
\[
 A(s,-t,t,s) M_{1/3} A(s,-t,t,s)^\top = A(-s,t,t,s) M_{1/3} A(-s,t,t,s)^\top = M_{1/3}.
\]
Moreover,
\[
 A(q,r,0,1) M_{1/3} A(q,r,0,1)^\top
= \begin{pmatrix}
 1 & r & \frac{q^2}{3}+r^2 \\
 r & \frac{q^2}{3}+r^2 & r (q^2+r^2) \\
 \frac{q^2}{3}+r^2 & r (q^2+r^2) & (q^2+r^2)^2
\end{pmatrix}.
\]
Theorem \ref{thm:minimizer} can be written in the following form.
\begin{thm}
The minimizer of $\Vol_1(\gamma_{M^{-1}})$ in (\ref{len}) over $M\in \mathcal{M}_P$
is given when and only when $M$ is of the form:
\[
 M = k
 \begin{pmatrix}
 1 & r & \frac{q^2}{3}+r^2 \\
 r & \frac{q^2}{3}+r^2 & r (q^2+r^2) \\
 \frac{q^2}{3}+r^2 & r (q^2+r^2) & (q^2+r^2)^2
\end{pmatrix}, \quad q\ne 0,\ \ k>0.
\]
\end{thm}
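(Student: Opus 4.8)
The plan is to combine the orbit characterization of the minimizer in Theorem~\ref{thm:minimizer} with the group decomposition of Proposition~\ref{prop:decomposition}. By Theorem~\ref{thm:minimizer}, the set of minimizers of $\Vol_1(\gamma_{M^{-1}})$ over $M\in\mathcal{M}_P$ is exactly the orbit $\mathcal{A}(M_{1/3})=\{A M_{1/3} A^\top \mid A\in\mathcal{A}\}$. Hence it suffices to show that this orbit coincides with the displayed family of matrices.

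First I would write an arbitrary $A\in\mathcal{A}$ using Proposition~\ref{prop:decomposition} as $A=k\,A(q,r,0,1)\,A(\pm s,\mp t,t,s)$ with $k>0$, $q\neq 0$, and $s^2+t^2=1$. By Theorem~\ref{thm:stabilizer} the orthogonal factor lies in the stabilizer $\mathcal{A}_{M_{1/3}}$ and therefore fixes $M_{1/3}$, so that
\[
 A M_{1/3} A^\top = k^2\,A(q,r,0,1)\,M_{1/3}\,A(q,r,0,1)^\top.
\]
Substituting the matrix $A(q,r,0,1)\,M_{1/3}\,A(q,r,0,1)^\top$ computed immediately before the statement then reproduces the displayed form, with the positive scalar $k^2$ serving as the free constant $k$ of the theorem.

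For the converse inclusion I would check that every matrix of the displayed form is attained. Given $q\neq 0$, $r\in\R$, and $k>0$, the element $\sqrt{k}\,A(q,r,0,1)$ again lies in $\mathcal{A}$, because positive scalar matrices belong to $\mathcal{A}$ (indeed $A(\mu,0,0,\mu)=\mu^{2}I_3$) and $\mathcal{A}$ is closed under multiplication; applying it to $M_{1/3}$ yields precisely the stated matrix. Thus the displayed family is exactly $\mathcal{A}(M_{1/3})$, and Theorem~\ref{thm:minimizer} identifies it as the minimizer set.

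This argument is essentially bookkeeping, since all the substantive content is already available: the orbital decomposition, the explicit stabilizer, and the closed-form image $A(q,r,0,1)\,M_{1/3}\,A(q,r,0,1)^\top$. The only point needing a little care is verifying that the parameters $(k,q,r)$ sweep out the entire orbit and nothing extraneous; this is guaranteed by the disjoint-union decomposition of $\mathcal{A}$ in Proposition~\ref{prop:decomposition}, which shows that once $M_{1/3}$ is fixed the orthogonal directions of the stabilizer contribute no further matrices.
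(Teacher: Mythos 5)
Your proposal is correct and follows essentially the same route as the paper: the paper derives this theorem as a direct reformulation of Theorem~\ref{thm:minimizer}, using exactly the ingredients you cite (the decomposition of Proposition~\ref{prop:decomposition}, the stabilizer identity $A(\pm s,\mp t,t,s)M_{1/3}A(\pm s,\mp t,t,s)^\top=M_{1/3}$ from Theorem~\ref{thm:stabilizer}, and the explicit computation of $A(q,r,0,1)M_{1/3}A(q,r,0,1)^\top$). Your write-up merely makes explicit the bookkeeping (including the converse inclusion via $\sqrt{k}\,A(q,r,0,1)\in\mathcal{A}$) that the paper leaves implicit.
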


\begin{rem}
\label{rem:circle}
The minimum tube-volume $M\in\mathcal{M}_P$ is attained when and only when the curve
\[
 (\gamma_{M^{-1}})_+ = \bigl\{ \psi_{M^{-1}}(x) = M^{-\frac{1}{2}}f(x)/\Vert M^{-\frac{1}{2}} f(x)\Vert \mid x \in \mathcal{X}=(-\infty,\infty) \bigr\}
\]
forms a circle.
Moreover, in that case, the circle length is $2\pi\sqrt{2/3}$.
\end{rem}

Finally, we characterize the tube-volume optimal design as a three-point design.
The polynomial design corresponding to the Fourier uniform design is given in (\ref{M3}).
The tube-volume optimal design is obtained as an orbit of the transformation passing through the design in (\ref{M3}).
In the following,
let
\begin{equation}
\label{uniform3}
 \left\{\begin{matrix} t^0_i \\ \frac{1}{3} \end{matrix}\right\}_{i=1,2,3}
 =\left\{\begin{matrix} -\frac{1}{3} & 0 & \frac{1}{3} \\
   \frac{1}{3} & \frac{1}{3} & \frac{1}{3} \end{matrix}\right\},
\end{equation}
a three-point uniform design in the Fourier regression.

\begin{thm}
\label{thm:minimum-design-polynomial}
In the weighted polynomial regression with variance function $\sigma_P^2(x;a_0,b_0,c_0,\allowbreak d_0)$, the three-point tube-volume optimal design is
$\left\{\begin{matrix} x_i \\ p_i\end{matrix}\right\}_{i=1,2,3}$,
where
\begin{equation}
\label{xipi}
 x_i = \varphi^{-1}(\tan(\pi t^0_i);a,b,c,d), \quad
 p_i = k\frac{\sigma_P^2(x_i;a_0,b_0,c_0,d_0)}{\sigma_P^2(x_i;a,b,c,d)}.
\end{equation}
Here, $t^0_i$ is defined in (\ref{uniform3}),
$a,b,c,d$ are arbitrarily given so that $a d-b c\ne 0$, and
$k>0$ is a constant so that $\sum p_i=1$.
The tube-volume optimal design includes the D-optimal designs as special cases where
\[
 \varphi(\cdot;a,b,c,d) = \varphi(\cdot;s,-t,t,s)\circ \varphi(\cdot;a_0,b_0,c_0,d_0)
\]
holds for some $s^2+t^2=1$.
\end{thm}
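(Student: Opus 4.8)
The plan is to reduce everything to the orbital characterization of Theorem \ref{thm:minimizer}: a matrix $M\in\mathcal{M}_P$ is tube-volume optimal precisely when $M\sim M_{1/3}$. I would prove the two implications separately, namely that each design (\ref{xipi}) is optimal and that, conversely, every optimal three-point design has this form, and then read off the $D$-optimal designs as a sub-family.

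For the ``if'' part I would simply compute the information matrix of (\ref{xipi}). Substituting the weights $p_i$ into $M=\sum_i f_P(x_i)f_P(x_i)^\top p_i/\sigma_P^2(x_i;a_0,b_0,c_0,d_0)$ cancels the dependence on $(a_0,b_0,c_0,d_0)$ and leaves the information matrix of the design $\{x_i,k\}$ taken with the variance $\sigma_P^2(\,\cdot\,;a,b,c,d)$. Since $x_i=\varphi^{-1}(\tan(\pi t^0_i);a,b,c,d)$, Theorem \ref{thm:invariance-design} applied with the transform $\varphi(\,\cdot\,;a,b,c,d)$ carries this design to the uniform design $\{\tan(\pi t^0_i),k\}$ with variance $\sigma_P^2(\,\cdot\,;1,0,0,1)$, whose information matrix is a positive multiple of $M_{1/3}$ by Lemma \ref{lem:M} and (\ref{M3}). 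Hence $M=k'A M_{1/3}A^\top\in\mathcal{A}(M_{1/3})$ and Theorem \ref{thm:minimizer} gives optimality.

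For the ``only if'' part I would argue at the level of supports. By Theorem \ref{thm:minimizer} an optimal three-point design has information matrix $M\sim M_{1/3}$, so there is $A=A(a,b,c,d)$ with $AMA^\top=M_{1/3}$. The crucial remark is that the set of support points of a three-point representation of a Hankel matrix is intrinsic to the matrix and does not involve the variance (which only turns representing coefficients into weights through multiplication by $\sigma^2(x_i)>0$), and, as in the proof of Theorem \ref{thm:action}, the action $M\mapsto AMA^\top$ moves supports by $x\mapsto\varphi(x;a,b,c,d)$. Thus the support of $M$ is the $\varphi^{-1}(\,\cdot\,;a,b,c,d)$-image of the support of $M_{1/3}$. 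Transporting $M_{1/3}$ to the Fourier side through the matrix $B$ of Lemma \ref{lem:B}, where $BM_{1/3}B^\top=\tfrac83 I_3$ by (\ref{BB}), the supports of $M_{1/3}$ correspond under $x=\tan(\pi t)$ to the three-point Fourier supports of a multiple of $I_3$; by Guest's uniqueness (Proposition \ref{prop:guest}) these are exactly the rotated equispaced sets $\{t^0_i-\theta\}$. Unwinding the two changes of variable yields $x_i=\varphi^{-1}(\tan(\pi t^0_i);a,b,c,d)$ (the rotation $\theta$ being absorbed into $(a,b,c,d)$), and the weights are then forced to equal (\ref{xipi}).

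Finally, to identify the $D$-optimal designs I would impose $\varphi(\,\cdot\,;a,b,c,d)=\varphi(\,\cdot\,;s,-t,t,s)\circ\varphi(\,\cdot\,;a_0,b_0,c_0,d_0)$. By the stabilizer lemma for the variance function in Section \ref{subsec:mobius}, this choice makes $\sigma_P^2(\,\cdot\,;a,b,c,d)$ and $\sigma_P^2(\,\cdot\,;a_0,b_0,c_0,d_0)$ coincide, so the ratio in (\ref{xipi}) is constant and $p_i\equiv\tfrac13$, and comparison with Theorem \ref{thm:d-optimal} shows the resulting points are exactly the $D$-optimal ones. I expect the main obstacle to be the ``only if'' part: exhibiting one optimal design for each orbit element is routine, but proving that (\ref{xipi}) exhausts the optimal designs requires characterizing all three-point supports of an optimal information matrix. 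The delicate point is that the information matrix and the variance cannot be normalized simultaneously by one M\"obius transform, which is why the argument must be run at the variance-free level of supports before Guest's theorem can be invoked.
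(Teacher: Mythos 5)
Your proposal is correct, and in one respect it is more complete than the paper's own proof. For the sufficiency direction you and the paper do essentially the same thing: the paper transports the design $\bigl\{\tan(\pi t^0_i),\tfrac13\bigr\}$ with variance $\sigma_P^2(\cdot\,;1,0,0,1)$ along the M\"obius action via Theorem \ref{thm:invariance-design} and then converts to the target variance by Lemma \ref{lem:proportional}; your direct cancellation of $\sigma_P^2(\cdot\,;a_0,b_0,c_0,d_0)$ in the weights is exactly what that lemma encodes, and both arguments then rest on (\ref{M3}) and Theorem \ref{thm:minimizer}. Where you genuinely differ is the necessity direction, which the paper's short proof does not address at all: it exhibits the orbit of designs and verifies their optimality, but never shows that every three-point design whose information matrix lies in $\mathcal{A}(M_{1/3})$ has the form (\ref{xipi}). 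Your plan for this---transport three-point representations equivariantly along the action (as in the proof of Theorem \ref{thm:action}), move $M_{1/3}$ to $\tfrac83 I_3$ on the Fourier side via $B$, and classify the three-point representations of a multiple of $I_3$ by Guest's uniqueness---is sound and buys an actual characterization rather than a construction. Three points need tightening, though. First, your ``crucial remark'' is false as literally stated: a $3\times 3$ positive definite Hankel matrix does \emph{not} have a unique three-point support; by the paper's canonical parameterization of $\mathcal{M}_P$, every $x_0\in\overline{\R}$ lies in the support of exactly one three-point representation, so there is a one-parameter family of supports. What is intrinsic is this whole family; your subsequent steps in fact use the correct statement (you classify \emph{all} supports of $M_{1/3}$ as the rotated sets $\{t^0_i-\theta\}$ and absorb $\theta$ into $(a,b,c,d)$), but the remark should be phrased that way. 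Second, Proposition \ref{prop:guest} is a statement about D-optimality, so to invoke it you must first argue that a normalized three-point Fourier design whose information matrix is proportional to $I_3$ actually equals $I_3$ and is hence D-optimal; this follows from the observation that $\tr(M)=\sum_i p_i\Vert f_F(t_i)\Vert^2=3$ for every normalized design when $n=3$. Third, ``the weights are then forced'' deserves one line: with the support fixed, $V=\bigl(f_P(x_1),f_P(x_2),f_P(x_3)\bigr)$ is nonsingular (Vandermonde), so $\diag(w_1,w_2,w_3)=V^{-1}M(V^\top)^{-1}$ is determined by $M$ and the support, and unwinding the transport yields precisely (\ref{xipi}). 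Your identification of the D-optimal subfamily (the stabilizer lemma forces $\sigma_P^2(\cdot\,;a,b,c,d)=\sigma_P^2(\cdot\,;a_0,b_0,c_0,d_0)$, hence $p_i\equiv\tfrac13$, then Theorem \ref{thm:d-optimal}) agrees with the paper's bare citation of Theorem \ref{thm:d-optimal}, with the weight verification made explicit.
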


\begin{proof}
The optimal design is the orbit of the M\"obius group passing through the design
$\left\{\begin{matrix} y_i \\ \frac{1}{3} \end{matrix}\right\}_{i=1,2,3}$, $y_i=\tan(\pi t^0_i)$, with variance function $\sigma_P^2(y;1,0,0,1)$.
By Theorem \ref{thm:invariance-design} with $(a',b',c',d')=(1,0,0,1)$,
we see that $\left\{\begin{matrix} x_i \\ \frac{1}{3} \end{matrix}\right\}_{i=1,2,3}$ is an optimal design under variance function $\sigma_P^2(x;a,b,c,d)$.
By Lemma \ref{lem:proportional} with $\sigma_1^2(x)=\sigma_P(x;a,b,c,d)$,
$\sigma_2^2(x)=\sigma_P(x;a_0,b_0,c_0,d_0)$, we have $p_i$ in (\ref{xipi}).
The results for D-optimality is proved in Theorem \ref{thm:d-optimal}.
\end{proof}

\begin{thm}
In the Fourier regression, the three-point tube-volume optimal design is given as
\begin{equation}
\label{tipi}
 \left\{\begin{matrix} t_i \\ p_i\end{matrix}\right\}_{i=1,2,3} =
 \left\{\begin{matrix} \frac{1}{\pi}\tan^{-1}(q \tan(\pi(t^0_i-\theta)) + r) \\
 k \Bigl(\frac{1+(q\tan(\pi(t^0_i-\theta))+r)^2}{1+\tan^2(\pi(t^0_i-\theta))}\Bigr)^2
 \end{matrix}\right\}_{i=1,2,3},
\end{equation}
where $k$ is a normalizing constant so that $\sum_i p_i=1$, $q\ne 0$, and $r$, $\theta$ are arbitrarily given.
In particular, the uniform design (D-optimal design)
$\left\{\begin{matrix} t^0_i-\theta \\ \frac{1}{3} \end{matrix}\right\}_{i=1,2,3}$
is a tube-volume optimal design.
\end{thm}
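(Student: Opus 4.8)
The plan is to derive the Fourier statement by translating the weighted-polynomial result of Theorem~\ref{thm:minimum-design-polynomial} across the tangent correspondence, and then reparameterizing. First I would apply Theorem~\ref{thm:fourier-polynomial}: a design $\left\{\begin{matrix} t_i \\ p_i\end{matrix}\right\}_{i=1,2,3}$ is tube-volume optimal for the Fourier regression (where $\sigma_F^2\equiv1$) if and only if $\left\{\begin{matrix} x_i \\ p_i\end{matrix}\right\}_{i=1,2,3}$ with $x_i=\tan(\pi t_i)$ is tube-volume optimal for the weighted polynomial regression with the canonical variance $\sigma_P^2(x)=\sigma_P^2(x;1,0,0,1)$. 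Specializing Theorem~\ref{thm:minimum-design-polynomial} to $(a_0,b_0,c_0,d_0)=(1,0,0,1)$ then describes every optimal Fourier design: the support points satisfy $\tan(\pi t_i)=x_i=\varphi^{-1}(\tan(\pi t^0_i);a,b,c,d)$ and the weights are $p_i=k\,\sigma_P^2(x_i;1,0,0,1)/\sigma_P^2(x_i;a,b,c,d)$, with $(a,b,c,d)$ arbitrary subject to $ad-bc\ne0$.

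Next I would reparameterize the free M\"obius transform by Proposition~\ref{prop:decomposition}, writing $\varphi^{-1}(\cdot;a,b,c,d)=\varphi(\cdot;q,r,0,1)\circ\varphi(\cdot;s,-t,t,s)$ with $s=\cos(\pi\theta)$, $t=\sin(\pi\theta)$, $q\ne0$. Exactly as in the proof of Theorem~\ref{thm:d-optimal}, the orthogonal factor sends $\tan(\pi t^0_i)$ to $\tan(\pi(t^0_i-\theta))$ via the tangent subtraction formula, and the affine factor then yields $x_i=q\tan(\pi(t^0_i-\theta))+r$; applying $\frac{1}{\pi}\tan^{-1}(\cdot)$ gives the points $t_i$ in (\ref{tipi}). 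Writing $T_i=\tan(\pi t^0_i)$ and $u_i=\tan(\pi(t^0_i-\theta))$, the support point is $x_i=qu_i+r$.

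The substantive step is the weight. Since $\varphi(x_i;a,b,c,d)=T_i$, the transformation law (\ref{sigmaPa0}) gives $\sigma_P^2(x_i;a,b,c,d)=\sigma_P^2(T_i)\,\lambda(x_i;a,b,c,d)^{-2}=(1+T_i^2)^{n-1}(cx_i+d)^{2(n-1)}$, so the weight ratio equals $(1+x_i^2)^{n-1}/[(1+T_i^2)(cx_i+d)^2]^{n-1}$. A short computation of the group inverse of the composition above gives $cx_i+d=q(s-t\,u_i)$. The crux is the identity $(s-t\,u_i)^2(1+T_i^2)=1+u_i^2$, which I would obtain from $s-t\,u_i=(tT_i+s)^{-1}$ together with $1+u_i^2=(1+T_i^2)/(tT_i+s)^2$, both immediate from $u_i=(sT_i-t)/(tT_i+s)$ and $s^2+t^2=1$. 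Hence $[(1+T_i^2)(cx_i+d)^2]^{n-1}=q^{2(n-1)}(1+u_i^2)^{n-1}$, and absorbing the constant $q^{-2(n-1)}$ into $k$ leaves $p_i=k\bigl((1+x_i^2)/(1+u_i^2)\bigr)^{n-1}$, which for $n=3$ is precisely the weight in (\ref{tipi}).

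Finally, the D-optimal assertion is read off by taking $q=1$, $r=0$: then $x_i=u_i$, so $t_i=t^0_i-\theta$ and each weight ratio equals one, recovering the uniform design $\left\{\begin{matrix} t^0_i-\theta \\ \frac{1}{3}\end{matrix}\right\}_{i=1,2,3}$, which is D-optimal by Proposition~\ref{prop:guest}. I expect the only non-routine step to be this weight reduction---tracking the factor $cx_i+d$ through the group inverse and collapsing it via the trigonometric identity---while everything else is assembling results already in hand.
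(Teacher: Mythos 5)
Your proof is correct, and its skeleton is the paper's: reduce to the weighted polynomial model via Theorem \ref{thm:fourier-polynomial}, specialize Theorem \ref{thm:minimum-design-polynomial} at $(a_0,b_0,c_0,d_0)=(1,0,0,1)$, decompose the free M\"obius element as $\varphi^{-1}(\cdot;a,b,c,d)=\varphi(\cdot;q,r,0,1)\circ\varphi(\cdot;s,-t,t,s)$ with $s=\cos(\pi\theta)$, $t=\sin(\pi\theta)$, and read off $x_i=q\tan(\pi(t^0_i-\theta))+r$. The one place you genuinely diverge is the weight computation. The paper writes $\varphi(\cdot;a,b,c,d)$ as an orthogonal element composed with $\varphi\bigl(\cdot;\frac{1}{q},-\frac{r}{q},0,1\bigr)$ and invokes the stabilizer lemma (an orthogonal left factor leaves $\sigma_P^2(\cdot;\,\cdot\,)$ unchanged), so the closed formula (\ref{sigmaPa}) immediately yields $\sigma_P^2(x_i;a,b,c,d)=\bigl(1+\tfrac{1}{q^2}(x_i-r)^2\bigr)^2$ with no trigonometry at all. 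You instead return to the defining relation (\ref{sigmaPa0}), obtaining $\sigma_P^2(x_i;a,b,c,d)=(1+T_i^2)^{n-1}(cx_i+d)^{2(n-1)}$, then track $cx_i+d$ through the explicit group inverse (your $(a,b,c,d)=(s,\,tq-rs,\,-t,\,sq+rt)$ is right, giving $cx_i+d=q(s-tu_i)$) and collapse it via the identity $(s-tu_i)^2(1+T_i^2)=1+u_i^2$, which indeed follows from $u_i=(sT_i-t)/(tT_i+s)$ and $s^2+t^2=1$. Both routes are valid; the paper's is shorter because it reuses the invariance machinery already established (the stabilizer lemma and (\ref{sigma-mobius})), while yours is more self-contained at this step and keeps the exponent $n-1$ visible for general $n$ before specializing to $n=3$. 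Your identification of the D-optimal case ($q=1$, $r=0$, whence $x_i=u_i$ and uniform weights, D-optimality by Proposition \ref{prop:guest}) coincides with the paper's closing remark that the uniform design corresponds to $(q,r)=(1,0)$.
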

\begin{proof}
Let $x_i$ and $p_i$ be given in (\ref{xipi}) when $(a_0,b_0,c_0,d_0)=(1,0,0,1)$.
Then, $\sigma_P^2(x;a_0,b_0,c_0,\allowbreak d_0)=\sigma_P^2(x)$, and from Theorem \ref{thm:fourier-polynomial}, the optimal design is given by $\left\{\begin{matrix} t_i \\ p_i \end{matrix}\right\}$, $t_i=\frac{1}{\pi}\tan^{-1}(x_i)$.
Let $(a,b,c,d)$ be chosen such that
$\varphi^{-1}(\cdot;a,b,c,d) = \varphi(\cdot;q,r,0,1)\circ\varphi(\cdot;s,-t,t,s)$ with $s=\cos(\pi\theta)$, $t=\sin(\pi\theta)$.
Then,
$\varphi(\tan(\pi t^0_i);s,-t,t,s)=\tan(\pi(t^0_i-\theta))$ and
$x_i=q\tan(\pi(t^0_i-\theta))+r$, and we have $t_i=\frac{1}{\pi}\tan^{-1}(x_i)$ in (\ref{tipi}).

On the other hand, since $\varphi(\cdot;a,b,c,d)=\varphi(\cdot;s,-t,t,s)\circ \varphi\bigl(\cdot;\frac{1}{q},-\frac{r}{q},0,1\bigr)$,
$\sigma_P^2(x_i;a,b,c,d)\allowbreak =\sigma_P^2\bigl(x_i;\frac{1}{q},-\frac{q}{r},0,1\bigr)=\bigl(1+\frac{1}{q^2}(x_i-r)^2\bigr)^2$.
Therefore,
\[
 p_i = k\frac{\sigma_P^2(x_i;1,0,0,1)}{\sigma_P^2(x_i;a,b,c,d)}
 = k\frac{(1+x_i^2)^2}{\bigl(1+\frac{1}{q^2}(x_i-r)^2\bigr)^2},
\]
which is equivalent to $p_i$ in (\ref{tipi}).
The uniform design corresponds to the case $(q,r)=(1,0)$.
\end{proof}

\subsection{Numerical comparisons}
\label{subsec:numerical}

Here we conduct a small numerical experiment to see the difference of the width of the simultaneous confidence band under optimal and non-optimal designs.
The model we use involves polynomial regression $f(x)=(1,x,x^2)^\top$, $x\in\mathcal{X}=(-\infty,\infty)$, with the variance function of $\sigma^2(x)=(1+x^2)^2$.
The three point designs,
\begin{align*}
& \mathcal{D}(\nu) =\left\{\begin{matrix} -x & 0 & x \\ \frac{p}{2} & 1-p & \frac{p}{2} \end{matrix}\right\} \quad\mbox{with}\ \ x=\frac{1}{\sqrt{v}},\ \ p=\frac{1+v}{2},
\end{align*}
and $v=1/12,1/9,1/6,1/4,1/3,1/2$, are compared.
The information matrix of $\mathcal{D}(\nu)$ is
\[
 M(v) = \frac{1}{2(1+v)}\begin{pmatrix}
 1 & 0 & v \\
 0 & v & 0 \\
 v & 0 & 1
\end{pmatrix},
\]
and the length $\Vol_1(\gamma_{M(v)^{-1}})$ takes its minimum at $v=1/3$, as proved in Theorem \ref{thm:minimizer}.
Table \ref{tab:comparisons} shows the empirical upper $\alpha$ quantiles $w_\alpha$ of the standardized simultaneous confidence bands for the designs $\mathcal{D}(v)$ and their corresponding theoretical values.
We generated the random variable
\[
 \max_{x\in\mathcal{X}}\frac{|(\widehat b-b)^\top f(x)|}{\sqrt{f(x)^\top M(v)^{-1} f(x)}}, \quad \widehat b-b\sim N_3\bigl(0,M(v)^{-1}\bigr),
\]
through simulation with 300,000 replications to obtain the empirical $\alpha$-quantiles
$w_\alpha$.
The corresponding theoretical values by part (i) of Proposition \ref{prop:naiman} are given in parentheses.
As the theorems state, the case $v=1/3$ has the narrowest simultaneous confidence band, although the improvement in the width is not substantial.
\begin{table}[H]
\label{tab:comparisons}
\begin{center}
\caption{Upper $\alpha$ quantiles $w_\alpha$ for the designs $\mathcal{D}(v)$.}
\bigskip
\begin{tabular}{l|c|cc}
\hline
\ $v$ & $\Vol_1(\gamma_{\Sigma})$ & $w_{0.1}$ & $w_{0.05}$ \\
\hline
$1/12$ & 10.872 & 2.3473 \ (2.3879) & 2.6328 \ (2.6624) \\
 $1/9$ & 10.697 & 2.3463 \ (2.3810) & 2.6319 \ (2.6562) \\
 $1/6$ & 10.469 & 2.3438 \ (2.3720) & 2.6283 \ (2.6481) \\
 $1/4$ & 10.304 & 2.3412 \ (2.3653) & 2.6248 \ (2.6421) \\
 $1/3$ & 10.260 & 2.3398 \ (2.3635) & 2.6234 \ (2.6405) \\
 $1/2$ & 10.383 & 2.3411 \ (2.3685) & 2.6251 \ (2.6450) \\
\hline
\end{tabular}

\medskip
(Tube-volume optimal at $v=1/3$. Theoretical values are in parentheses.)
\end{center}
\end{table}

\section{Summary and remaining problems}
\label{sec:summary}

In this paper, we have proposed the tube-volume (TV) criterion $\Vol_1(\gamma_\Sigma)$ in (\ref{len}) in experimental design.
If a design is tube-volume optimal and simultaneously, D-optimal minimizing $\max_{x\in\mathcal{X}} f(x)^\top \Sigma^{-1}f(x)$, the design is optimal that attains the minimum band-width of simultaneous confidence bands.

Then, the proposed criterion was applied to Fourier regression model that is a standard model in linear optimal design theory,
and weighted polynomial regression model that is mathematically equivalent to the Fourier regression model.
The M\"obius group keeps the tube-volume criterion invariant, whereas
the subgroup $\mathrm{O}(2,\R)$ of the M\"obius group keeps the D-criterion invariant.

Using the M\"obius invariance, when $n=3$,
we found that the tube-volume optimal designs in the Fourier regression and the weighted polynomial regression form an orbit of the M\"obius group.
The tube-volume optimal designs contain D-optimal designs as special cases.
This means that in the Fourier regression, the uniform design is a universal optimal design minimizing both tube-volume criterion and D-criterion.

\subsubsection*{A conjecture}

We conjecture that for all $n$, the tube-volume optimal design is characterized as an orbit of the M\"obius group containing D-optimal designs.
One supporting observation is that for small $n$ ($n\le 6$), tube-volume local optimality at the D-optimal designs can be proved by direct calculations.
That is, the Hessian matrix of the tube-volume criterion evaluated at the D-optimal design is positive semi-definite, and
the null space of the Hessian matrix corresponds to the tangent space of the orbit of M\"obius group action.
However, the proof for general $n$ remains outstanding.
As stated in Remark \ref{rem:circle}, when $n=3$, the trajectory of the normalized regression basis of the minimum length is a circle.
Although the trajectory cannot be a circle for $n\ge 4$, some idea of the isoperimetric inequality may be useful.

\subsubsection*{Multivariate extension}

Throughout the paper, we just dealt with the case where the explanatory variable is one-dimensional.
However, the volume-of-tube method works for the construction of the simultaneous confidence bands for regression with multidimensional explanatory variables except for the conservativeness (ii) of Proposition \ref{prop:naiman},
and the volume-optimality is well-defined.
For example, we can discuss the volume-optimality of the $p$-variate polynomial regression model with the basis vector
\begin{align*}
f(x) &= \bigl(1,(x_i)_{1\le i\le p},(x_i x_j)_{1\le i\le j\le p},\ldots,
(x_{i_1} \cdots x_{i_d})_{1\le i_1\le\cdots\le i_d\le p}\bigr)^\top \\
 &\in \R[x_1,\ldots,x_p]^{p+d\choose d}.
\end{align*}
By the same argument as the univariate case, we can prove that
the multivariate M\"obius transform $\varphi:\overline\R^p\to\overline\R^p$
defined by
\[
 x=(x_1,\ldots,x_p)^\top\mapsto \varphi(x;A,b,c,d)=\frac{A x+b}{c^\top x+d}, \quad \det\begin{pmatrix} A & b \\ c^\top & d \end{pmatrix}\ne 0
\]
where $A\in\R^{p\times p}$, $b,\,c\in \R^{p\times 1}$, $d\in\R$
(e.g., \cite{Kato-McCullagh14})
remains the invariance (volume preserving property)
$\Vol_p(\gamma_{M^{-1}})=\Vol_p(\gamma_{(A M A^\top)^{-1}})$
of Theorem \ref{thm:invariance}.
However, the treatment of the multidimensional case (see, e.g., \cite{Lasserre09} for the moment cone) remains a future topic of research.

\subsubsection*{Application to other regression models}

The application of the TV-criterion to other regression models remains an important research topic.
As a simple example, consider the Fourier and the weighted polynomial regressions (\ref{fourier}) and (\ref{polynomial}) whose explanatory domain is a finite interval $\mathcal{X}=[A,B]$.
Then, using arguments similar to those used in Section \ref{sec:n=3}, we can show that the TV-optimal design is an improper two-point design with masses at $A$ and $B$.
This does not coincide with the D-optimal design.

From this observation, we pose two problems:
 (i) To characterize the models in which the proper TV-optimal design exists, and the TV-optimal and D-optimal designs are compatible.
 (ii) How to combine the TV-optimal and D-optimal designs when they are different, for example, a mixture of the D- and TV-optimal designs with appropriate weights.

\appendix
\section{Appendix: Proofs}

\subsection{Proof of Proposition \ref{prop:elliptically}}
\label{subsec:elliptically}

\begin{proof}
Let $\eta=\xi/\Vert\xi\Vert$, where $\xi=\Sigma^{-\frac{1}{2}}(\widehat b-b)$.
$\eta$ is uniformly distributed on the unit sphere $\S^{n-1}$.
Let $\psi(x)=\Sigma^{\frac{1}{2}}f(x)/\Vert\Sigma^{\frac{1}{2}}f(x)\Vert$.
Then, under the assumption that any connected component of $\gamma_\Sigma$ is not a closed curve, 
the upper tail probability of $\max_{x\in\mathcal{X}}|\eta^\top\psi(x)|$ is bounded above by the expectation of the Euler-characteristic $\chi(A_c)$ of the excursion set:
\[
 A_c= \bigl\{ \psi(x) \mid x\in\mathcal{X},\,\eta^\top\psi(x) \ge c \bigr\}
 \cup \bigl\{ -\psi(x) \mid x\in\mathcal{X},\,-\eta^\top\psi(x) \ge c \bigr\}
 \subset\S^{n-1}.
\]
That is, by Proposition 3.2 and (3.10) of \cite{Takemura-Kuriki02},
\begin{align*}
 \Pr\biggl(\max_{x\in\mathcal{X}}|\eta^\top\psi(x)| >c\biggr) \le
 \frac{\Vol_1(\gamma_\Sigma)}{2\pi} \Pr\Bigl(B_{\frac{2}{2},\frac{n-2}{2}}>c^2\Bigr)
 + \chi(\gamma_\Sigma)\Pr\Bigl(B_{\frac{1}{2},\frac{n-1}{2}}>c^2\Bigr), 
\end{align*}
where $B_{\frac{k}{2},\frac{n-k}{2}}$ is a random variable distributed as the beta distribution with parameter $\bigl(\frac{k}{2},\frac{n-k}{2}\bigr)$.

Let $\widetilde\xi$ be a copy of $\xi$ distributed independently of $\xi$ and $B_{\frac{k}{2},\frac{n-k}{2}}$.
Then, we see the equivalence in distribution:
\begin{equation}
 \Vert\widetilde\xi\Vert \times \eta \mathop{=}^d \xi \quad\mbox{and}\quad \Vert\widetilde\xi\Vert^2 \times B_{\frac{k}{2},\frac{n-k}{2}} \mathop{=}^d R_k^2.
\label{equivalence}
\end{equation}
(\ref{equivalence}) can be proved by checking the Mellin transforms.
By letting $c:=c/\Vert\widetilde\xi\Vert$, and taking the expectations with respect to $\widetilde\xi$, we have (\ref{naiman_elliptically}). 
\end{proof}

\subsection{Proof of Lemma \ref{lem:M}}
\label{subsec:gould}

\begin{proof}
Let $t_k=k/n-(n+1)/(2n)$ and $x_k=\tan(\pi t_k)$.
The $(i,j)$ element of $M$ is
\[
 m_{i,j}
= \frac{1}{n}\sum_{k=1}^{n} x_k^{(i-1)+(j-1)}\frac{1}{(1+x_k^2)^{2(n-1)}}
= \frac{1}{n}\sum_{k=1}^{n} \sin^{i+j-2}(\pi t_k) \cos^{n-i-j}(\pi t_k).
\]
Then, apply Lemma \ref{lem:m} below.
\end{proof}

\begin{lem}
\label{lem:m}
Let $t_i= i/n -d$, $i=1,\ldots,n$, where $d$ is a constant.
Then,
\[
\frac{1}{n}\sum_{i=1}^n \sin^{2k}(\pi t_i)\cos^{2n-2k-2}(\pi t_i)
= \frac{\Gamma(k+\frac{1}{2})\Gamma(n-k-\frac{1}{2})}{\pi\Gamma(n)}.
\]
\end{lem}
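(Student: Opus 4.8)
The plan is to recognize that the summand is a trigonometric polynomial of strictly controlled degree, so that the equally spaced average collapses onto a single Fourier mode, which then reduces to a Beta integral. First I would rewrite the summand using the power-reduction identities $\sin^2(\pi t)=\tfrac12(1-\cos 2\pi t)$ and $\cos^2(\pi t)=\tfrac12(1+\cos 2\pi t)$. Since both exponents $2k$ and $2n-2k-2$ are even, the product $\sin^{2k}(\pi t)\cos^{2n-2k-2}(\pi t)$ becomes a polynomial of degree $k+(n-k-1)=n-1$ in $\cos(2\pi t)$, hence a cosine polynomial $\sum_{m=0}^{n-1} a_m\cos(2\pi m t)$ with frequencies $m=0,1,\ldots,n-1$ only. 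In particular it has period $1$, and its constant term is $a_0=\int_0^1 \sin^{2k}(\pi t)\cos^{2n-2k-2}(\pi t)\,\dd t$.

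Next I would apply discrete orthogonality at the equally spaced nodes $t_i=i/n-d$. For each integer $m$ one has $\tfrac1n\sum_{i=1}^n e^{2\pi\sqrt{-1}\,m t_i}=e^{-2\pi\sqrt{-1}\,m d}\cdot\tfrac1n\sum_{i=1}^n e^{2\pi\sqrt{-1}\,m i/n}$, and the geometric sum of $n$-th roots of unity vanishes unless $m\equiv 0\pmod n$. Because every frequency appearing in the cosine polynomial satisfies $0\le m\le n-1<n$, the only surviving mode is $m=0$; there the shift factor is $e^{0}=1$, so the dependence on the constant $d$ drops out entirely. Hence $\tfrac1n\sum_{i=1}^n \sin^{2k}(\pi t_i)\cos^{2n-2k-2}(\pi t_i)=a_0=\int_0^1 \sin^{2k}(\pi t)\cos^{2n-2k-2}(\pi t)\,\dd t$.

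Finally I would evaluate this integral as a Beta function. Substituting $u=\pi t$ and using the evenness of the integrand about $u=\pi/2$, the integral equals $\tfrac{2}{\pi}\int_0^{\pi/2}\sin^{2k}u\,\cos^{2n-2k-2}u\,\dd u=\tfrac{1}{\pi}B\bigl(k+\tfrac12,\,n-k-\tfrac12\bigr)$, which is exactly $\Gamma\bigl(k+\tfrac12\bigr)\Gamma\bigl(n-k-\tfrac12\bigr)/(\pi\Gamma(n))$ because the two Beta parameters sum to $n$, matching the claimed value.

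The one point that genuinely must be checked is the strict degree bound: the highest frequency is $n-1$, strictly below $n$, so no aliasing occurs and only the $m=0$ mode is picked out by the $n$-point sum. This is the heart of why the finite average reproduces the period integral \emph{exactly} rather than approximately, and it is also the reason the result is independent of the offset $d$. Everything else is routine bookkeeping with the power-reduction formulas and a standard Beta-integral evaluation.
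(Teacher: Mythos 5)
Your proof is correct, but it takes a genuinely different route from the paper's. Both arguments rest on the same core mechanism---roots-of-unity cancellation at the $n$ equally spaced nodes---but they exploit it differently. The paper writes $\sin(\pi t_j)$ and $\cos(\pi t_j)$ as complex exponentials, expands the powers by the binomial theorem, observes that after averaging over $j$ only the terms with $s+t=n-1$ (the frequency-zero terms) survive, and then must evaluate the surviving alternating sum $\sum_s(-1)^s\binom{2k}{s}\binom{2n-2k-2}{n-1-s}$ by citing a binomial identity of Gould, having first invoked the Gamma duplication formula to put the target in factorial form. You instead package the cancellation as a quadrature-exactness statement: the summand is a cosine polynomial of highest frequency $n-1<n$, so the $n$-point average equals the period integral $\int_0^1\sin^{2k}(\pi t)\cos^{2n-2k-2}(\pi t)\,\dd t$, which is a standard Beta integral, namely $\tfrac1\pi B\bigl(k+\tfrac12,n-k-\tfrac12\bigr)$ with parameters summing to $n$. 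This buys you two things: the offset $d$ visibly drops out for a conceptual reason (no aliasing, since aliasing could only occur at frequencies that are multiples of $n$), and you avoid both Gould's identity and the duplication formula---the Gamma-function form of the answer appears directly. The paper's route is more combinatorial and stays entirely within finite sums; yours is shorter and makes the exactness mechanism transparent. The degree bound you flag as the crux is verified correctly: the product is a polynomial of degree $k+(n-k-1)=n-1$ in $\cos(2\pi t)$, hence involves only frequencies $0,1,\ldots,n-1$, all strictly below $n$.
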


\begin{proof}
From the duplication formula for the gamma function, it suffices to show that
\[
 \frac{1}{n}\sum_{j=1}^n \sin^{2k}(\pi t_j)\cos^{2n-2k-2}(\pi t_j)
= \frac{(2k)!(2n-2k-2)!}{k!(n-k-1)!(n-1)!}2^{-2n+2}.
\]
Let $\w=e^{i\pi/n}$, $c=e^{-i\pi d}$.
The left-hand side times $(2i)^{2k} 2^{2n-2k-2}=2^{2n-2} (-1)^k$ is
\begin{align}
& \frac{1}{n} \sum_{j=1}^n (c \w^j-c^{-1} w^{-j})^{2k} (c \w^j+c^{-1} w^{-j})^{2n-2k-2} \nonumber \\
&= \frac{1}{n} \sum_{j=1}^n
 \sum_{0\le s\le 2k}{2k \choose s}(-1)^s c^s \w^{sj} c^{-(2k-s)} \w^{-(2k-s)j} \nonumber \\
& \qquad\times
 \sum_{0\le t\le 2n-2k-2}{2n-2k-2 \choose t} c^t \w^{tj} c^{-(2n-2k-2-t)}\w^{-(2n-2k-2-t)j}.
\label{eq1}
\end{align}
Here, the contribution of the summation with respect to $j$ is
\[
 \frac{1}{n} \sum_{j=1}^n \w^{(-2n+2+2s+2t)j} = \begin{cases} 1 & (s+t=n-1), \\ 0 & (\mbox{otherwise}). \end{cases}
\]
Hence, (\mbox{\ref{eq1}}) is equal to
\begin{align*}
\sum_{\max(0,2k-n+1)\le s\le \min(2k,n-1)} (-1)^s {2k \choose s} {2n-2k-2 \choose n-1-s} \\
= (-1)^k \frac{(2k)!(2n-2k-2)!}{k!(n-k-1)!(n-1)!}.
\end{align*}
The last equality follows from (1.41) of \cite{Gould10}.
\end{proof}

\subsection{Proof of Lemma \ref{lem:v}}
\label{subsec:v}

The proof of Lemma \ref{lem:v} is divided into three parts (lemmas).
\begin{lem}
\label{lem:v1}
For $M\in\mathcal{M}_P$, there exist $u,w>1$ such that
\[
 M \sim \begin{pmatrix} u & 1 & 1 \\ 1 & 1 & 1 \\ 1 & 1 & w \end{pmatrix}.
\]
\end{lem}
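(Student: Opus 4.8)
The plan is to realize $M$ as the Hankel matrix of a three-atom measure and then use the Möbius action to move its atoms to the canonical positions $0$, $1$, and $\pm\infty$, at which the desired normal form is transparent. First I would invoke the Karlin--Studden canonical representation quoted earlier, with the fixed point taken at $\pm\infty$: since $M\succ 0$ lies in the interior of $\mathcal{M}_P$, it is represented as
\[
 M = w_1 f_P(x_1) f_P(x_1)^\top + w_2 f_P(x_2) f_P(x_2)^\top + w_0 f_P(\pm\infty) f_P(\pm\infty)^\top,
\]
with $w_0,w_1,w_2>0$, $f_P(\pm\infty)=(0,0,1)^\top$, and $-\infty<x_1<x_2<\infty$. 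Thus $M$ is supported on three distinct points of $\overline{\R}$.

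Next I would apply the affine Möbius transformation that fixes $\pm\infty$ and sends $x_1\mapsto 0$, $x_2\mapsto 1$, namely $\varphi(x)=(x-x_1)/(x_2-x_1)$, with matrix $A=A(a,b,0,1)$, $a=1/(x_2-x_1)$, $b=-x_1/(x_2-x_1)$. By Theorem \ref{thm:action}, $AMA^\top\in\mathcal{M}_P$ and equals the Hankel matrix of the transformed measure. Because $c=0$ here, $\lambda(x_i;a,b,0,1)\equiv 1$, so by (\ref{vw}) the two finite weights are preserved; and from the lower-triangular form of $A(a,b,0,1)$ read off (\ref{A3}) one checks $A f_P(\pm\infty)=a^2 f_P(\pm\infty)$, so the mass at infinity becomes $a^4 w_0>0$. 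Hence $M\sim p' f_P(0)f_P(0)^\top + q' f_P(1)f_P(1)^\top + s' f_P(\pm\infty)f_P(\pm\infty)^\top$ with $p',q',s'>0$, and evaluating the three rank-one terms gives
\[
 M \sim \begin{pmatrix} p'+q' & q' & q' \\ q' & q' & q' \\ q' & q' & q'+s' \end{pmatrix}.
\]

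Finally I would rescale. The matrix $A(\kappa,0,0,\kappa)=\kappa^2 I_3$ lies in $\mathcal{A}$ and acts by $M\mapsto\kappa^4 M$, so choosing $\kappa^4=1/q'$ normalizes the central antidiagonal to $1$:
\[
 M \sim \begin{pmatrix} 1+p'/q' & 1 & 1 \\ 1 & 1 & 1 \\ 1 & 1 & 1+s'/q' \end{pmatrix}.
\]
Setting $u=1+p'/q'$ and $w=1+s'/q'$ yields $u,w>1$ since $p',q',s'>0$, which is precisely the asserted normal form.

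I expect the only delicate point to be the bookkeeping at the support point $\pm\infty$: one must confirm that the canonical representation genuinely carries positive mass there and that this mass transforms to a positive multiple of $f_P(\pm\infty)f_P(\pm\infty)^\top$, since Theorem \ref{thm:action} is phrased for finite atoms. This is settled by the explicit triangular shape of $A(a,b,0,1)$ above. Alternatively one could take the free support point of the canonical representation to be finite and use a general Möbius map, justified by the $3$-transitivity of $\mathrm{PGL}(2,\R)$ on $\overline{\R}$, to send the three atoms to $0,1,\pm\infty$, at the cost of nontrivial $\lambda$-factors in (\ref{vw}) that nonetheless keep all weights positive.
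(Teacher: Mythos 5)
Your proof is correct and takes essentially the same route as the paper's: both start from the Karlin--Studden three-atom representation of $M$ with one atom at $\pm\infty$, and both use an affine M\"obius transformation (i.e.\ $c=0$, fixing $\infty$) to move the two finite atoms to $0$ and $1$, so that the resulting Hankel matrix has the stated form with $u-1$, $1$, $w-1$ as the three transformed masses. The only cosmetic difference is that the paper absorbs your separate rescaling step into the choice $d=-w_2^{1/4}$ and writes a single explicit $A$ with $A\,T\,A^\top=M$ for the target matrix $T$, whereas you map $M$ to $T$ in two steps; these are trivially equivalent since $\sim$ is an equivalence relation.
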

\begin{lem}
\label{lem:v2}
For $u,w>1$, there exist $\tilde u,\tilde v,\tilde w>0$,
$\tilde v<\sqrt{\tilde u\tilde w}$ such that
\[
 \begin{pmatrix} u & 1 & 1 \\ 1 & 1 & 1 \\ 1 & 1 & w \end{pmatrix} \sim
 \begin{pmatrix} \tilde u & 0 & \tilde v \\ 0 & \tilde v & 0 \\ \tilde v & 0 & \tilde w \end{pmatrix}.
\]
\end{lem}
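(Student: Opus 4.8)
The plan is to exhibit an explicit element $A\in\mathcal{A}$ that conjugates the given matrix into the required anti-diagonal form, the point being that this form is characterised by the vanishing of the two ``odd'' Hankel entries. Writing $M=\begin{pmatrix} u & 1 & 1 \\ 1 & 1 & 1 \\ 1 & 1 & w\end{pmatrix}$, I first note (cf. Lemma \ref{lem:v1}) that $M$ is the moment matrix of the three-point measure $(u-1)\delta_0+\delta_1+(w-1)\delta_\infty$ with all weights positive, where $f_P(\infty)=(0,0,1)^\top$. Since the target matrix $\begin{pmatrix}\tilde u & 0 & \tilde v\\ 0 & \tilde v & 0 \\ \tilde v & 0 & \tilde w\end{pmatrix}$ is exactly the Hankel matrix whose odd moments $m_1,m_3$ vanish, the whole task is to find a M\"obius transform $\varphi(\cdot;a,b,c,d)$ whose associated $A=A(a,b,c,d)$ makes the $(1,2)$ and $(2,3)$ entries of $A M A^\top$ equal to zero. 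Positive definiteness of $A M A^\top$ is automatic from Theorem \ref{thm:action} (a congruence by an invertible matrix), and for the anti-diagonal form positive definiteness is equivalent to $\tilde u,\tilde v,\tilde w>0$ together with $\tilde v<\sqrt{\tilde u\tilde w}$; so the inequality in the statement needs no separate verification.

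To produce $A$ I would search in the chart $c=1$ (genuine M\"obius maps; purely affine maps $c=0$ keep the atom at $\infty$ fixed and leave only two finite atoms to balance both $m_1$ and $m_3$, which is over-determined). Computing the transformed moments directly — equivalently, reading off rows $2$ and $4$ of the matrix $F(a,b,c,d)$ from the proof of Theorem \ref{thm:orbit} applied to $m=(u,1,1,1,w)^\top$ — one finds that the condition $\tilde m_1=0$ is \emph{linear and homogeneous} in $(a,b)$,
\[
 \tilde m_1 = a\,P(d)+b\,Q(d)=0,\qquad P(d)=(1+d)^3+(w-1),\quad Q(d)=d^3(u-1)+(1+d)^3,
\]
so that up to scale $(a,b)=(-Q(d),P(d))$. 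The remaining condition $\tilde m_3=0$ is a homogeneous cubic in $(a,b)$; substituting the above line collapses it to a single real equation $C(d)=0$, where
\[
 C(d)=P(d)^3 d(u-1)+\bigl(P(d)-Q(d)\bigr)^3(1+d)-Q(d)^3(w-1),\qquad P-Q=(w-1)-d^3(u-1).
\]
Thus the four-parameter search reduces to finding one real root of the one-variable polynomial $C$. One checks along the way that $ad-bc=-\bigl(d^4(u-1)+(1+d)^4+(w-1)\bigr)<0$ for every real $d$, so every choice of $d$ yields a genuine invertible transform and there are no degenerate values to avoid.

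The crux — and the only place any real work is needed — is showing $C$ has a real root. Here I would evaluate
\[
 C(0)=w(w-1)(w-2),\qquad C(-1)=(u-1)(w-1)(u-w)(u+w-2),
\]
together with the leading behaviour $C(d)\sim (u-1)(2-u)u\,d^{10}$ as $d\to\pm\infty$. Exploiting the symmetry $u\leftrightarrow w$ (conjugation by the anti-diagonal $A(0,1,1,0)\in\mathcal{A}$ reverses the basis order and swaps $m_0\leftrightarrow m_4$), one may assume $u\le w$, whence $C(-1)\le 0$. If $u=w$ then $C(-1)=0$ and $d=-1$ already solves the problem. If $u<w$ then $C(-1)<0$, and a sign change — hence a root — follows either from $C(0)=w(w-1)(w-2)>0$ when $w>2$ (root in $(-1,0)$), or from $C(+\infty)=+\infty$ when $u<2$ (root in $(-1,\infty)$); since $u<w\le 2$ forces $u<2$, these two cases exhaust $u<w$. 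The resulting $A$ gives $A M A^\top$ of the desired anti-diagonal form, and its positive definiteness supplies $\tilde u,\tilde v,\tilde w>0$ and $\tilde v<\sqrt{\tilde u\tilde w}$, completing the proof. The main obstacle is therefore purely the bookkeeping of these sign evaluations; the conceptual reduction to a single-variable root count, via the linearity of the $m_1$-condition, is what makes it tractable.
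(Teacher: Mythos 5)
Your proof is correct, and while it shares the paper's overall strategy --- make the two odd Hankel entries of $AMA^\top$ vanish, solve one of the two equations first, and reduce to finding a real root of a one-variable polynomial by a sign change --- it executes this in a genuinely different and in some respects cleaner way. The paper works in the chart $d\neq 0$ with the variable $x=c/d$, solves $(AMA^\top)_{1,2}=0$ \emph{rationally} for $b$ (dividing by $f_1(x;u)=(x+1)^3+(u-1)$), and reduces to a sextic in $x$; the cost is degeneracy control, since it must check that the root obtained does not annihilate the denominator $f_1$ or the determinant $ad-bc$, which the paper handles with two resultant computations, the auxiliary positivity Lemma~\ref{lem:h} for $h(u,v)$, and a separate explicit treatment of the exceptional case $u=2$. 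You instead solve the linear condition \emph{homogeneously}, $(a,b)=(-Q(d),P(d))$ in the chart $c=1$, so no division ever occurs, and invertibility becomes the identity $ad-bc=-\bigl(d^4(u-1)+(1+d)^4+(w-1)\bigr)<0$, valid for every real $d$ because $u,w>1$: all degenerate parameter values disappear at a stroke, and no analogue of Lemma~\ref{lem:h} or of the $u=2$ case is needed. Your use of the reversal symmetry $u\leftrightarrow w$ (conjugation by $A(0,1,1,0)\in\mathcal{A}$) to assume $u\le w$ also makes the sign analysis shorter than the paper's argument at the three points $0$, $-1$, $\pm\infty$. I verified your computations: $\tilde m_1=aP(d)+bQ(d)$ with $P=(1+d)^3+(w-1)$ and $Q=d^3(u-1)+(1+d)^3$; $\tilde m_3=(1+d)(a+b)^3+b^3d(u-1)+a^3(w-1)$, which collapses to your $C(d)$ on the line $(a,b)=(-Q,P)$; $C(0)=w(w-1)(w-2)$; $C(-1)=(u-1)(w-1)(u-w)(u+w-2)$; the $d^{10}$-coefficient $(u-1)(2-u)u$; and your case split ($u=w$ gives the root $d=-1$; $u<w$ with $w>2$ gives a root in $(-1,0)$; $u<w\le 2$ forces $u<2$ and gives a root in $(-1,\infty)$) is exhaustive. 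What the paper's parameterization buys is only a lower-degree polynomial (degree $6$ versus your degree $10$); yours trades that for the complete elimination of the resultant machinery and the special case, which is arguably the better bargain.
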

\begin{lem}
\label{lem:v3}
For $\tilde u,\tilde v,\tilde w>0$,
$\tilde v<\sqrt{\tilde u \tilde w}$, there exist $v\in (0,1)$ such that
\[
 \begin{pmatrix} \tilde u & 0 & \tilde v \\ 0 & \tilde v & 0 \\ \tilde v & 0 & \tilde w \end{pmatrix} \sim M_v.
\]
\end{lem}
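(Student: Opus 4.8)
The plan is to exploit the fact that both $M_v$ in (\ref{Mv}) and the target matrix have all \emph{odd} moments equal to zero, so that they can be related using only the diagonal part of the group $\mathcal{A}$. Writing the target as the Hankel matrix with moments $m_0=\tilde u$, $m_2=\tilde v$, $m_4=\tilde w$ and $m_1=m_3=0$, I would look for a transformation $A\in\mathcal{A}$ that preserves this sparsity pattern while rescaling the three nonzero moments enough to reach $(1,v,1)$. This is exactly what the diagonal subgroup does, so no rotation or translation part is needed.

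The natural candidate is a scaled diagonal element $A=k\,A(q,0,0,1)\in\mathcal{A}$ with $k>0$ and $q\ne 0$, where $A(q,0,0,1)=\diag(1,q,q^2)$ by (\ref{A3}). A direct computation gives
\[
 A \begin{pmatrix} \tilde u & 0 & \tilde v \\ 0 & \tilde v & 0 \\ \tilde v & 0 & \tilde w \end{pmatrix} A^\top
 = k^2 \begin{pmatrix} \tilde u & 0 & q^2\tilde v \\ 0 & q^2\tilde v & 0 \\ q^2\tilde v & 0 & q^4\tilde w \end{pmatrix},
\]
so the three nonzero moments transform as $(\tilde u,\tilde v,\tilde w)\mapsto(k^2\tilde u,\,k^2 q^2\tilde v,\,k^2 q^4\tilde w)$ while the off-diagonal zeros are retained. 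The overall factor $k^2$ is available precisely because $\mathcal{A}$ contains positive scalar multiples, as recorded in the parameterization of Proposition \ref{prop:decomposition}.

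It then remains only to choose $k$ and $q$ so that the two corner entries equal $1$. Solving $k^2\tilde u=1$ and $k^2 q^4\tilde w=1$ forces $k=\tilde u^{-1/2}$ and $q=(\tilde u/\tilde w)^{1/4}$, both well-defined and nonzero because $\tilde u,\tilde w>0$. With these choices the middle entry becomes
\[
 v = k^2 q^2\tilde v = \frac{\tilde v}{\sqrt{\tilde u\tilde w}},
\]
so that $A M A^\top = M_v$ as required.

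The last point to verify — and the only place the hypotheses enter — is that this $v$ lies in the open interval $(0,1)$: positivity of $\tilde v$ gives $v>0$, and the assumption $\tilde v<\sqrt{\tilde u\tilde w}$ gives exactly $v<1$. I expect no genuine obstacle in this lemma: once one observes that the odd-moment-free form is stabilized by the diagonal torus and that overall scaling is realizable inside $\mathcal{A}$, the construction is forced and the arithmetic is routine. The substantive work has been front-loaded into Lemmas \ref{lem:v1} and \ref{lem:v2}, which produce this convenient normal form in the first place; here the conditions $\tilde u,\tilde w>0$ and $\tilde v<\sqrt{\tilde u\tilde w}$ are tailored precisely to make $q$ real and $v\in(0,1)$.
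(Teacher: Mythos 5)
Your proposal is correct and follows essentially the same route as the paper: the paper takes $A=A(a,0,0,d)$ with $a=\tilde w^{-1/4}$, $d=\tilde u^{-1/4}$, which is exactly your scaled diagonal element $k\,A(q,0,0,1)=A(\sqrt{k}\,q,0,0,\sqrt{k})$ in a different parameterization, and both computations yield $v=\tilde v/\sqrt{\tilde u\tilde w}\in(0,1)$. The only cosmetic difference is that the paper absorbs your scalar $k$ into the entries of $A(a,b,c,d)$ rather than invoking Proposition \ref{prop:decomposition}.
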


\begin{proof}[Proof of Lemma \ref{lem:v1}]
We start from a canonical form in (\ref{KS-MP}):
\[
 M = w_1 f_P(x_1) f_P(x_1)^\top + w_2 f_P(x_2) f_P(x_2)^\top + w_0 f_P(\pm\infty) f_P(\pm\infty)^\top,
\]
where $f_P(x)=(1,x,x^2)^\top$, $f_P(\pm\infty)=(0,0,1)^\top$.
For $A=A(a,b,c,d)$
with $a=w_2^{1/4}(x_1-x_2)$, $b=-w_2^{1/4}x_1$, $c=0$, $d=-w_2^{1/4}$,
we have
\[
A \begin{pmatrix} u & 1 & 1 \\ 1 & 1 & 1 \\ 1 & 1 & w \end{pmatrix} A^\top = M, \quad
u=w_1 d^{-4}+1,\ \ w=w_0 a^{-4}+1.
\]
\end{proof}

\begin{proof}[Proof of Lemma \ref{lem:v2}]
Let
\[
 M = \begin{pmatrix} u & 1 & 1 \\ 1 & 1 & 1 \\ 1 & 1 & v \end{pmatrix}, \quad
 \widetilde M = \begin{pmatrix} \tilde u & 0 & \tilde v \\ 0 & \tilde v & 0 \\ \tilde v & 0 & \tilde w \end{pmatrix}, \quad
 A = \begin{pmatrix} d^2 & 2cd & c^2 \\ bd & bc+ad & ac \\ b^2 & 2ab & a^2 \end{pmatrix}.
\]
We confirm that equation $A M A^\top=\widetilde M$ has a solution $(a,b,c,d)$ such that $ad-bc\ne 0$.
It is enough to show that under the assumption $a,d\ne 0$, a solution $(a,b,c,d)$ satisfies
\begin{equation}
\label{AMAAMA}
 (A M A^\top)_{1,2}=0, \ \ (A M A^\top)_{2,3}=0, \ \ ad-bc\ne 0.
\end{equation}

Solving $(A M A^\top)_{1,2}=0$ with respect to $b$ yields
\begin{equation}
\label{b}
 b= -a \frac{c^3 v+3 c^2 d+3 c d^2+d^3}{d^3 f_1(c/d;u)},
\end{equation}
where
\[
 f_1(c;u) = (c+1)^3 + (u-1)
\]
if $f_1(c/d;u)\ne 0$.
Substituting (\ref{b}) into $(A M A^\top)_{2,3}$, we have
\begin{align}
\label{AMA23}
0= (A M A^\top)_{2,3}=
\frac{a \{(v-1)c^4 + (u-1)d^4 + (c+d)^4\} d^6 f(c/d;u,v) }
{\{ d^3 f_1(c/d;u) \}^3},
\end{align}
where
\begin{align*}
 f(c;u,v)=&
  c^6 (-2 + 3 v - v^2)
+ c^5 (-6 + 7 v - u v^2)
+ c^4 (-10 + 15 v - 5 u v) \\ &
+ c^3 (-10 u + 10 v)
+ c^2 (10 - 15 u + 5 u v)
+ c (6 - 7 u + u^2 v) \\ &
+ (2 - 3 u + u^2).
\end{align*}
Substituting (\ref{b}) into $a d-b c$, we have
\[
 0 \ne ad-bc =
 a \frac{d^4 f_2(c/d;u,v)}{d^3 f_1(c/d;u)}, \quad
 f_2(c;u,v) = 4 c + 6 c^2 + 4 c^3 + u + c^4 v.
\]
In the numerator of (\ref{AMA23}), if $(v-1)c^4 + (u-1)d^4 + (c+d)^4=0$, then $c=d=0$, and hence, $ad-bc=0$.

Now, we examine whether for all $u,v>1$, there is a real $x$ such that
\begin{equation}
\label{fff}
 f(x;u,v)=0,
 \quad f_1(x;u)\ne 0, \quad f_2(x;u)\ne 0.
\end{equation}
Once a solution $x=x^*$ is obtained, we have a solution that $a,d\ne 0$ are given arbitrarily, $c=d x^*$, and $b$ is determined from $(a,c,d)$ in (\ref{b}).

Write $f(\cdot)=f(\cdot;u,v)$ shortly.
It is easily shown that when $u,v>1$, neither
\begin{align*}
f(0)  =& (u-2)(u-1)>0, \\
f(-1) =& -(u-v)(u-1)(v-1)>0, \\
f(\pm\infty) =& -(v-2)(v-1)>0
\end{align*}
nor
\[
f(0)<0,\ f(-1)<0,\ f(\pm\infty)<0
\]
is true, where $f(\pm\infty)=\lim_{c\to\pm\infty}c^{-6}f(c)$.
This means that $f(x;u,v)=0$ has a real solution $x=x^*$.

In order to check $f_1(x^*;u)\ne 0$ and $f_2(x^*;u,v)\ne 0$,
we need to check whether $f$ and $f_1$ (or $f_2$) have a common factor.
For this purpose, we calculate the resultants $R(f,f_1)$ and $R(f,f_2)$:
\[
 R(f,f_1) = (u-2)(u-1) h(u,v)^2, \quad
 R(f,f_2) = h(u,v),
\]
where
\begin{equation}
\label{h}
h(u,v) = -28 + 54 u - 27 u^2 + 54 v
 - 105 u v + 54 u^2 v - 27 v^2 + 54 u v^2 - 30 u^2 v^2 + u^3 v^3.
\end{equation}
(For resultant, see, e.g., \cite{Prasolov04}.
Applications in statistics can be found in \cite{Drton-etal09}.)
As shown in Lemma \ref{lem:h} later,
in the region $u,v>1$, $h(u,n)=0$ iff $u=2$.
Therefore, when $u\ne 2$, we have established that $x=x^*$ satisfying (\ref{fff}) exists, and hence, a solution $(a,b,c,d)$ satisfying (\ref{AMAAMA}) exists.

When $u=2$, (\ref{AMA23}) is reduced to
\[
 0 = \frac{f(c;2,v)}{f_1(c;2)^3} = \frac{(2-v) c (-2 -4 c -3 c^2 -c^3 -c^4 +c^4 v)}{(2+c)^2(1+c+c^2)^3}
\]
and $c=0$ is a solution.
From (\ref{b}), $b=-a/2$, and $a\ne 0$, $d\ne 0$ are arbitrarily given.
In fact, for $A=A(a,-a/2,0,d)$,
\[
 A \begin{pmatrix} 2 & 1 & 1 \\ 1 & 1 & 1 \\ 1 & 1 & v \end{pmatrix} A^\top =
 \begin{pmatrix}
   2 d^4 & 0 & a^2 d^2/2 \\ 0 & a^2 d^2/2 & 0 \\ a^2 d^2/2 & 0 & a^4 (v-7/8)
 \end{pmatrix}.
\]
\end{proof}

\begin{lem}
\label{lem:h}
Let $h(u,v)$ be defined in (\ref{h}).
When $u,v>1$, $h(u,v)\ge 0$, and the equality holds iff $u=v=2$.
\end{lem}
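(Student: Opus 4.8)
The plan is to exploit the symmetry $h(u,v)=h(v,u)$ together with a shift that moves the candidate extremum, and then to reduce everything to a single application of the arithmetic--geometric mean inequality. As a first orientation I would check the boundary identity $h(1,v)=(v-1)^3$ (and, by symmetry, $h(u,1)=(u-1)^3$): substituting $u=1$ into (\ref{h}) collapses the coefficients of the various powers of $v$ to $v^3-3v^2+3v-1$. This already shows $h>0$ on the two edges $\{u=1,\,v>1\}$ and $\{v=1,\,u>1\}$, and it strongly suggests passing to the shifted variables $a=u-1$ and $b=v-1$, so that the region $u,v>1$ becomes the open quadrant $a,b>0$ and the claimed equality point $(u,v)=(2,2)$ becomes $(a,b)=(1,1)$.

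The central step is to re-express $h$ through the elementary symmetric functions $\sigma=a+b$ and $\pi=ab$. Since $h$ is symmetric, I would first rewrite (\ref{h}) in terms of $p=u+v$ and $q=uv$, obtaining a polynomial of the form $q^3-30q^2+54pq-51q-27p^2+54p-28$, and then substitute $p=2+\sigma$, $q=1+\sigma+\pi$. I expect the numerous low-order terms to cancel and the whole expression to collapse to the clean form
\[
 h=(\sigma+\pi)^3-27\pi^2=(a+b+ab)^3-27(ab)^2 .
\]
Verifying this identity is the one genuinely computational part of the argument, but it is routine bookkeeping: the constant, linear, and quadratic parts in $(\sigma,\pi)$ must all vanish, leaving only the cube $(\sigma+\pi)^3$ and the lone term $-27\pi^2$.

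Granted this identity, the inequality is immediate from AM--GM applied to the three nonnegative numbers $a$, $b$, and $ab$:
\[
 \frac{a+b+ab}{3}\ge\sqrt[3]{a\cdot b\cdot ab}=\sqrt[3]{a^2b^2},
\]
and cubing yields $(a+b+ab)^3\ge 27a^2b^2=27(ab)^2$, that is $h\ge 0$. For the equality case, AM--GM is tight exactly when the three quantities coincide, $a=b=ab$; for $a,b>0$ this forces $a=b=1$, i.e.\ $u=v=2$. The extraneous solution $a=b=0$ corresponds to the corner $(1,1)$, which lies outside the open region $u,v>1$, so it does not violate the stated equality condition. No separate coercivity or boundary minimization is then needed.

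The main obstacle is simply arriving at the factored expression $h=(a+b+ab)^3-27(ab)^2$: the statement gives no hint that the degree-six polynomial (\ref{h}) carries such structure, so the difficulty lies in guessing the right coordinates (with the boundary identity $h(1,v)=(v-1)^3$ as the decisive clue) and then confirming the algebraic collapse. Once the closed form is in hand, the remainder is a one-line AM--GM argument with a transparent equality analysis.
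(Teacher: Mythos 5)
Your proof is correct, and it takes a genuinely different route from the paper's. The identity you conjecture does hold exactly: writing $a=u-1$, $b=v-1$, direct expansion confirms
\[
 h(u,v) \;=\; (uv-1)^3 \;-\; 27\,(u-1)^2(v-1)^2 \;=\; (a+b+ab)^3 - 27\,(ab)^2,
\]
and your intermediate symmetric form $q^3-30q^2+54pq-51q-27p^2+54p-28$ with $p=u+v$, $q=uv$ is also correct. From there, AM--GM applied to the three positive numbers $a$, $b$, $ab$ gives $(a+b+ab)^3\ge 27(ab)^2$, i.e.\ $h\ge 0$, with equality iff $a=b=ab$, which for $a,b>0$ forces $a=b=1$, i.e.\ $(u,v)=(2,2)$ --- exactly as you argue. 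The paper proceeds quite differently, by calculus and case analysis: it fixes $u$, computes $h_v=\partial h/\partial v$, notes that $h_v=0$ has no real solution unless $3/2\le u\le 3$ (so that $h_v>0$ together with $h(u,1)=(u-1)^3>0$ settles those cases), and for $3/2<u<3$ evaluates $h$ at the larger critical point $v_*(u)$, showing via a computation involving nested radicals that this local minimum value is nonnegative and vanishes only at $u=2$. Your argument buys a substantially shorter, purely algebraic proof with a transparent equality analysis; its only cost is the need to guess the factorization (your use of the boundary identity $h(u,1)=(u-1)^3$ as the clue is well motivated). The paper's argument is computationally heavier but more mechanical, in that it requires no structural insight into $h$.
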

\begin{proof}
Fix $u$ and consider $h(u,v)$ as a function of $v$.
Note that $h(u,1)=(u-1)^3>0$.
\begin{align*}
h_v(u,v)
&= \frac{\partial}{\partial v}h(u,v) \\
&= 54 - 105 u + 54 u^2 + (-54 + 108 u - 60 u^2) v + 3 u^3 v^2.
\end{align*}
$h_v(u,1)=3u(u-1)^2>0$.
It is easy to see that $h_v(u,v)=0$ has a real solution iff $3/2\le u\le 3$.
Therefore, when $u\le 3/2$ or $u\ge 3$, $h_v(u,v)$ is always positive.
Combined with $h(u,1)>0$, this means that $h(u,v)>0$.

Consider the case $3/2<u<3$. The largest zero of $h_v(u,v)$ is
\[
 v_*(u) = \frac{9 - 18 u + 10 u^2 + 3 \sqrt{(u-1)^3(3-u)(2u-3)}}{u^3} > 1.
\]
At this point, the function $h(u,v)$ takes a local minimum
\begin{align*}
 h(u,v_*(u)) = & \frac{27}{u^6} (u-1)^4 \bigl\{(-54 +108 u -72 u^2 +18 u^3 -u^4) \\
& -2(3-u)(2u-3)\sqrt{(u-1)(3-u)(2u-3)} \bigr\} \\
=& 27 (u-1)^4 (u-2)^2 \bigl\{(-54 +108 u -72 u^2 +18 u^3 -u^4) \\
& +2(3-u)(2u-3)\sqrt{(u-1)(3-u)(2u-3)} \bigr\}^{-1}.
\end{align*}
It is easy to check that $-54 +108 u -72 u^2 +18 u^3 -u^4>0$ for $3/2<u<3$.
Hence, $h(u,v_*(u))\ge 0$ for $3/2<u<3$ and the equality holds when $u=2$.
When $u=2$, $v_*(2)=2$.
\end{proof}

\begin{proof}[Proof of Lemma \ref{lem:v3}]
For $A=A(a,b,c,d)$ with $b=c=0$,
\[
A \begin{pmatrix} \tilde u & 0 & \tilde v \\ 0 & \tilde v & 0 \\ \tilde v & 0 & \tilde w \end{pmatrix} A^\top =
 \begin{pmatrix} d^4 \tilde u & 0 & a^2 d^2 \tilde v \\ 0 & a^2 d^2 \tilde v & 0 \\ a^2 d^2 \tilde v & 0 & a^4 \tilde w \end{pmatrix}.
\]
By letting $a=\tilde w^{-1/4}$, $d=\tilde u^{-1/4}$,
$v=a^2 d^2 \tilde v=\tilde v/\sqrt{\tilde u \tilde w}$, we obtain the result.
\end{proof}

\begin{rem}
In the proof of Lemma \ref{lem:v3}, by letting
$d=\tilde u^{-1/4}$,
$a=\tilde v^{-1/2}/\sqrt{3}d=\tilde u^{1/4}/\sqrt{3}\tilde v^{1/2}$,
$w=a^4\tilde w= \tilde u \tilde w/9\tilde v^2$,
we have another representative group element
\[
 \begin{pmatrix} 1 & 0 & \frac{1}{3} \\ 0 & \frac{1}{3} & 0 \\ \frac{1}{3} & 0 & w \end{pmatrix}, \quad w>\frac{1}{9}.
\]
\end{rem}

\subsection{Proof of Lemma \ref{lem:vv'}}
\label{subsec:vv'}

\begin{proof}
The $(1,1)$, $(2,1)$, $(3,2)$, $(3,3)$ components of the equation $M_{v'}=A M_v A^\top$ are
\begin{align*}
(A M_v A^\top)_{1,1} &= c^4 + d^4 + 6 v c^2 d^2, \\
(A M_v A^\top)_{2,1} &= a c^3 + b d^3 + 3 v c d (b c + a d), \\
(A M_v A^\top)_{3,2} &= a^3 c + b^3 d + 3 v a b (b c + a d), \\
(A M_v A^\top)_{3,3} &= a^4 + b^4 + 6 v a^2 b^2,
\end{align*}
respectively.
By solving $(A M_v A^\top)_{2,1}=0$, we have
\[
 b = -\frac{a c (c^2 + 3 d^2 v)}{d (d^2 + 3 c^2 v)}
\]
when $d\ne 0$. ($c$ and $d$ cannot be 0 simultaneously because $ad-bc\ne 0$.)
Suppose first that $d\ne 0$.
Substituting this into $(A M_v A^\top)_{3,2}$, we have
\[
 (A M_v A^\top)_{3,2} = \frac{a^3 c (c^4 - d^4) (c^4 + d^4 + 6 c^2 d^2 v) (-1 +
     9 v^2)}{d^2 (d^2 + 3 c^2 v)^3}.
\]
This becomes 0 when (i) $c=0$, (ii) $c=\pm d$, or (iii) $v=1/3$.
We try to solve the equation $(A M_v A^\top)_{1,1}=(A M_v A^\top)_{3,3}=1$
in each case.
Note that $a=0$ cannot be a solution, because $b$ becomes 0 and $ad-bc=0$.

(i) If $c=0$, then $b$ becomes 0 and $(A M_v A^\top)_{1,1}=d^4$, $(A M_v A^\top)_{3,3}=a^4$.
Hence, $(a,b,c,d)=(\pm 1,0,0,\pm 1)$ (four ways) are the solutions.
In each case,
\[
 v' = (A M_v A^\top)_{3,1} = v.
\]

(ii) If $c=\pm d$, then $b=\mp a$, $(A M_v A^\top)_{1,1}=2c^4(1+3v)=1=(A M_v A^\top)_{3,3}=2a^4(1+3v)$, and hence, $c=\pm a$.
Therefore,
\[
 (a,b,c,d)= 1/(2 + 6 v)^{1/4}\times
 \left\{ \begin{array}{l} (1,-1,1,1), \\ (1,-1,-1,-1), \\ (-1,1,1,1), \\ (-1,1,-1,-1) \end{array}\right.
\]
are solutions.
In each case,
\[
 v' = (A M_v A^\top)_{3,1} = \frac{1 - v}{1 + 3 v}.
\]

(iii) When $v=1/3$ and $d\ne 0$, we have $b=-ac/d$ and
\[
 (A M_v A^\top)_{1,1} = (c^2 + d^2)^2 = 1, \quad
 (A M_v A^\top)_{3,3} = (a^2 + b^2)^2 = \frac{a^4(c^2 + d^2)}{d^4} = \frac{a^4}{d^4} = 1.
\]
Hence, $b=\pm\sqrt{1-a^2}$.
When $d=a$, $c=-b=\mp\sqrt{1-a^2}$.
When $d=-a$, $c=b=\pm\sqrt{1-a^2}$.
In summary,
\begin{align*}
 (a,b,c,d)=
& \bigl(a,\sqrt{1 - a^2},-\sqrt{1 - a^2},a\bigr), \quad
  \bigl(a,-\sqrt{1 - a^2},\sqrt{1 - a^2},a\bigr), \\
& \bigl(a,\sqrt{1 - a^2},\sqrt{1 - a^2},-a\bigr), \quad
  \bigl(a,-\sqrt{1 - a^2},-\sqrt{1 - a^2},-a\bigr).
\end{align*}
In each case,
\[
 v' = (A M_v A^\top)_{3,1} = \frac{1}{3} = v.
\]

(iii') When $v=1/3$ and $d=0$, $(A M_v A^\top)_{2,1}=a c^3=0$. Because $ad-bc\ne 0$, $c\ne 0$, and $a=0$.
In this case, $(A M_v A^\top)_{3,2}=0$,
$(A M_v A^\top)_{1,1}=c^4=1$, $(A M_v A^\top)_{3,3}=b^4=1$.
Hence, $(a,b,c,d)=(0,\pm 1,\pm 1,0)$ (four ways) are the solutions.
In each case,
\[
 v' = (A M_v A^\top)_{3,1} = \frac{1}{3} = v.
\]
\end{proof}

\subsection*{Acknowledgments}
The authors are grateful to Yasuhiro Omori and Tatsuya Kubokawa for their helpful comments.
This work was supported by JSPS KAKENHI Grant Number 16H02792.


\begin{thebibliography}{99}

\bibitem[Adler and Taylor(2007)]{Adler-Taylor07}
Adler, R.\,J.\ and Taylor, J.\,E.\ (2007).
{\it Random Fields and Geometry\/}, Springer.

\bibitem[Dette, et al.(1999)]{Dette-etal99}
Dette, H., Haines, L.\,M.\ and Imhof, L.\ (1999).
Optimal designs for rational models and weighted polynomial regression,
{\it The Annals of Statistics\/}, {\bf 27} (4), 1272--1293.

\bibitem[Dette and Melas(2003)]{Dette-Melas03}
Dette, H.\ and Melas, V.\,B.\ (2003).
Optimal designs for estimating individual coefficients in Fourier regression models,
{\it The Annals of Statistics\/}, {\bf 31} (5), 1669--1692.

\bibitem[Dette and Schorning(2016)]{Dette-Schorning16}
Dette, H.\ and Schorning, K.\ (2016).
Optimal designs for comparing curves,
{\it The Annals of Statistics\/}, {\bf 44} (3), 1103--1130. 

\bibitem[Dette, et al.(2017)]{Dette-etal17}
Dette, H., Schorning, K.\ and Konstantinou, M.\ (2017).
Optimal designs for comparing regression models with correlated observations,
{\it Computational Statistics \& Data Analysis\/}, {\bf 113}, 273--286. 

\bibitem[Drton, et al.(2009)]{Drton-etal09}
Drton, M., Sturmfels, B.\ and Sullivant, S.\ (2009).
{\it Lectures on Algebraic Statistics\/}, Oberwolfach Seminars 39, Springer.

\bibitem[van Dyk(2014)]{vanDyk14}
van Dyk, D.\,A.\ (2014).
The role of statistics in the discovery of a Higgs boson,
{\it Annual Review of Statistics and Its Application\/}, {\bf 1}, 41--59.

\bibitem[Fang and Zhang(1990)]{Fang-Zhang90}
%Kai-Tai Fang, Yao-Ting Zhang
Fang, K.-T.\ and Zhang, Y.-T.\ (1990).
{\it Generalized Multivariate Analysis\/},
Science Press and Springer.

\bibitem[Gould(2010)]{Gould10}
Gould, H.\,W.\ (2010).
{\it Combinatorial Identities: Table I: Intermediate Techniques for Summing Finite Series:\/} From the seven unpublished manuscripts of H.\,W.\ Gould Edited and Compiled by Jocelyn Quaintance. \\
\url{http://www.math.wvu.edu/~gould/Vol.4.PDF}

\bibitem[Gross and Holman(1980)]{Gross-Holman80}
Gross, K.\,I.\ and Holman III, W.\,J.\ (1980).
Matrix-valued special functions and representation theory of the conformal group, I: The generalized gamma function,
{\it Transactions of the American Mathematical Society\/}, {\bf 258} (2), 319--350.

\bibitem[Guest(1958)]{Guest58}
Guest, P.\,G.\ (1958).
The spacing of observations in polynomial regression,
{\it The Annals of Mathematical Statistics\/}, {\bf 29} (1), 294--299.

\bibitem[Heinig and Rost(1989)]{Heinig-Rost89}
Heinig, G.\ and Rost, K.\ (1989).
Matrices with displacement structure, generalized Bezoutians, and M\"obius transformations,
{\it Operator Theory: Advances and Applications\/}, {\bf 40}, 203--230.

\bibitem[Heinig and Rost(2010)]{Heinig-Rost10}
Heinig, G.\ and Rost, K.\ (2010).
Introduction to Bezoutians,
{\it Operator Theory: Advances and Applications\/}, {\bf 199}, 25--118.

\bibitem[Johansen and Johnstone(1990)]{Johansen-Johnstone90}
Johansen, S.\ and Johnstone, I.\,M.\ (1990).
Hotelling's theorem on the volume of tubes: Some illustrations in simultaneous inference and data analysis,
{\it The Annals of Statistics\/}, {\bf 18} (2), 652--684.

\bibitem[Johnstone and Siegmund(1989)]{Johnstone-Siegmund89}
Johnstone, I.\ and Siegmund, D.\ (1989).
On Hotelling's formula for the volume of tubes and Naiman's inequality,
{\it The Annals of Statistics\/}, {\bf 17} (1), 184--194.

\bibitem[Karlin and Studden(1966)]{Karlin-Studden66}
Karlin, S.\ and Studden, W.\ (1966).
{\it Tchebycheff Systems: With Applications in Analysis and Statistics\/},
Interscience Publishers, Wiley.

\bibitem[Kato and Kuriki(2013)]{Kato-Kuriki13}
Kato, N.\ and Kuriki, S.\ (2013).
Likelihood ratio tests for positivity in polynomial regressions,
{\it Journal of Multivariate Analysis\/}, {\bf 115}, 334--346.

\bibitem[Kato and McCullagh(2014)]{Kato-McCullagh14}
Kato, S.\ and McCullagh, P.\ (2014).
A characterization of a Cauchy family on the complex space,
arXiv:1402.1905 [math.ST].

\bibitem[Kawakubo(1992)]{Kawakubo92}
Kawakubo, K.\ (1992).
{\it The Theory of Transformation Groups\/},
Oxford University Press.

\bibitem[Kiefer and Wolfowitz(1959)]{Kiefer-Wolfowitz59}
Kiefer, J.\ and Wolfowitz, J.\ (1959).
Optimum designs in regression problems,
{\it The Annals of Mathematical Statistics\/}, {\bf 30} (2), 271--294.

\bibitem[Kuriki and Takemura(2001)]{Kuriki-Takemura01}
Kuriki, S.\ and Takemura, A.\ (2001).
Tail probabilities of the maxima of multilinear forms and their applications,
{\it The Annals of Statistics\/}, {\bf 29} (2), 328--371.

\bibitem[Kuriki and Takemura(2009)]{Kuriki-Takemura09}
Kuriki, S.\ and Takemura, A.\ (2009).
Volume of tubes and the distribution of the maximum of a Gaussian random field,
Selected Papers on Probability and Statistics, American Mathematical Society Translations Series 2, Vol.\ 227, No. 2, 25--48.

\bibitem[Lasserre(2009)]{Lasserre09}
Lasserre, J.\,B.\ (2009).
{\it Moments, Positive Polynomials and Their Applications\/}, Imperial College Press.

\bibitem[Liu(2010)]{Liu10}
Liu, W.\ (2010).
{\it Simultaneous Inference in Regression\/}, Chapman \& Hall/CRC.

\bibitem[Lu and Kuriki(2017)]{Lu-Kuriki17}
Lu, X.\ and Kuriki, S.\ (2017).
Simultaneous confidence bands for contrasts between several nonlinear regression curves,
{\it Journal of Multivariate Analysis\/}, {\bf 155}, 83--104.

\bibitem[Mackey, et al(2015)]{Mackey-etal15}
Mackey, D.\,S., Mackey, N., Mehl, C.\ and Mehrmann, V.\ (2015).
M\"obius transformations of matrix polynomials,
{\it Linear Algebra and Its Applications\/}, {\bf 470}, 120--184.

\bibitem[McCullagh(1996)]{McCullagh96}
McCullagh, P.\ (1996).
M\"obius transformation and Cauchy parameter estimation,
{\it The Annals of Statististics\/}, {\bf 24} (2), 787--808.

\bibitem[Moriguti, et al.(1957)]{Moriguti-etal57}
Moriguti, S., Udagawa, K.\ and Hitotumatu, S. (1957).
{\it Iwanami Sugaku Koshiki\/}, II (in Japanese), Iwanami Shoten.

\bibitem[Naiman(1986)]{Naiman86}
Naiman, D.\,Q.\ (1986).
Conservative confidence bands in curvilinear regression,
{\it The Annals of Statistics\/}, {\bf 14} (3), 896--906.

\bibitem[Prasolov(2004)]{Prasolov04}
Prasolov, V.\,V.\ (2004).
{\it Polynomials\/},
Algorithms and Computation in Mathematics 11, Springer.

\bibitem[Pukelsheim(2006)]{Pukelsheim06}
Pukelsheim, F.\ (2006).
{\it Optimal Design of Experiments\/},
Classics in Applied Mathematics 50, SIAM.

\bibitem[Scheff\'{e}(1959)]{Scheffe59}
Scheff\'{e}, H.\ (1959).
{\it The Analysis of Variance\/}, Wiley.

\bibitem[Sun(1993)]{Sun93}
Sun, J.\ (1993).
Tail probabilities of the maxima of Gaussian random fields,
{\it The Annals of Probability\/}, {\bf 21} (1), 34--71.

\bibitem[Sun and Loader(1994)]{Sun-Loader94}
% Jiayang Sun and Clive R. Loader
Sun, J.\ and Loader, C.\,R.\ (1994).
Simultaneous confidence bands for linear regression and smoothing,
{\it The Annals of Probability\/}, {\bf 22} (3), 1328--1345.

\bibitem[Takemura and \allowbreak Kuriki(2002)]{Takemura-Kuriki02}
Takemura, A.\ and Kuriki, S.\ (2002).
On the equivalence of the tube and Euler characteristic methods for the distribution of the maximum of Gaussian fields over piecewise smooth domains,
{\it The Annals of Applied Probability\/}, {\bf 12} (2), 768--796.

\bibitem[Working and Hotelling(1929)]{Working-Hotelling29}
Working, H.\ and Hotelling, H.\ (1929).
Applications of the theory of error to the interpretation of trends,
{\it Journal of the American Statistical Association\/}, {\bf 24} (165), 73--85.

\bibitem[Wynn(1985)]{Wynn85}
Wynn, H.\,P.\ (1985).
Jack Kiefer's contributions to experimental design,
in
{\it Jack Carl Kiefer Collected Papers III: Design of Experiments\/},
L.\,D.\ Brown, O.\ Olkin, J.\ Sacks, H,\,P.\ Wynn eds,
Springer and the Institute of Mathematical Statistics, 1985, pages xvii--xxiii.

\bibitem[Wynn and Bloomfield(1971)]{Wynn-Bloomfield71}
Wynn, H.\,P.\ and Bloomfield, P.\ (1971).
Simultaneous confidence bands in regression analysis (with discussions),
{\it Journal of the Royal Statistical Society\/},
Ser.\ B, {\bf 33} (2), 202--221.
\end{thebibliography}
\end{document}